\renewcommand*{\backref}[1]{}
\renewcommand*{\backrefalt}[4]{%
  \ifcase #1 %
No citations.
  \or
(cit. on p. #2).%
  \else
(cit on pp. #2).%
  \fi%
}
\definecolor{dark-red}{rgb}{0.5,0.15,0.15}
\definecolor{dark-blue}{rgb}{0.15,0.15,0.6}
\definecolor{dark-green}{rgb}{0.15,0.6,0.15}
\newcommand{\iHom}{\underline{\operatorname{Hom}}}
\newcommand{\cX}{\cal{X}}
\newcommand{\cY}{\cal{Y}}
\newtheorem{thm}{Theorem}[section]
\newtheorem{cor}[thm]{Corollary}
\newtheorem{prop}[thm]{Proposition}
\newtheorem{lem}[thm]{Lemma}
\newtheorem{quest}[thm]{Question}
\newtheorem{thmx}{Theorem}
\theoremstyle{definition}
\newtheorem{defn}[thm]{Definition}
\newtheorem{ex}[thm]{Example}
\theoremstyle{remark}
\newtheorem{rem}[thm]{Remark}
\newtheorem*{thm*}{Theorem}
\let\c@equation\c@thm
\numberwithin{equation}{section}
\DeclareMathOperator{\Infl}{Infl}
\DeclareMathOperator{\Sp}{Sp}
\DeclareMathOperator{\Hom}{Hom}
\DeclareMathOperator{\End}{End}
\DeclareMathOperator{\cA}{\mathcal{A}}
\DeclareMathOperator{\cC}{\mathcal{C}}
\DeclareMathOperator{\cD}{\mathcal{D}}
\DeclareMathOperator{\rank}{rank}
\DeclareMathOperator{\cF}{\mathcal{F}}
\DeclareMathOperator{\Ext}{Ext}
\DeclareMathOperator{\Spec}{Spec}
\DeclareMathOperator{\Mod}{Mod}
\DeclareMathOperator{\Sub}{Sub}
\DeclareMathOperator{\Kos}{Kos}
\DeclareMathOperator{\GrAb}{GrAb}
\DeclareMathOperator{\Loc}{Loc}
\DeclareMathOperator{\Ind}{Ind}
\DeclareMathOperator{\res}{res}
\DeclareMathOperator{\Fun}{Fun}
\newcommand{\Q}{\mathbb{Q}}
\DeclareMathOperator{\Res}{Res}
\DeclareMathOperator{\Alg}{Alg}
\DeclareMathOperator{\CAlg}{CAlg}
\DeclareMathOperator{\unit}{\mathbbm{1}}
\DeclareMathOperator{\free}{free}
\newcommand{\cal}{\mathcal}
\newcommand{\xr}{\xrightarrow}
\newcommand{\Z}{\mathbb{Z}}
\Crefname{figure}{Figure}{Figures}
\Crefname{assu}{Assumption}{Assumptions}
\Crefname{prop}{Proposition}{Propositions}
\Crefname{lem}{Lemma}{Lemmas}
\Crefname{thm}{Theorem}{Theorems}
\Crefname{ex}{Example}{Examples}
\Crefname{cor}{Corollary}{Corollaries}
\newcommand{\recollement}[5]{
\xymatrix{{#1} \ar[r]|-{#2} & #3 \ar[r]|-{#4} \ar@<1ex>[l]^-{{#2}_!} \ar@<-1ex>[l]_-{{#2}^*} & #5, \ar@<1ex>[l]^-{{#4}!} \ar@<-1ex>[l]_-{{#4}^*}
}}
\let\lim\relax
\DeclareMathOperator{\lim}{lim}
\newcommand{\cU}{\mathcal{U}}
\DeclareMathOperator{\Map}{Map}
\title{Rational local systems and connected finite loop spaces}
\author{Drew Heard}
\address{Department of Mathematical Sciences, Norwegian University of Science and Technology, Trondheim}
\email{drew.k.heard@ntnu.no}
\DeclareMathOperator{\Sym}{Sym}
\date{\today}
\begin{document}

\begin{abstract}
Greenlees has conjectured that the rational stable equivariant homotopy category of a compact Lie group always has an algebraic model. Based on this idea, we show that the category of rational local systems on a connected finite loop space always has a simple algebraic model. When the loop space arises from a connected compact Lie group, this recovers a special case of a result of Pol and Williamson about rational cofree $G$-spectra. More generally, we show that if $K$ is a closed subgroup of a compact Lie group $G$ such that the Weyl group $W_GK$ is connected, then a certain category of rational $G$-spectra `at $K$' has an algebraic model. For example, when $K$ is the trivial group, this is just the category of rational cofree $G$-spectra, and this recovers the aforementioned result. Throughout, we pay careful attention to the role of torsion and complete categories.
\end{abstract}
\setcounter{tocdepth}{1}
\maketitle
\tableofcontents

\section{Introduction}
The category of non-equivariant rational spectra is very simple; it is equivalent to the derived category of $\mathbb{Q}$-modules. Greenlees has conjectured that for a compact Lie group $G$, the category of rational equivariant $G$-spectra is equivalent to the derived category of an abelian category $\cal{A}(G)$ \cite[Conjecture 6.1]{greenlees_conjecture}.  For example, when $G$ is a finite group, the conjecture holds, and is relatively elementary to prove \cite[Appendix A]{GreenleesMay1995Generalized}. The conjecture has also been proved in various other cases including (but not limited to) tori \cite{GreenleesShipley2018algebraic}, $O(2)$ \cite{Barnes2017Rational}, and $SO(3)$ \cite{Kolhkedziorek2017algebraic}. In these cases, we say that the category of rational $G$-equivariant spectra has an algebraic model. One can additionally ask for more structure to be preserved, for example one can ask for an equivalence of symmetric monoidal categories.

Inside the category of $G$-spectra sit the category of free and cofree (or Borel complete) $G$-spectra. The category of free $G$-spectra consists of those $G$-spectra that can be constructed from free cells $\Sigma^{\infty}_+G$. More specifically, it can be constructed as the localizing subcategory inside $G$-spectra generated by $\Sigma^{\infty}_+G$. Equivalently, these are the $G$-spectra for which $EG_+ \otimes X \to X$ is an equivalence, where $EG_+$ is the suspension spectra of the universal free $G$-space (see \Cref{sec:torsion_spectra}). The category of cofree $G$-spectra is the Bousfield localization of $\Sp_G$ at $\Sigma^{\infty}_+G$, or equivalently the $G$-spectra for which $X \to F(EG_+,X)$ is an equivalence. Similarly, we can construct the categories of free and cofree rational $G$-spectra, which we denote by $\Sp_{G,\Q}^{\mathrm{free}}$ and $\Sp_{G,\Q}^{\mathrm{cofree}}$, respectively. In fact, these categories are equivalent, although not by the identity functor. These categories fit into a general construction of torsion and complete categories, see \Cref{sec:torsion_completion}.

It is reasonable to conjecture that there is an algebraic model for these categories, and this is indeed the case \cite{GreenleesShipley2011algebraic,GreenleesShipley2014algebraic,pol_williamson}. We state the result for a connected compact Lie group, however we note that the cited results consider more generally arbitrary compact Lie groups.
\begin{thm}(Greenlees--Shipley, Pol--Williamson)\label{thm:rational_models}
  Let $G$ be a connected compact Lie group and $I$ be the augmentation ideal of $H^*(BG)$. Then there are Quillen equivalences
  \[
\Sp_{G,\Q}^{\mathrm{free}}\simeq  \Mod_{H^*(BG),\mathrm{inj}}^{I-\mathrm{tors}} \quad \text{ and } \quad \Sp_{G,\Q}^{\mathrm{cofree}}\simeq_{\otimes}  \Mod_{H^*(BG),\mathrm{proj}}^{I-\mathrm{comp}}
  \]
 \end{thm}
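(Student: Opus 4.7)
The plan is to proceed in three stages: (i) reduce each spectrum-level category to modules over an endomorphism ring spectrum via a Morita-type argument; (ii) use formality to replace that ring spectrum with the graded polynomial algebra $H^*(BG;\Q)$; and (iii) identify the torsion and complete subcategories on the module side with the free and cofree subcategories on the spectrum side, respectively. Connectedness of $G$ enters crucially in step (ii), since it forces $H^*(BG;\Q)$ to be concentrated in even degrees and polynomial (by Borel's theorem), hence intrinsically formal as an $E_\infty$-algebra over $\Q$.

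For the free equivalence, I would first observe that $\Sigma^{\infty}_+ G$ is a compact generator of $\Sp_{G,\Q}^{\mathrm{free}}$, with endomorphism ring spectrum identified by standard calculation as $F(EG_+, S_\Q)^G \simeq C^*(BG;\Q)$. The Schwede--Shipley theorem, combined with the formality statement above, then yields a Quillen equivalence between $\Sp_{G,\Q}^{\mathrm{free}}$ and differential graded modules over $H^*(BG;\Q)$. Under this correspondence $\Sigma^{\infty}_+ G$ matches the residue field $\Q$, so the localizing subcategory it generates on the algebraic side is precisely the $I$-torsion modules; the injective model structure is the natural choice for the torsion side.

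For the cofree equivalence, one exploits that cofree $G$-spectra form the Bousfield localization of $\Sp_{G,\Q}$ at $EG_+$, and that free and cofree spectra are related by the comparison $X \mapsto F(EG_+, X)$. On the module side this corresponds, via local duality, to the passage from $I$-torsion to $I$-complete modules, with the projective model structure now being the appropriate choice. The symmetric monoidal refinement $\simeq_{\otimes}$ then demands that the smash product of cofree spectra correspond to the derived completed tensor product of complete modules, which again can be arranged using local duality once everything has been set up symmetric-monoidally.

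The main obstacle is precisely this symmetric monoidal upgrade in the cofree case. It requires strengthening the formality result of step (ii) from an $A_\infty$- to an $E_\infty$-statement, and then checking compatibility with derived completion. The latter is subtle because derived $I$-adic completion does not coincide with classical completion outside the finitely generated setting, and the symmetric monoidal structure on the complete side is not inherited verbatim from that on the torsion side. Carefully navigating these issues will require the general framework of torsion and complete categories, which I anticipate is developed in the remainder of the paper and then applied in this particular geometric setting.
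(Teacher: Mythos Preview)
Your overall strategy is sound and close to the original Greenlees--Shipley approach, but step (i) contains a concrete error: the endomorphism ring of $\Sigma^{\infty}_+G$ in $\Sp_{G,\Q}^{\mathrm{free}}$ is $C_*(G;\Q)$, not $C^*(BG;\Q)$. Since $\Sigma^{\infty}_+G \otimes H\Q \simeq \Ind_{\{e\}}^G H\Q$, the induction--restriction adjunction gives $\End(\Sigma^{\infty}_+G \otimes H\Q) \simeq \Sigma^{\infty}_+G \otimes H\Q \simeq C_*(G;\Q)$ with its Pontryagin product. The spectrum $F(EG_+,S_\Q)^G \simeq C^*(BG;\Q)$ you write down is instead the endomorphism ring of the \emph{unit} in the cofree category. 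Passing from $C_*(G;\Q)$-modules to $C^*(BG;\Q)$-modules requires an additional Koszul duality step, and it is precisely there that $I$-torsion enters: the localizing subcategory of $\Mod_{C^*(BG;\Q)}$ generated by $\Q$ is the $I$-torsion modules. Your slogan ``$\Sigma^{\infty}_+G$ matches the residue field $\Q$'' is thus right in spirit but elides this passage.

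The paper itself does not give a self-contained proof of this theorem---it is quoted from the literature---but it re-derives it as the $K=\{e\}$ case of \Cref{cor:pol_williamson,cor:free_local}. The route there differs from yours: rather than Morita theory on $G_+$, the paper first identifies $\Sp_{G,\Q}^{\mathrm{cofree}}$ with $\Fun(BG,\Mod_{H\Q})$ (\Cref{prop:cofree_global}) and then applies the unipotence criterion of Mathew--Naumann--Noel, which realizes this category directly as a Bousfield localization of $\Mod_{C^*(BG;\Q)}$. Here $C^*(BG;\Q)$ arises as $\End(\unit)$, so no Koszul duality step is needed. Your concerns about the $E_\infty$ upgrade of formality and its compatibility with derived completion are exactly the right ones; the paper handles the former via \Cref{lem:free_rational} (the free $E_\infty$-$\Q$-algebra on even-degree generators is the polynomial Eilenberg--MacLane spectrum) and the latter via the local-duality framework of \Cref{sec:torsion_completion}.
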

 Here the categories $\Mod_{H^*(BG),\mathrm{inj}}^{I-\mathrm{tors}}$ and $\Mod_{H^*(BG),\mathrm{proj}}^{I-\mathrm{comp}}$ are the categories of $I$-torsion dg-$H^*(BG)$-modules and $L_0^I$-complete dg-$H^*(BG)$-modules respectively, equipped with an injective and projective module category structure, respectively (see \Cref{sec:algebraic_torsion}). Moreover, the second equivalence is even shown to be symmetric monoidal.\footnote{Throughout, we indicate such an equivalence by the symbol $\simeq_{\otimes}$.}

In fact, Greenlees and Shipley have given two proofs for the equivalence between free $G$-spectra and torsion $H^*(BG)$-modules when $G$ is a \emph{connected} compact Lie group. The first \cite{GreenleesShipley2011algebraic} passes from equivariant homotopy to algebra almost immediately, while the second \cite{GreenleesShipley2014algebraic} (which also deals with the non-connected case) stays in the equivariant world as long as possible. As noted by the authors, staying in the equivariant worlds seems to help the extension to the non-connected case. In the cofree case, the authors also stay in the equivariant world as long as possible. Our approach is to move away from equivariant homotopy immediately, and as such is closer in spirit to the original proof of Greenlees and Shipley. Indeed, we begin with the observation that there is a symmetric monoidal equivalence of $\infty$-categories
\begin{equation}\label{eq:borel}
\Sp_{G,\Q}^{\mathrm{cofree}} \simeq_{\otimes} \Fun(BG,\Mod_{H\Q}),
\end{equation}
see \Cref{prop:cofree_global}, where $\Fun(-,-)$ denotes the $\infty$-category of functors and $BG$ is considered as an $\infty$-groupoid. We call this the $\infty$-category of rational local systems on $BG$.

An advantage of moving away from equivariant homotopy is that one can work more generally. For a space $Y$ (again thought of as an $\infty$-groupoid) we let $\Loc_{H\Q}(Y) = \Fun(Y,\Mod_{H\Q})$ denote the $\infty$-category of rational local systems on $Y$.
\begin{quest}
  For which spaces $Y$ does the $\infty$-category $\Loc_{H\Q}(Y)$ have an algebraic model?
\end{quest}
The above results show that this is true whenever $Y = BG$ for a compact Lie group $G$. A connected compact Lie group is a particular example of a connected finite loop space. Our first main result is the following.
\begin{thmx}[\Cref{thm:main}]  Let $X$ be a connected finite loop space, then there is an equivalence of symmetric monoidal $\infty$-categories
  \[
\Loc_{H\Q}(BX) \simeq_{\otimes } \cal{D}(\Mod^{I-\mathrm{comp}}_{H^*(BX)}).
  \]
\end{thmx}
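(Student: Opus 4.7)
The plan is to pass from topology to algebra in two stages: identify $\Loc_{H\Q}(BX)$ with complete modules over the cochain $E_\infty$-algebra $R := C^*(BX;\Q)$, and then invoke formality to replace $R$ by the discrete graded ring $H^*(BX;\Q)$.

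Since $X$ is a connected finite loop space, $BX$ is simply connected, and Hopf's theorem combined with transgression in the path-loop fibration $X \to * \to BX$ forces $H^*(BX;\Q) \cong \Q[y_1,\ldots,y_n]$, a polynomial algebra concentrated in strictly positive even degrees. Because the cohomology of $R$ is supported in even degrees, graded commutativity kills all possible Massey products and higher $E_\infty$-operations for parity reasons, so $R$ is intrinsically formal: $R \simeq H^*(BX;\Q)$ as $E_\infty$-$\Q$-algebras. Transporting modules along this equivalence identifies $\Mod^{I-\mathrm{comp}}_R$ with $\cal{D}(\Mod^{I-\mathrm{comp}}_{H^*(BX)})$, using that the latter (as developed in \Cref{sec:algebraic_torsion}) presents the $\infty$-category of $I$-complete graded $H^*(BX)$-modules, with $I$ the augmentation ideal.

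The main content is then to establish the symmetric monoidal equivalence
\[
\Gamma := \Map(\unit, -) \colon \Loc_{H\Q}(BX) \simeq_\otimes \Mod^{I-\mathrm{comp}}_R,
\]
where $\unit$ is the constant local system, which is the tensor unit of $\Loc_{H\Q}(BX) = \Fun(BX,\Mod_{H\Q})$ and has $E_\infty$-endomorphism algebra precisely $R$. The functor $\Gamma$ is naturally symmetric monoidal, so the content is the equivalence itself.

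The main obstacle is this last identification. Since $\unit$ is a cogenerator rather than a compact generator of $\Loc_{H\Q}(BX)$, classical Schwede–Shipley Morita theory does not apply directly; one must argue via a Koszul-duality-style completion. Concretely, the plan is to verify that (i) $\Gamma M$ is automatically $I$-complete for every local system $M$, using that $BX$ is simply connected and of rational finite type (hence $R$ is augmented connective in the appropriate sense and the constant local system is detected by its $R$-module of cochains only after $I$-completion), and (ii) every $I$-complete $R$-module arises, via a left adjoint built from the augmentation, as $\Gamma$ of a local system. This generalizes the compact Lie group case, already known from \Cref{thm:rational_models} combined with \eqref{eq:borel}, in which $\Loc_{H\Q}(BG) \simeq_\otimes \Sp^{\mathrm{cofree}}_{G,\Q} \simeq_\otimes \Mod^{I-\mathrm{comp}}_{H^*(BG)}$; here the input from Lie theory is replaced by the formality argument above, which uses only the Hopf-algebraic structure of $H^*(BX;\Q)$.
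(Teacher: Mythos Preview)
Your overall architecture matches the paper's: first identify $\Loc_{H\Q}(BX)$ with $I$-complete modules over $R=C^*(BX;\Q)$, then use formality to pass to $H^*(BX;\Q)$. But the execution has two genuine gaps.

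\textbf{Formality.} The claim that ``parity reasons'' force formality of $R$ as an $E_\infty$-$\Q$-algebra is not an argument. Even-degree cohomology does not by itself kill higher $E_\infty$-operations; what makes this work over $\Q$ is the specific fact that the free $E_\infty$-$\Q$-algebra on even-degree classes is already the polynomial Eilenberg--MacLane spectrum (because the rational homology of symmetric groups vanishes). This is what the paper does in \Cref{lem:free_rational}: the universal property of the free algebra then produces a map $H\Q[y_1,\dots,y_n]\to R$ which is an equivalence on homotopy. Your sketch can be repaired along these lines, but ``parity'' is not the mechanism.

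\textbf{The equivalence $\Loc_{H\Q}(BX)\simeq \Mod_R^{I-\mathrm{comp}}$.} This is where the real content lies, and your proposal does not supply it. First, the assertion that $\Gamma=\Hom(\unit,-)$ is ``naturally symmetric monoidal'' is false: it is only \emph{lax} symmetric monoidal, and upgrading this lax structure to a strong one is precisely the issue. Second, your plan---show $\Gamma M$ is $I$-complete and that every $I$-complete module is hit---addresses neither full faithfulness of $\Gamma$ nor the monoidality, and you give no mechanism for either step beyond asserting they should hold. The paper handles this via the unipotence criterion of Mathew--Naumann--Noel (\Cref{prop:unipotence_criterion}): one checks concrete conditions on the algebra object $A=C^*(X;\Q)\in\Loc_{H\Q}(BX)$, and the decisive one (condition (4), multiplicativity of $\Hom(\unit,-)$ on $A$) is equivalent to the Eilenberg--Moore spectral sequence for $BX$ being relevant, which holds because $BX$ is simply connected (Dwyer). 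Poincar\'e duality for $H^*(X;\Q)$ supplies the compactness/dualizability hypotheses. Without something playing this role---an EMSS/Koszul-duality convergence input that forces the lax structure to be strong and the comparison functor to be fully faithful---your Step~1 is only a wish list.
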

Here $\cal{D}(\Mod^{I-\mathrm{comp}}_{H^*(BX)})$ is the symmetric monoidal stable $\infty$-category underlying the category $\Mod^{I-\mathrm{comp}}_{H^*(BX),\mathrm{proj}}$ of $L_0^I$-complete dg-$H^*(BX)$-modules, again equipped with the projective model structure. We note that there do indeed exist connected finite loop spaces not rationally equivalent to compact Lie groups \cite{AndersenBauerGrodalPedersen2004finite}. The key fact is that the rational cohomology of the classifying space of any connected finite loop space is a polynomial algebra
\[
H^*(BX;\Q) \cong \Q[y_1,\ldots,y_r]
\]   
where the generator $y_i$ is in (even) degree $2d_i$. In fact, the integers $\{ d_1,\ldots,d_r \}$ uniquely determine the rational homotopy type of the finite loop space $X$. This is the key computational result that we need, along with the fact that rational polynomial rings are formal.  Finally, we note that such an $X$ is always homotopy equivalent to a manifold by the main result of \cite{BauerKitchlooNotbohmPedersen2004Finite}. 

The proof proceeds through a series of equivalences of symmetric monoidal stable $\infty$-categories, as indicated below.
\[
\Loc_{H\Q}(BX) \stackrel{(\ref{prop:unipotence_loops})}{\simeq_{\otimes } } L_{H\Q}\Mod_{C^*(BX;\Q)}\stackrel{(\ref{prop:completions})}{\simeq_{\otimes } } \Mod_{C^*(BX;\Q)}^{I-\mathrm{comp}} \stackrel{(\ref{lem:completion_compare})}{\simeq_{\otimes } } \cD_{H^*(BX)}^{I-\mathrm{comp}}  \stackrel{(\ref{thm:completion_monoidal_equiv})}{\simeq_{\otimes } } \cD(\Mod_{H^*(BX)}^{I-\mathrm{comp}}).
\]
The first equivalence relies on the concept of a unipotent stable $\infty$-category introduced in \cite{MathewNaumannNoel2017Nilpotence}, and relies heavily on their work. As we explain in \Cref{rem:unipotence_cg}, one could also deduce the result using the compactly generated localization principle of Pol and Williamson \cite[Theorem 3.14]{pol_williamson}, or \Cref{prop:compact_gen_loc} in this paper.

In equivariant homotopy we work with a bit more generality than with just free and cofree $G$-spectra. For a closed subgroup $K$ of $G$, we define $\infty$-categories $\Sp_{G,\langle K \rangle}$ and $\Sp_G^{\langle K \rangle}$ of $G$-spectra `at $K$', as well as their rationalized versions. The terminology is used because a non-trivial $G$-spectrum $M \in \Sp_{G,\langle K \rangle}$ if and only if its geometric isotropy is exactly $K$, i.e., its geometric $K$-fixed points are non-trivial, and its geometric $H$-fixed points are trivial for all $H \ne K$. The rational categories $\Sp_{G,\langle K \rangle,\Q}$ also appear in the computation of the localizing tensor-ideals of $\Sp_{G,\Q}$ \cite{Greenlees2019Balmer}; these are precisely the minimal localizing tensor-ideals. Finally, we note that the categories $\Sp_{G}^{\langle K \rangle}$ appear naturally in the work of Ayala--Mazel-Gee--Rozenblyum \cite{1910.14602} and Balchin--Greenlees \cite{1903.02669}, see \Cref{rem:categories}.

Our second theorem is the following.
\begin{thmx}[\Cref{cor:pol_williamson,cor:free_local}]
  Let $G$ be a compact Lie group, and $K$ a closed subgroup such that the Weyl group $W_GK = N_GK/K$ is a connected compact Lie group, then there are equivalences of stable $\infty$-categories
  \[
\Sp_{G,\langle K \rangle, \Q} \simeq \cD(\Mod_{H^*(B(W_GK))}^{I-\mathrm{tors}}) \quad \text{ and } \quad \Sp_{G,\Q}^{\langle K \rangle} \simeq_{\otimes} \cD(\Mod_{H^*(B(W_GK))}^{I-\mathrm{comp}}).
  \]
\end{thmx}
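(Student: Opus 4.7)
The plan is to reduce both equivalences to Theorem A by replacing the ambient group $G$ with the Weyl group $W_GK$, which is a connected compact Lie group by hypothesis. The central step is to establish an equivalence of symmetric monoidal $\infty$-categories
\[
\Sp_{G,\Q}^{\langle K \rangle} \simeq_{\otimes} \Sp_{W_GK,\Q}^{\mathrm{cofree}},
\]
and its free/torsion analogue $\Sp_{G,\langle K\rangle,\Q} \simeq \Sp_{W_GK,\Q}^{\mathrm{free}}$. Morally this is isotropy separation at the conjugacy class of $K$: a rational $G$-spectrum whose geometric isotropy equals $\{K\}$ is determined by the $W_GK$-spectrum given by its geometric $K$-fixed points, and the decorations ``$\langle K\rangle$'' (resp.\ ``${}_{\langle K\rangle}$'') on the $G$-side match the cofree (resp.\ free) conditions on the $W_GK$-side. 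This reduction is in the spirit of the stratification and recollement picture used in \cite{1903.02669,1910.14602}.

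Granting this identification, the cofree case is immediate. Applying the symmetric monoidal equivalence (\ref{eq:borel}) to the connected compact Lie group $W_GK$ gives
\[
\Sp_{W_GK,\Q}^{\mathrm{cofree}} \simeq_{\otimes} \Loc_{H\Q}(B(W_GK)),
\]
and since every connected compact Lie group is a connected finite loop space, Theorem A applies to yield
\[
\Loc_{H\Q}(B(W_GK)) \simeq_{\otimes} \cD(\Mod^{I-\mathrm{comp}}_{H^*(B(W_GK))}).
\]
Concatenating the three equivalences proves the second claim.

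The first, torsion claim follows by the same route using the torsion half of the torsion--completion framework set up in \Cref{sec:torsion_completion}: the reduction gives $\Sp_{G,\langle K\rangle,\Q} \simeq \Sp_{W_GK,\Q}^{\mathrm{free}}$, and the free counterpart of (\ref{eq:borel}) combined with the torsion analogue of Theorem A identifies the right-hand side with $\cD(\Mod^{I-\mathrm{tors}}_{H^*(B(W_GK))})$. The absence of an $\otimes$-subscript reflects that the transported monoidal structure on torsion modules does not agree with the standard one, which is why only the complete statement can be upgraded to a symmetric monoidal equivalence.

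The main obstacle is the symmetric monoidal reduction to the Weyl group at the $\infty$-categorical level. When $K$ is not normal, the $W_GK$-action on geometric $K$-fixed points is only given up to coherent homotopy, so some care is required to package this into an equivalence of $\infty$-categories that preserves the tensor structure. The connectedness hypothesis on $W_GK$ enters at two distinct places: it is exactly what allows Theorem A to be applied in the final step, and it streamlines the bookkeeping of the Weyl group action by eliminating additional contributions from components of $W_GK$ that would otherwise need to be tracked.
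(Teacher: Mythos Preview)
Your outline matches the paper's route exactly. The step you flag as ``the main obstacle'' and leave granted---the symmetric monoidal equivalence $\Sp_{G,\Q}^{\langle K\rangle} \simeq_{\otimes} \Sp_{W_GK,\Q}^{\mathrm{cofree}}$---is precisely what the paper proves as \Cref{thm:gspectraatk}, not by appealing to the general stratification picture but by applying the compactly generated localization principle (\Cref{prop:compact_gen_loc}) to the explicit adjunction
\[
F \colon \Sp_{W_GK} \rightleftarrows \Sp_{G}[A_{\cal{F}_{\not\ge K}}^{-1}] \colon T,
\]
where $T = (-)^K \circ \Res^G_{N_GK}$ and $F$ is inflation, induction, then localization. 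The only computation needed is $T(G/K_+ \otimes \widetilde{E}\cal{F}_{\not\ge K}) \simeq (W_GK)_+$, which follows from $\Phi^K(G/K_+) \simeq ((G/K)^K)_+ \simeq (W_GK)_+$. The coherence issues you worry about do not arise, because one works with a genuine adjunction of $\infty$-categories rather than trying to assemble a $W_GK$-action on geometric fixed points by hand. The free/torsion case then follows by local duality (\Cref{cor:gspectraatkfree}) combined with \Cref{thm:tors}, rather than via a separate ``free counterpart'' of \eqref{eq:borel}.

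One correction: connectedness of $W_GK$ is used \emph{only} to invoke Theorem~A in the final step. The reduction $\Sp_{G}^{\langle K\rangle} \simeq_{\otimes} \Sp_{W_GK}^{\mathrm{cofree}}$ holds for arbitrary compact Lie $W_GK$ and involves no extra bookkeeping from components.
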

When $G$ is a connected compact Lie group and $K$ is the trivial subgroup this recovers \Cref{thm:rational_models}. When $G$ is an arbitrary compact Lie group and $K = G$, then $\Sp_{G,\Q,\langle G \rangle} \simeq \Sp_{G,\Q}^{\langle G \rangle} \simeq_{\otimes} \Sp_{\Q}$, the ordinary category of rational non-equivariant spectra, and this is just the statement that the rational stable homotopy category is equivalent to the derived category of $\Q$-vector spaces.

We finish by constructing an Adams spectral sequence in the category $\Loc_{H\Q}(BX)$ for $X$ a connected finite loop space. In fact, we show that the Adams spectral sequence can easily be constructed using the universal coefficient spectral sequence for ring spectra \cite[Theorem IV.4.1]{ElmendorfKrizMellMay1997Rings}.

\subsection*{Acknowledgements}
We thank Tobias Barthel, Markus Land and Denis Nardin for helpful conversations, the referee for their helpful comments and suggestions, and the SFB Higher Invariants 1085 in Regensburg for support. We were also supported in part by NTNU Trondheim and grant number TMS2020TMT02 from the Trond Mohn Foundation.
\subsection*{Conventions}
We work throughout mainly with $\infty$-categories although some results need to be translated from model categories to $\infty$-categories; in \Cref{sec:appendix} we give a very brief recap of what we need, as well as references to more detailed accounts.

An adjunction $F \colon \cC \leftrightarrows \cD \colon G$ between symmetric monoidal stable $\infty$-categories will be called symmetric monoidal if $F$ is a symmetric monoidal functor. Note that in this case $G$ automatically acquires the structure of a lax symmetric monoidal functor \cite[Corollary 7.3.2.7]{ha}.

For a compact Lie group $G$, we will write $\Sp_G$ for the $\infty$-category of $G$-equivariant spectra; in the non-equivariant case, we write $\Sp$. For a space $X$, and an $\infty$-category $\cC$, we will write $\Fun(X,\cC)$ for the $\infty$-category of functors from $X$ to $\cC$, where $X$ is thought of as an $\infty$-groupoid. For example, when $X = BG$, the category $\Fun(BG,\cC)$ denotes the $\infty$-category of objects in $\cC$ with a $G$-action.

A localizing category $\cD$ of $\cC$ is a full, stable, subcategory of $\cC$ that is closed under extension, retracts, and filtered colimits. It is additionally an ideal if $X \in \cD$ and $Y \in \cC$ implies $X \otimes Y \in \cD$. Given a collection of objects $\{ X_i \}_{i \in I} \in \cC$ we will write $\Loc(\{ X_i \mid i \in I\})$ for the smallest localizing subcategory of $\cC$ containing each $X_i$. In the case of a single object $X$, we simply write $\Loc(X)$. 

Finally, if $\cC$ is a closed symmetric monoidal category with internal hom object $F(-,-)$ and monoidal unit $\unit$, then we write $\mathbb{D}X = F(X,\unit)$ for the internal dual of an object $X$. 
\section{Completion and torsion in algebra and topology}
We begin by reviewing the construction of torsion and complete categories in a symmetric monoidal stable $\infty$-category. We consider torsion and completion for ring spectra and dg-algebras, and relate the latter to algebraic categories of torsion and complete objects.
\subsection{Torsion and complete objects}\label{sec:torsion_completion}
We recall the basics of  torsion and complete objects in a symmetric monoidal presentable stable $\infty$-category $(\cC,\otimes,\unit)$. For simplicity, we assume that $\cC$ is compactly generated by dualizable objects. Note that our assumptions imply that $\cC$ is closed monoidal, and we write $\iHom_{\cC}(-,-)$ for the internal Hom object in $\cC$. They also imply that all compact objects are dualizable \cite[Lemma 2.5]{bhv1} (with the converse holding if the unit $\unit$ is compact). The theory in this section goes back to (at least) Hovey--Palmieri--Strickland \cite{HoveyPalmieriStrickl1997Axiomatic}, and has also been considered by Dwyer--Greenlees \cite{DwyerGreenlees2002Complete}, Mathew--Naumann--Noel \cite{MathewNaumannNoel2017Nilpotence}, and Barthel--Heard--Valenzuela \cite{BarthelHeardValenzuela2018Local}.

We consider three full subcategories of $\cC$ defined in the following way.
\begin{defn}
  Let $\cal{A} = \{A_i\} $ be a set of compact (and hence dualizable) objects of $\cC$.
\begin{enumerate}
  \item We say that $M \in \cC$ is $\cA$-torsion if it is in the localizing subcategory of $\cC$ generated by the set $\cA$. We let $\cC^{\cA-\mathrm{tors}} \subseteq \cC$ denote the full subcategory of $\cA$-torsion objects.
  \item We say that $M \in \cC$ is $\cA$-local if for any $N$ which is $\cA$-torsion, the space of maps $\Hom_{\cC}(N,M) \simeq 0$, or equivalently, if $A_i \otimes M \simeq 0$ for each $A_i \in \cal{A}$ \cite[Proposition 3.11]{MathewNaumannNoel2017Nilpotence}. We let $\cC^{\cA-\mathrm{loc}} \subseteq \cC$ denote the full subcategory of $\cA$-local objects.
  \item We say that $M \in \cC$ is $\cA$-complete if for any $N \in \cC$ which is $\cA$-local the space of maps $\Hom_{\cC}(N,M) \simeq 0$. We let $\cC^{\cA-\mathrm{comp}} \subseteq \cC$ denote the full subcategory of $A$-complete objects.
\end{enumerate}
   \end{defn}
\begin{rem}
  Note that we do not assume that $\cC^{\cA-\mathrm{tors}}$ is a localizing ideal, i.e., is not automatically closed under tensor products. However, in practice, we will often be in the situation where every localizing subcategory is automatically a tensor ideal (for example, this holds whenever the category $\cC$ has a single compact generator \cite[Lemma 1.4.6]{HoveyPalmieriStrickl1997Axiomatic})
\end{rem}

   The following is shown in \cite[Theorem 3.3.5]{HoveyPalmieriStrickl1997Axiomatic} or \cite[Theorem 2.21]{BarthelHeardValenzuela2018Local}.
\begin{thm}[Abstract local duality]\label{thm:hps}
Let $\cC$ and $\cA$ be as above.
\begin{enumerate}
  \item The inclusion functor $\iota_{\mathrm{tors}} \colon \cC^{\cA-\mathrm{tors}} \hookrightarrow \cC$ has a right adjoint $\Gamma_{\cA}$, and the inclusion functors $\iota_{\mathrm{loc}} \colon \cC^{\cA-\mathrm{loc}} \hookrightarrow \cC$ and $\iota_{\mathrm{comp}} \colon \cC^{\cA-\mathrm{comp}} \to \cC$ have left adjoints $-[\cA^{-1}]$ and $\Lambda^{\cA}$, respectively.
  \item There are cofiber sequences
  \[
\Gamma_{\cA}X \to X \to X[\cA^{-1}]
  \]
  and
  \[
\Delta_{\cA}(X) \to X \to \Lambda^{\cA}X
  \]
  for all $X \in \cC$. In particular, $\Gamma_{\cA}$ is a colocalization functor and both $-[\cA^{-1}]$ and $\Lambda^{\cA}$ are localization functors.
  \item The functors $\Lambda^{\cA} \colon \cC^{\cA-\mathrm{tors}} \to \cC^{\cA-\mathrm{comp}}$ and $\Gamma_{\cA} \colon \cC^{\mathrm{comp}} \to \cC^{\mathrm{tors}}$ are mutually inverse equivalences of stable $\infty$-categories.
  \item Considered as endofunctors of $\cC$, there are adjunctions
  \[
\Hom_{\cC}(\Gamma_{\cA}X,Y) \simeq \Hom_{\cC}(X,\Lambda^{\cA}Y)
  \]
  and
  \[
\iHom_{\cC}(\Gamma_{\cA}X,Y) \simeq \iHom_{\cC}(X,\Lambda^{\cA}Y)
  \]
  between $\Gamma_A$ and $\Lambda^A$.
\end{enumerate}
\begin{rem}
  We note that the functors and categories above do not depend on the set $\cA$, but only on the thick subcategory it generates. 
\end{rem}
\begin{rem}\label{rem:smashing}
  If $\cC^{\cA-\mathrm{tors}}$ is a localizing ideal, then $\Gamma_{\cA}$ and $-[\cA^{-1}]$ are both smashing, i.e., $\Gamma_{\cA}(X) \simeq \Gamma_{\cA}(\unit) \otimes X$ and similar for $-[{\cA}^{-1}]$.
\end{rem}
\begin{rem}
  In the literature $\cal{A}$-torsion objects are also sometimes referred to as $\cal{A}$-cellular objects, for example in \cite{GreenleesShipley2013cellularization} (see in particular \cite[Proposition 2.5 and Corollary 2.6]{GreenleesShipley2013cellularization})
\end{rem}
\end{thm}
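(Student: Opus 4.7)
My plan is to follow the approach of \cite{HoveyPalmieriStrickl1997Axiomatic} and \cite{BarthelHeardValenzuela2018Local}, working throughout in the presentable $\infty$-categorical setting. The hypothesis that $\cA$ consists of a set of compact objects will provide the accessibility needed to invoke the adjoint functor theorem at each step. First I would construct $\Gamma_\cA$: since each $A_i$ is compact and $\cC$ is presentable, the localizing subcategory $\cC^{\cA-\mathrm{tors}}$ generated by $\cA$ is itself presentable, and $\iota_{\mathrm{tors}}$ preserves all small colimits, so the adjoint functor theorem supplies a right adjoint $\Gamma_\cA$. I would define $X[\cA^{-1}]$ to be the cofiber of the counit $\Gamma_\cA X \to X$; for any $T \in \cC^{\cA-\mathrm{tors}}$, applying $\Hom_\cC(T,-)$ and using the adjunction shows $\Hom_\cC(T, X[\cA^{-1}]) \simeq 0$, so $X[\cA^{-1}]$ is $\cA$-local, and this identifies $-[\cA^{-1}]$ with the left adjoint to $\iota_{\mathrm{loc}}$.

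Next I would construct $\Lambda^\cA$. The subcategory $\cC^{\cA-\mathrm{comp}}$ is closed under limits, and by exhibiting it as a right orthogonal to an accessible subcategory it is itself accessible, so a second application of the adjoint functor theorem produces the left adjoint $\Lambda^\cA$. Setting $\Delta_\cA X = \fib(X \to \Lambda^\cA X)$ yields the second cofiber sequence, with $\Delta_\cA X$ automatically $\cA$-local since any map from an $\cA$-local object into $\Lambda^\cA X$ must vanish.

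For the equivalence of torsion and complete categories, I would check that $\Lambda^\cA$ carries $\cC^{\cA-\mathrm{tors}}$ into $\cC^{\cA-\mathrm{comp}}$ (and dually for $\Gamma_\cA$), using that the error terms $\Delta_\cA X$ and $X[\cA^{-1}]$ are $\cA$-local. Mutual inversion then follows by chasing the cofiber sequences: for torsion $T$, the fiber of $T \to \Gamma_\cA \Lambda^\cA T$ is simultaneously torsion and $\cA$-local, hence zero. Finally, the external $\Gamma_\cA \dashv \Lambda^\cA$ adjunction between the endofunctors is a formal consequence of combining the two cofiber sequences with the orthogonality relations defining local and complete objects, and the internal-hom version follows by applying the external statement with source $Z \otimes X$ for arbitrary $Z$ and using the tensor--hom adjunction in $\cC$.

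The main technical obstacle is the construction of the completion functor $\Lambda^\cA$ in the second paragraph, since $\cC^{\cA-\mathrm{comp}}$ is not obviously presentable a priori; the standard device is to realize it as an accessible localization of $\cC$ by identifying it as the right orthogonal to an essentially small family derived from $\{A_i\}$, at which point Lurie's adjoint functor theorem applies. Everything else is a formal consequence of the two adjunctions and the orthogonality relations.
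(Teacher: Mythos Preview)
The paper does not give its own proof of this theorem; it simply cites \cite[Theorem 3.3.5]{HoveyPalmieriStrickl1997Axiomatic} and \cite[Theorem 2.21]{BarthelHeardValenzuela2018Local}. Your sketch is precisely the standard argument from those references, and parts (1)--(3) and the external adjunction in (4) go through exactly as you describe.

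One small wrinkle: your derivation of the internal-hom adjunction in (4) does not quite work as written. Tracing through, you obtain
\[
\Hom_{\cC}(Z,\iHom_{\cC}(X,\Lambda^{\cA}Y)) \simeq \Hom_{\cC}(\Gamma_{\cA}(Z\otimes X),Y),
\]
whereas what you need on the other side is $\Hom_{\cC}(Z\otimes \Gamma_{\cA}X,Y)$. These agree only if $Z\otimes \Gamma_{\cA}X$ is again $\cA$-torsion, i.e., if $\cC^{\cA-\mathrm{tors}}$ is a localizing \emph{ideal}. In the cited references this hypothesis is in force, and in every application in the present paper it holds automatically (cf.\ \Cref{rem:smashing}), so the point is harmless in practice; but as stated in full generality the internal-hom clause needs that extra hypothesis, and your argument correctly reflects that dependence.
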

Pictorially, we can represent the functors and categories in the following digram.
\begin{equation}\label{eq:abstractdiagram}
\begin{gathered}
\xymatrix{& \cC^{\cA-\mathrm{loc}} \ar@<0.5ex>[d] \ar@{-->}@/^1.5pc/[ddr] \\
& \cC \ar@<0.5ex>[u]^{-[\cA^{-1}]} \ar@<0.5ex>[ld]^{\Gamma_{\cA}} \ar@<0.5ex>[rd]^{\Lambda^{\cA}} \\
\cC^{\cA-\mathrm{tors}} \ar@<0.5ex>[ru] \ar[rr]_{\sim} \ar@{-->}@/^1.5pc/[ruu] & & \cC^{\cA-\mathrm{comp}}, \ar@<0.5ex>[lu]}
\end{gathered}
\end{equation}
Each of the pairs $(\cC^{\cal{A}-\mathrm{tors}},\cC^{\cal{A}-\mathrm{loc}})$ and $(\cC^{\cal{A}-\mathrm{loc}},\cC^{\cal{A}-\mathrm{comp}})$ form a \emph{semi-orthogonal} decomposition of $\cC$ in the sense of \cite[Definition 7.2.0.1]{sag}.

We note the following, which is \cite[Proposition 2.34]{BarthelHeardValenzuela2018Local}.
\begin{lem}\label{lem:bosfield_localization}
Suppose that $A \in \cC$, and let $\cA = \{ A \otimes D \}$ where $D \in \cC$ runs over a set of compact generators of $\cC$.\footnote{This conditions forces $\cC^{\cA-\mathrm{loc}}$ to be the localizing tensor ideal generated by $A$ \cite[Lemma 1.4.6]{HoveyPalmieriStrickl1997Axiomatic}.} The inclusion $\cC^{\cA-\mathrm{comp}} \hookrightarrow \cC$ has a left adjoint given by Bousfield localization at $\cA$, i.e., $\cC^{\cA-\mathrm{comp}} \simeq_{\otimes} L^{\cC}_{\cA}$.
\end{lem}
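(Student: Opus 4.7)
The plan is to identify $\cC^{\cA-\mathrm{comp}}$ with the local objects of the classical Bousfield localization at the single object $A$, and then to observe that this localization is symmetric monoidal because the class of acyclics is a tensor ideal. First, I would unwind what it means to be $\cA$-local under the present choice of $\cA$. By the equivalent characterization given in the definition, an object $M$ is $\cA$-local iff $A\otimes D\otimes M\simeq 0$ for every compact generator $D$. Since the compact generators detect zero objects in $\cC$ (in our setting every compact object is dualizable, so $D\otimes X\simeq 0$ for all compact generators $D$ forces $X\simeq 0$), this is equivalent to $A\otimes M\simeq 0$. Hence the class of $\cA$-local objects coincides with the class of classically $A$-acyclic objects.

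Next, I would unwind the definition of $\cA$-complete: $M$ is $\cA$-complete iff $\Map_{\cC}(N,M)\simeq 0$ for every $\cA$-local $N$, which by the previous step is iff $\Map_{\cC}(N,M)\simeq 0$ for every $N$ satisfying $A\otimes N\simeq 0$. These are precisely the Bousfield $A$-local objects, so $\cC^{\cA-\mathrm{comp}}$ agrees with the essential image of the Bousfield localization functor $L_{\cA}^{\cC}$. The left adjoint to the inclusion is therefore $L_{\cA}^{\cC}$ itself, and its existence as an accessible left adjoint is already guaranteed by \Cref{thm:hps}.

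For the symmetric monoidal statement, I would observe that under our hypothesis on $\cA$, the class of $A$-acyclic objects is closed under tensoring with arbitrary objects of $\cC$ and hence is a localizing tensor ideal (this is precisely the content of the footnoted condition). The standard criterion for a Bousfield localization to be compatible with the symmetric monoidal structure, see \cite{ha}, then yields that $L_{\cA}^{\cC}$ is a symmetric monoidal localization, establishing the required equivalence $\cC^{\cA-\mathrm{comp}}\simeq_{\otimes} L_{\cA}^{\cC}\cC$. The main obstacle is largely bookkeeping: the paper's notion of \emph{$\cA$-local} must be carefully distinguished from the classical notion of \emph{$A$-local} and identified instead with the classical notion of \emph{$A$-acyclic}; once this translation is done, everything reduces to standard facts about Bousfield localization at a single object whose acyclic class is a tensor ideal.
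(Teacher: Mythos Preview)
Your proof is correct and is the standard unwinding of the definitions; the paper itself does not prove this lemma but simply cites \cite[Proposition 2.34]{BarthelHeardValenzuela2018Local}, so there is nothing to compare against beyond that reference. One small remark: the parenthetical justification that $D\otimes X\simeq 0$ for all compact generators $D$ forces $X\simeq 0$ via dualizability is slightly roundabout (you would need to know the duals of the generators are again among the generators, or at least compact); the cleanest argument is that $\{Y: Y\otimes X\simeq 0\}$ is a localizing subcategory of $\cC$ containing the compact generators, hence all of $\cC$, so in particular $\unit\otimes X\simeq X\simeq 0$.
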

We now present a simplified version of the Greenlees--Shipley cellularization principle \cite[Corollary 2.7]{GreenleesShipley2013cellularization} that suffices for our purposes.

\begin{prop}[Greenlees--Shipley]\label{prop:cellularization_principle}
  Let $\cC$ and $\cD$ be stable $\infty$-categories, and let
  \[
\begin{tikzcd}
F \colon \cC \arrow[r, shift left] & \cD \colon G \arrow[l, shift left]
\end{tikzcd}
  \]
be an adjunction.
\begin{enumerate}
  \item Let $K$ be in $\cC$ and suppose that the following hold:
  \begin{enumerate}
    \item $K$ is compact is $\cC$, and $F(K)$ is compact in $\cD$.
    \item The unit $\eta \colon K \to GF(K)$ is a natural isomorphism.
  \end{enumerate}
  Then, there is an equivalence of $\infty$-categories
  \[
\cC^{K-\mathrm{tors}} \simeq \cD^{F(K)-\mathrm{tors}}.
  \]
  \item Let $L$ be in $\cD$ and suppose that the following hold:
  \begin{enumerate}
    \item $L$ is compact in $\cD$, and $G(L)$ is compact in $\cC$.
    \item The counit $\epsilon \colon FG(L) \to L$ is a natural isomorphism.
  \end{enumerate}
  Then, there is an equivalence of $\infty$-categories
  \[
\cC^{G(L)-\mathrm{tors}} \simeq \cD^{L-\mathrm{tors}}.
  \]
\end{enumerate}
\end{prop}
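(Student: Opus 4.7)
My plan is to restrict the adjunction $(F,G)$ to an adjoint equivalence between the torsion subcategories, verify the unit and counit become equivalences on the compact generators, and then propagate this to the whole category by a colimit/thick-subcategory argument. I will treat part $(1)$ in detail; part $(2)$ will follow either by the dual argument, or more economically by applying part $(1)$ to the object $K := G(L)$, since the hypotheses then guarantee that $F(K) = FG(L) \simeq L$ is compact and that the unit $K \to GF(K)$ is an equivalence (this is one of the triangle identities, together with the fact that $\epsilon$ is an equivalence).

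First I would restrict $F$. Because $F$ is a left adjoint it preserves all small colimits, and so it carries the localizing subcategory $\cC^{K-\mathrm{tors}}$ generated by $K$ into the localizing subcategory $\cD^{F(K)-\mathrm{tors}}$ generated by $F(K)$, yielding a functor $F' \colon \cC^{K-\mathrm{tors}} \to \cD^{F(K)-\mathrm{tors}}$. I would define its putative right adjoint by composing $G$ with the torsion colocalization of \Cref{thm:hps}:
\[
G' \;:=\; \Gamma_K \circ G \circ \iota_{\mathrm{tors}}^{\cD} \colon \cD^{F(K)-\mathrm{tors}} \longrightarrow \cC^{K-\mathrm{tors}}.
\]
Composing adjunctions formally gives $F' \dashv G'$.

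Next I would check the unit and counit at the compact generators. For the unit $\eta' \colon \id \to G'F'$ at $K$: by hypothesis $\eta \colon K \xrightarrow{\simeq} GF(K)$, and since $K$ is already $K$-torsion, $\Gamma_K GF(K) \simeq GF(K) \simeq K$, so $\eta'_K$ is an equivalence. For the counit $\epsilon' \colon F'G' \to \id$ at $F(K)$: $F'G'(F(K)) = F\Gamma_K GF(K) \simeq F\Gamma_K K \simeq F(K)$, and tracing through the construction this agrees with the natural counit map. To extend this to all objects I need both $F'$ and $G'$ to preserve small colimits: $F'$ does because it is a left adjoint; $G'$ does because $K$ is a compact generator of $\cC^{K-\mathrm{tors}}$ and
\[
\Hom_{\cC^{K-\mathrm{tors}}}(K, G'(-)) \simeq \Hom_{\cC}(K, G(-)) \simeq \Hom_{\cD}(F(K), -),
\]
which commutes with filtered colimits by compactness of $F(K)$ in $\cD$, and is exact. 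The class of objects on which $\eta'$ (respectively $\epsilon'$) is an equivalence is then closed under arbitrary colimits and retracts, and contains the compact generator, hence coincides with the entire torsion subcategory.

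The main subtle point is step three, namely checking that $G'$ commutes with colimits: neither $G$ nor $\Gamma_K$ individually does, so the argument really must route through the compact generator and use both compactness hypotheses simultaneously. Once this is in hand the rest is routine formal nonsense, and part $(2)$ follows from part $(1)$ as above.
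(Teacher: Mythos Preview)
Your proof is correct and follows essentially the same strategy as the paper: restrict $F$ to the torsion subcategories, set $G' = \Gamma_K \circ G$, verify the unit and counit on the generator $K$ (respectively $F(K)$, via the triangle identities), and then propagate by a localizing-subcategory argument. Your treatment is in fact slightly more careful than the paper's: you explicitly justify that $G'$ preserves filtered colimits (via $\Hom(K,G'(-)) \simeq \Hom(F(K),-)$ and compactness of $F(K)$), whereas the paper simply asserts that the locus where the unit is an equivalence is localizing. Your reduction of part~(2) to part~(1) by taking $K = G(L)$ and invoking the triangle identity is a clean alternative to the ``minor adjustments'' the paper leaves to the reader.
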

\begin{proof}
We prove (1), and leave the minor adjustments for (2) to the reader.  We first claim that $(F,G)$ gives rise to an adjunction
\begin{equation}\label{eq:ss_cell}
\begin{tikzcd}
F' \colon \cC^{K-\mathrm{tors}}  \arrow[r, shift left] & \cD^{K-\mathrm{tors}}  \colon G' \arrow[l, shift left]
\end{tikzcd}
\end{equation}
Indeed, because $F$ preserves colimits, $F(\Loc(L)) \subseteq \Loc(F(K))$, see, for example, \cite[Lemma 2.5]{bchv1}. We can therefore take $F'$ to be the restriction of $F$ to $\Loc(K)$. Setting $G' = \Gamma_{K}G$, one verifies that $(F',G')$ form an adjoint pair, which we claim is an equivalence.

Indeed, consider the full subcategory of $\cC^{\mathrm{tors}}$ consisting of those $X$ for which the unit $X \to GF(X)$ is an equivalence. This is a localizing subcategory containing $K$ by assumption. Since $K$ generates $\cC^{K-\mathrm{tors}}$ this localizing subcategory is all of $\cC^{\mathrm{tors}}$. Likewise, the full subcategory of $\cD^{\mathrm{tors}}$ consisting of those $Y$ for which the counit $FG(Y) \to Y$ is an equivalence, is localizing. Moreover, it contains $F(K)$ by the triangle identities, and hence is equal to $\cD^{F(K)-\mathrm{tors}}$. 
\end{proof}
A sort of dual result, due to Pol and Williamson, is the compactly generated localization principle \cite[Theorem 3.14]{pol_williamson}. Again, we only prove a special case of their theorem which will suffice for our purposes.
\begin{prop}[Pol--Williamson]\label{prop:compact_gen_loc}
  Let $\cC$ and $\cD$  be symmetric monoidal stable $\infty$-categories and
  \[
\begin{tikzcd}
F \colon \cC \arrow[r, shift left] & \cD \colon G \arrow[l, shift left]
\end{tikzcd}
  \]
  a symmetric monoidal adjunction.
  \begin{enumerate}
    \item Let $E \in \cC$ and suppose that the following hold:
  \begin{enumerate}
    \item $L_E\cC$ is compactly generated by $K$ and $L_{F(E)}\cal{D}$ is compactly generated by $F(K)$.
    \item The unit map $\eta_{K} \colon K \to GF(K)$ is an equivalence.
  \end{enumerate}
  Then, there is a symmetric monoidal equivalence of $\infty$-categories
  \[
L_E\cC \simeq_{\otimes} L_{F(E)}\cD
  \]
  \item Let $E' \in \cD$ and suppose that the following hold:
  \begin{enumerate}
    \item $L_{E'}\cD$ is compactly generated by $L$ and $L_{G(E')}\cal{D}$ is compactly generated by $G(L)$.
    \item The counit maps $\epsilon_{L} \colon FG({L}) \to {L}$ and $\epsilon_{E'} \colon FG(E') \to E'$ are equivalences.
  \end{enumerate}
  Then, there is a symmetric monoidal equivalence of $\infty$-categories
  \[
L_{G(E')}\cC \simeq_{\otimes} L_{E'}\cD
  \]
  \end{enumerate}
\end{prop}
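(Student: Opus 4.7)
The plan is to adapt the argument for the cellularization principle (\Cref{prop:cellularization_principle}), with Bousfield localizations playing the role of the torsion categories. I focus on part (1); part (2) is formally dual, starting instead from the counit on the compact generator $L$ and using the hypothesis $\epsilon_{E'} \colon FG(E') \xrightarrow{\simeq} E'$ to identify $L_{FG(E')}\cD$ with $L_{E'}\cD$.

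The first step is to produce a restricted symmetric monoidal adjunction
\[
F' \colon L_E\cC \rightleftarrows L_{F(E)}\cD \colon G'.
\]
Since $F$ is symmetric monoidal and preserves colimits, $E \otimes X \simeq 0$ implies $F(E) \otimes F(X) \simeq F(E \otimes X) \simeq 0$, so $F$ sends $E$-acyclics to $F(E)$-acyclics. By the $(F,G)$-adjunction, $G$ accordingly sends $F(E)$-local objects to $E$-local objects: for $Y$ that is $F(E)$-local and $M$ that is $E$-acyclic, $\Map_{\cC}(M, G(Y)) \simeq \Map_{\cD}(F(M), Y) \simeq 0$. Taking $F' = L_{F(E)} \circ F \circ \iota$ and $G' = G|_{L_{F(E)}\cD}$ therefore yields an adjunction, and $F'$ inherits a symmetric monoidal structure from $F$ via the universal property of the symmetric monoidal Bousfield localization, valid because the acyclics form a tensor ideal under the hypothesis of \Cref{lem:bosfield_localization}.

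To see this adjunction is an equivalence, I check that the unit is invertible on the compact generator $K$ and then spread this globally. Since $F(K)$ is a compact generator of $L_{F(E)}\cD$ by assumption (a), $F(K)$ is $F(E)$-local, so $F'(K) \simeq F(K)$ and the restricted unit $K \to G'F'(K) \simeq GF(K)$ agrees with $\eta_K$, an equivalence by (b). Let $\cC' \subseteq L_E\cC$ denote the full subcategory of objects on which the unit is an equivalence: it is stable, closed under retracts, and closed under filtered colimits, the latter because $F'$ (a left adjoint) preserves colimits and $G'$ preserves filtered colimits (being the right adjoint of $F'$, which sends the compact generator $K$ to a compact object). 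Since $\cC'$ contains $K$, a compact generator of $L_E\cC$, it must equal $L_E\cC$. A parallel localizing-subcategory argument, using the triangle identities to conclude that the counit is an equivalence at $F(K)$, shows the counit is invertible throughout $L_{F(E)}\cD$. The resulting equivalence is symmetric monoidal because $F'$ is.

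I expect the subtlest point to be ensuring descent of the symmetric monoidal structure along the Bousfield localizations — verifying that $F$ factors through a genuinely symmetric monoidal (not merely monoidal-on-generators) functor between the localized categories — which is precisely where the tensor-ideal choice of acyclics from \Cref{lem:bosfield_localization} is used. After that, the equivalence is a formal consequence of the unit and counit being invertible on compact generators together with the colimit-preservation and exactness of the restricted adjoint pair.
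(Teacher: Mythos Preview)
Your proposal is correct and follows essentially the same route as the paper: construct the restricted adjunction $(F',G')$ by showing $F$ preserves acyclics and hence $G$ preserves locals, verify the unit at the compact generator $K$, use that $F'$ preserves compacts so $G'$ preserves (filtered) colimits, and then propagate both unit and counit by localizing-subcategory arguments. One small quibble: your appeal to \Cref{lem:bosfield_localization} for the tensor-ideal property of the acyclics is misplaced---that lemma identifies $\cC^{\cA-\mathrm{comp}}$ with a Bousfield localization, whereas the fact you need (that $E$-acyclics form a $\otimes$-ideal, so $L_E$ is symmetric monoidal) holds for any object $E$ and requires no extra hypothesis.
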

\begin{proof}
We prove (1); the proof for (2) is similar - the extra assumption is only used to ensure that the adjunction descends to the localized categories, as we now describe in (1).

First observe that if $Y \in \cC$ is $E$-acyclic, then $F(Y) \in \cD$ is $F(E)$-acyclic because $F$ is a symmetric monoidal functor. We claim it follows that if $N \in L_{F(E)}\cD$, then $G(N) \in L_E\cC$. To see this, choose an $E$-acyclic $Y$, then we must show that $\Hom_{\cC}(Y,G(N)) \simeq \ast$. But $\Hom_{\cC}(Y,G(N)) \simeq \Hom_{\cD}(F(Y),N) \simeq \ast$ because $F(Y)$ is $F(E)$-acyclic and $N \in L_{F(E)}\cD$ by assumption.

  Let $ F' = L_{F(E)} \circ F$, then by inspection we have a symmetric monoidal adjunction $ F' \colon L_A \cC \leftrightarrows L_{F(E)}\cD \colon G'$, where $G'$ is the restriction of $G$ to $L_{F(E)}\cD$, which we claim is an equivalence.

First, because $F( K) \in L_{F(E)}\cD$ it is not hard to see that assumption (b) implies that the unit map $ \eta'_{ K} \colon  K \to G' F'( K)$ is also an equivalence. Note that $ F'$ preserves colimits, and since it preserves compact objects by assumption (a), its right adjoint $G'$ preserves colimits as well. It follows that the unit is always an equivalence, and that $F'$ is fully-faithful.

It then follows from the triangle identities that the counit $F'G'(F({K})) \to F( K)$ is also an equivalence, and a localizing subcategory argument shows then that the counit is always an equivalence. Hence, $G'$ is also fully faithful, and $(F',G')$ is an adjoint equivalence as claimed.
\end{proof}
\subsection{Torsion and completion for graded commutative rings}
Throughout this section we fix a graded commutative ring $A$, and let $\Mod_A$ denote the category of dg-$A$-modules. We can give this category the projective model structure \cite[Theorem 3.3]{BarthelMayRiehl2014Six} with weak equivalences the quasi-isomorphisms, fibrations degreewise surjections, and cofibrations the subcategory of maps which have the left lifting property with respect to every map which is simultaneously a fibration and a weak equivalence. This is a compactly generated (in the sense of \cite[Definition 6.5]{BarthelMayRiehl2014Six}) monoidal model category, and we write $\cal{D}_A$ for the associated symmetric monoidal stable $\infty$-category (see \Cref{sec:appendix} for a very brief summary of the translation between model categories and $\infty$-categories).

  We can also give $\Mod_A$ the injective model structure with weak equivalences the quasi-isomorphisms, cofibrations degreewise monomorphisms, and fibrations those maps which have the right lifting property with respect to every map that is simultaneously a cofibration and a weak equivalence. Because the weak equivalences are the same as in the projective model structure, the underlying $\infty$-category $\cal{D}_A$ does not depend on which model structure we use. However, the injective model structure is not monoidal, and so from this perspective one does not see the symmetric monoidal structure on $\cal{D}_A$.

  For any $x \in A$, we define the unstable Koszul complex as
  \[
K(x) = \operatorname{fib}(\Sigma^{|x|}A \xr{\cdot x} A),
  \]
  where the fiber is taken in $\cal{D}_A$, and the stable Koszul complex
  \[
K_{\infty}(x) = \operatorname{fib}(A \to A[x^{-1}])
  \]
  where, as usual, $A[x^{-1}]$ is defined as the colimit of the multiplication by $x$ map.

  Let $I = (x_1,\ldots,x_n)$ be a finitely generated ideal, and then define
  \[
K(I) = K(x_1) \otimes_A \cdots \otimes_A K(x_n) \quad \text{ and } \quad K_{\infty}(I) = K_{\infty}(x_1) \otimes_A \cdots \otimes_A K_{\infty}(x_n).
  \]
  \begin{defn}
    Let $\cD_A^{I-\mathrm{tors}}$ denote the localizing subcategory of $A$ generated by the compact object $K(I)$.
  \end{defn}
Accordingly, applying the general machinery of \Cref{sec:torsion_completion}, we have the following categories and functors:
\[
\begin{split}
\Gamma_I &\colon \cD_A \to \cD_A^{I-\mathrm{tors}}\\
-[I^{-1}]& \colon \cD_A \to \cD_A[A^{-1}]\\
\Lambda^I& \colon \cD_A \to \cD_A^{I-\mathrm{comp}},
\end{split}
\]
as well as an equivalence of $\infty$-categories $\cD_A^{I-\mathrm{tors}} \simeq \cD_A^{I-\mathrm{comp}}$. 
\begin{rem}\label{rem:torsion_functor}
  As shown in \cite[Section 6]{DwyerGreenlees2002Complete}, we have $\Gamma_A(-) \simeq K_{\infty}(I) \otimes_A -$, and hence $\Lambda^A(-) \simeq \Hom_A(K_{\infty}(I),-)$ by local duality.
\end{rem}

\begin{rem}\label{rem:ideal_torsion}
  The notation $-[I^{-1}]$ is suggestive. Indeed, suppose that $I = (x_1)$ is principal, then it is straightforward to see that $M[I^{-1}] \simeq M[x_1^{-1}] \simeq M \otimes A[x_1^{-1}]$. In fact, $\cD_A^{I-\mathrm{loc}} \simeq \cD_{A[x_1^{-1}]}$.  More generally, $M[I^{-1}] \simeq \bigotimes_{i=1}^n M[x_i^{-1}]$, where the tensor product is taken in $\cD_A$.  In particular, we see that $M \in \cD_A^{I-\mathrm{tors}}$ if and only if $M[x_i^{-1}] \simeq 0$ for $1 \le i \le n$. This characterization will prove useful later.
\end{rem}

\begin{rem}\label{rem:homology_check}
  The categories $\cD_A^{I-\mathrm{tors}}$ and $\cD_A^{I-\mathrm{comp}}$ can both be characterized purely homologically. Indeed, using the local cohomology and homology spectral sequences (see \cite[Proposition 3.20]{BarthelHeardValenzuela2018Local} or \cite[Section 6]{DwyerGreenlees2002Complete}) one sees that
\[
\begin{split}
\cD_A^{I-\mathrm{tors}} &= \{ M \in \cD_A \mid H_*M \text{ is } I-\mathrm{torsion}\}\\
\cD_A^{I-\mathrm{comp}} &= \{ M \in \cD_A \mid H_*M \text{ is } L_0^I-\mathrm{complete} \}
\end{split}
\]
where the $I$-torsion and $L_0^I$-completion are discussed in more detail in \Cref{sec:algebraic_torsion}. 
\end{rem}
\subsection{Algebraic torsion and completion for graded rings}\label{sec:algebraic_torsion}
In this section, we compare the categories constructed via local duality in the previous section with derived categories of certain abelian categories. We now suppose that $A$ is Noetherian, and that $I$ is generated by a regular sequence. These assumptions can be weakened; it would suffice to take $A$ to be a commutative ring and $I$ to be a weakly proregular sequence (see \cite[Definition 3.21]{PortaShaulYekutieli2014homology}),  however they suffice for our purposes.

  Let $I \subset A$ be an ideal, and let $\Mod_A^{I-\mathrm{tors}}$ be the abelian subcategory of $I$-torsion modules, i.e. those $M \in \Mod_A$ for which every element of the underlying graded module is annihilated by a power of $I$, see \cite{BrodmannSharp2013Local}. We note that $\Mod_A^{I-\mathrm{tors}}$ is Grothendieck abelian, see \cite[\href{https://stacks.math.columbia.edu/tag/0BJA}{Tag 0BJA}]{stacks-project} and is hence locally presentable \cite[Proposition 3.10]{MR1780498}. 

  We recall that there is an adjunction
  \[
\begin{tikzcd}
i \colon \Mod_A^{I-\mathrm{tors}} \arrow[r, shift left] & \Mod_A \colon \Gamma^0_I \arrow[l, shift left]
\end{tikzcd}
  \]
We give $\Mod_A^{I-\mathrm{tors}}$ the injective model structure induced by $\Gamma^0_I$ using \cite[Theorem 2.2.1]{HessKcedziorekRiehlShipley2017necessary} and let $\cD(\Mod_A^{I-\mathrm{tors}})$ denote the associated $\infty$-category. Note that this does not have a natural monoidal structure. The above adjunction is Quillen (where $\Mod_A$ is given the injective model structure), and so by \Cref{lem:hinich_adjoint} gives rise to an adjunction of $\infty$-categories
  \[
\begin{tikzcd}
\underline{i} \colon \cD(\Mod_A^{I-\mathrm{tors}}) \arrow[r, shift left] & \cD_A \colon \underline{\Gamma^0_I} \arrow[l, shift left]
\end{tikzcd}
  \]
  The following appears in various forms throughout the literature, e.g., \cite{DwyerGreenlees2002Complete,GreenleesShipley2011algebraic,PortaShaulYekutieli2014homology,BarthelHeardValenzuela2020Derived}.
\begin{thm}\label{thm:tors}
  There is an equivalence of $\infty$-categories
  \[
\cD(\Mod_A^{I-\mathrm{tors}}) \simeq \cD_A^{I-\mathrm{tors}}
  \]
\end{thm}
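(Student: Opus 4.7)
The plan is to apply the Greenlees--Shipley cellularization principle (\Cref{prop:cellularization_principle}) to the adjunction $(\underline{i}, \underline{\Gamma^0_I})$ given above. I first verify that $\underline{i}$ factors through $\cD_A^{I-\mathrm{tors}} \subset \cD_A$: any object of $\Mod_A^{I-\mathrm{tors}}$ is concentrated in homological degree zero with $I$-torsion underlying module, and this property persists after taking a cofibrant replacement, so by the homological characterization in \Cref{rem:homology_check} the image lands in $\cD_A^{I-\mathrm{tors}}$.

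Next I apply \Cref{prop:cellularization_principle}(2) with $L = K(I) \in \cD_A^{I-\mathrm{tors}}$. By definition $K(I)$ is a compact generator of $\cD_A^{I-\mathrm{tors}}$. Since $I$ is generated by a regular sequence, $K(I)$ is quasi-isomorphic (up to a shift) to the $I$-torsion module $A/I$. Applying $\underline{\Gamma^0_I}$ recovers $A/I$ in $\cD(\Mod_A^{I-\mathrm{tors}})$, because by \Cref{rem:torsion_functor} the derived torsion functor on $\cD_A$ is $K_\infty(I) \otimes_A -$, and $K_\infty(I) \otimes_A M \simeq M$ whenever $M$ is already $I$-torsion. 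It follows that the counit $\underline{i}\,\underline{\Gamma^0_I}(K(I)) \to K(I)$ is an equivalence. The cellularization principle then yields $\cD(\Mod_A^{I-\mathrm{tors}})^{A/I-\mathrm{tors}} \simeq \cD_A^{K(I)-\mathrm{tors}} = \cD_A^{I-\mathrm{tors}}$, so it remains to show that $A/I$ is a compact generator of the full category $\cD(\Mod_A^{I-\mathrm{tors}})$.

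The main obstacle is verifying compactness; generation is more standard, since every $I$-torsion module is a filtered colimit of submodules annihilated by some power $I^n$, and each such submodule admits a finite filtration by $A/I$-modules. For compactness, the issue is that $\Mod_A^{I-\mathrm{tors}}$ need not have enough projectives, so one is forced to work through the injective model structure and compare $\Ext$-groups across the inclusion. Here one uses the classical fact (Matlis/Gabriel theory, available because $A$ is Noetherian) that the injectives of $\Mod_A^{I-\mathrm{tors}}$ are exactly the $I$-torsion injective $A$-modules — these are the indecomposable injective hulls $E(A/\mathfrak{p})$ for primes $\mathfrak{p} \supseteq I$, and their direct sums. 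Consequently an injective resolution of a torsion module inside $\Mod_A^{I-\mathrm{tors}}$ is still an injective resolution in $\Mod_A$, so that $\Ext^*_{\Mod_A^{I-\mathrm{tors}}}(A/I, -) \cong \Ext^*_A(A/I, -)$ on torsion inputs. Compactness of $A/I$ in $\cD(\Mod_A^{I-\mathrm{tors}})$ thus reduces to compactness of $K(I) \simeq A/I$ in $\cD_A$, which holds by construction. With all hypotheses of \Cref{prop:cellularization_principle}(2) verified, the equivalence follows.
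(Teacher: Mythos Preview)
Your proposal is correct and follows essentially the same route as the paper: apply the cellularization principle to the adjunction $(\underline{i},\underline{\Gamma^0_I})$ with $L=K(I)$, check the counit on $K(I)$, and verify that $K(I)$ (equivalently $A/I$, under the regularity hypothesis) is a compact generator of $\cD(\Mod_A^{I-\mathrm{tors}})$. The paper simply cites Dwyer--Greenlees and Greenlees--Shipley for this last fact, whereas you sketch it directly via the Matlis description of torsion injectives; that is a welcome elaboration.

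Two small points. First, your opening paragraph is unnecessary and slightly off: objects of $\cD(\Mod_A^{I-\mathrm{tors}})$ are complexes, not modules concentrated in degree zero, and in the injective model structure every object is already cofibrant. In any case the cellularization principle does not require you to know in advance that $\underline{i}$ lands in $\cD_A^{I-\mathrm{tors}}$. Second, your compactness argument establishes $\Ext^*_{\Mod_A^{I-\mathrm{tors}}}(A/I,-)\cong\Ext^*_A(A/I,-)$ on torsion \emph{modules}; to conclude compactness in the unbounded derived category you should note either that the same injective comparison upgrades to K-injective resolutions, or (more cleanly) that $\underline{i}$ is fully faithful and preserves coproducts, so compactness of $A/I$ is inherited from $\cD_A$. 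This is routine, but worth saying.
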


\begin{proof}
There are a number of ways to do this - we follow \cite[Section 5]{GreenleesShipley2013cellularization} and use the cellularization principle \Cref{prop:cellularization_principle}. Thus, we take $L = K(I)$ noting that this is compact in $\cD_A$. The homology of $K(I)$ is $I$-power torsion, and hence we also write $K(I)$ to refer to the same object in $\cD(\Mod_A^{I-\mathrm{tors}})$.  We observe that $K(I)$ is in fact a compact generator of $\cD(\Mod_A^{I-\mathrm{tors}})$ (see the proof of Proposition 6.1 of \cite{DwyerGreenlees2002Complete} and the discussion in the last paragraph of page 180 of \cite{GreenleesShipley2013cellularization}), so that $\Loc(K(I)) = \cD(\Mod_A^{I-\mathrm{tors}})$. Finally, the counit $\underline{i} \circ \underline{\Gamma_I^0}(K(I)) \to K(I)$ is clearly an equivalence. Thus, the cellularization principle gives an equivalence $\cD(\Mod_A^{I-\mathrm{tors}}) \simeq \cD_A^{I-\mathrm{tors}}$, as claimed.
\end{proof}
\begin{rem}
  As noted, there are other approaches to this. One other way is to show directly that $\underline{i}$ is fully faithful (see for example \cite[Theorem 1.3]{Positselski2016Dedualizing}), with essential image the full subcategory of $\cal{D}_A$ consisting of those complexes whose homology is $I$-torsion \cite[Corollary 4.32]{PortaShaulYekutieli2014homology}. By \Cref{rem:homology_check} this is precisely the category $\cD_A^{I-\mathrm{tors}}$.
  \end{rem}

We now move onto the completion functor. Here, the algebraic version of completion we use is not $I$-adic completion (which is neither left nor right exact in general) as one may expect, but rather $L_0^I$-completion, which we recall now (for a useful summary, see \cite[Appendix A]{HoveyStrickl1999Morava}). 
\begin{defn}
  Let $L_0^I$ denote the zero-th left derived functor of the (non-exact) $I$-adic completion functor, then $M$ is said to be $L_0^I$-complete if $M \to L^0_I(M)$ is an isomorphism.
\end{defn}
\begin{ex}
    In the simple case where $A = \Z$ and $I = (p)$, Bousfield and Kan defined a notion of $\Ext-p$ completeness by asking that the natural map $M \to \Ext^1_{\Z}(\Z/p^{\infty},M)$ is an isomorphism, or equivalently, that $\Hom_{\Z}(\Z[p^{-1}],M) = \Ext^1_{\Z}(\Z[p^{-1}],M) = 0$. This turns out to be equivalent to asking that $M$ is $L_0^I$ complete.
\end{ex}

For a dg-module $M$, we say that $M$ is $L_0^I$-complete if the underlying graded module is, and let $\Mod_A^{I-\mathrm{comp}}$ denote the full subcategory of $L_0^I$-complete dg-modules. There is an adjunction
  \[
\begin{tikzcd}
L_0^I \colon \Mod_A \arrow[r, shift left] & \Mod_A^{I-\mathrm{comp}} \colon i \arrow[l, shift left]
\end{tikzcd}
  \]
  which is symmetric monoidal, where the monoidal structure on $\Mod_A^{I-\mathrm{comp}} $ is given by $L_0^I(M \otimes_A N)$.

 The subcategory $\Mod_A^{I-\mathrm{comp}}$ of $L_0^I$-complete modules is abelian, but not Grothendieck, as filtered colimits are not exact. Following unpublished notes of Rezk \cite{rezk_charles}, Pol and Williamson \cite[Proposition 7.5]{pol_williamson} showed that $\Mod_A^{I-\mathrm{comp}}$ admits a projective model structure with weak equivalences the quasi-isomorphisms, fibrations degreewise surjections, and cofibrations the subcategory of maps which have the left lifting property with respect to every map which is simultaneously a fibration and a weak equivalence. This model structure is symmetric monoidal, and the above adjunction is a Quillen adjunction \cite[Proposition 7.7]{pol_williamson}, which is symmetric monoidal because $L_0^I$ is monoidal and the unit $A$ is cofibrant.

We let $\cal{D}(\Mod_A^{I-\mathrm{comp}})$ denote the underlying $\infty$-category of $\Mod_A^{I-\mathrm{comp}}$, then there is a symmetric monoidal adjunction of stable $\infty$-categories
\[
\begin{tikzcd}
\underline{L_0^I} \colon \cal{D}_A \arrow[r, shift left] & \cal{D}(\Mod_A^{I-\mathrm{comp}}) \colon \underline{i} \arrow[l, shift left]
\end{tikzcd}
\]
\begin{thm}[Pol--Williamson]\label{thm:completion_monoidal_equiv}
  There is a symmetric monoidal equivalence of $\infty$-categories
  \[
\cal{D}_{A}^{I-\mathrm{comp}} \simeq_{\otimes} \cal{D}(\Mod_A^{I-\mathrm{comp}}).
  \]
\end{thm}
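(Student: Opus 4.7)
The strategy is to deduce the equivalence as an instance of the Pol--Williamson compactly generated localization principle (\Cref{prop:compact_gen_loc}(1)), applied to the symmetric monoidal adjunction $\underline{L_0^I} \colon \cal{D}_A \leftrightarrows \cal{D}(\Mod_A^{I-\mathrm{comp}}) \colon \underline{i}$ with $E = K(I)$. The first step is to identify $\cal{D}_A^{I-\mathrm{comp}}$ with the Bousfield localization $L_{K(I)} \cal{D}_A$. This follows from \Cref{lem:bosfield_localization}: since $\cal{D}_A$ has a single compact generator given by the unit $A$, the set $\cA = \{K(I) \otimes D\}$ of that lemma reduces to $\{K(I)\}$, and hence $\cal{D}_A^{I-\mathrm{comp}} = L_{K(I)} \cal{D}_A$.

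The second step is to exhibit a compact generator $K$ of $\cal{D}_A^{I-\mathrm{comp}}$ such that $\underline{L_0^I}(K)$ is a compact generator of $\cal{D}(\Mod_A^{I-\mathrm{comp}})$, and to verify that the unit $\eta_K \colon K \to \underline{i}\,\underline{L_0^I}(K)$ is an equivalence. The natural candidate is $K = K(I)$ itself. Since $A$ is Noetherian and $I$ is generated by a regular sequence, $K(I)$ is a bounded complex of finitely generated free $A$-modules whose homology is finitely generated and $I$-power torsion, hence automatically $L_0^I$-complete. By \Cref{rem:homology_check} this places $K(I)$ already inside $\cal{D}_A^{I-\mathrm{comp}}$, so under the local duality equivalence $\cal{D}_A^{I-\mathrm{tors}} \simeq \cal{D}_A^{I-\mathrm{comp}}$ of \Cref{thm:hps} the standard compact generator $K(I)$ of the torsion side identifies with $\Lambda^I K(I) \simeq K(I)$, giving a compact generator of the complete side. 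The unit map $K(I) \to \underline{i}\,\underline{L_0^I}(K(I))$ reduces, in each degree, to $A \to \hat A$ tensored with the relevant free summand; since the homology of $K(I)$ is $I$-power torsion, tensoring with $\hat A$ over $A$ acts trivially on $K(I)$ in $\cal{D}_A$, so the unit is an equivalence.

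The main obstacle is verifying that $\underline{L_0^I}(K(I))$ is a compact generator of $\cal{D}(\Mod_A^{I-\mathrm{comp}})$. Compactness should follow from the fact that $K(I)$ is built from finitely many shifts of $A$, combined with the fact that $\underline{L_0^I}$ is a symmetric monoidal left adjoint sending the monoidal unit $A$ to the cofibrant unit $\hat A$, which is compact in $\cal{D}(\Mod_A^{I-\mathrm{comp}})$ by inspection of the projective model structure of \cite{pol_williamson}. Generation is the most technical point: one must show that if $M \in \cal{D}(\Mod_A^{I-\mathrm{comp}})$ satisfies $\Map(\underline{L_0^I}(K(I)), M) \simeq 0$, then $M \simeq 0$. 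This is essentially the dual of the well-known fact that $K(I)$ detects equivalences among $I$-torsion complexes, and should be verified directly from the projective model structure on $\Mod_A^{I-\mathrm{comp}}$.
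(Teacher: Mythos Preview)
Your route via \Cref{prop:compact_gen_loc} is different from the paper's, which is much more direct: the paper simply invokes Rezk's result that the right adjoint $\underline{i}$ is fully faithful with essential image the complexes whose homology is $L_0^I$-complete, and then identifies this essential image with $\cD_A^{I-\mathrm{comp}}$ via \Cref{rem:homology_check}. The symmetric monoidal structure comes along because $\underline{L_0^I}$ is symmetric monoidal. That is the whole proof.

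Your approach is not wrong in outline, but the step you yourself flag as ``the most technical point''---that $\underline{L_0^I}(K(I))$ is a compact generator of $\cD(\Mod_A^{I-\mathrm{comp}})$---is really the entire content of the theorem, and you have not proved it. Saying it ``should be verified directly from the projective model structure'' is not an argument. In fact, the cleanest way to verify it is this: if $M \in \cD(\Mod_A^{I-\mathrm{comp}})$ has $\Hom(\underline{L_0^I}K(I), M[n]) = 0$ for all $n$, then by adjunction $\underline{i}M \in \cD_A$ lies in $\cD_A^{I-\mathrm{loc}}$; but $\underline{i}M$ has $L_0^I$-complete homology, so by \Cref{rem:homology_check} it also lies in $\cD_A^{I-\mathrm{comp}}$, and the intersection of the local and complete subcategories is zero. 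Notice that this argument uses exactly the same input (\Cref{rem:homology_check}) as the paper's proof, so you have not avoided anything---you have merely wrapped the same ingredient inside a more elaborate framework. The paper's direct argument is both shorter and more transparent.
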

\begin{proof}
As shown by Rezk \cite[Theorem 10.2]{rezk_charles}, the counit of the above adjunction is an equivalence (i.e., $\underline {i}$ is a fully-faithful functor and $\underline{L_0^I}$ is a Bousfield localization), with image these complexes whose homology is $L$-complete. The essential image is then precisely $\cD_A^{I-\mathrm{comp}}$, see \Cref{rem:homology_check}. The equivalence is symmetric monoidal because $\underline{L_0^I}$ is a symmetric monoidal functor.
\end{proof}
\subsection{An algebraic geometric description of local objects}
Let $\cX$ be a quasi-compact separated scheme, then we can associate to it the derived $\infty$-category $\cD_{qc}(\cX)$ of quasi-coherent sheaves of $\cal{O}_{\cX}$-modules \cite[Definition 1.3.5.8]{ha}. Given a morphism $f \colon \cX \to \cY$ of quasi-compact separated schemes we can define (derived) pushforward and pullback functors
\[
f_* \colon \cD_{qc}(\cX) \to \cD_{qc}(\cY) \quad \text{ and }\quad f^* \colon \cD_{qc}(\cY) \to \cD_{qc}(\cX)
\]
where the pair $(f^*,f_*)$ are adjoint.

We now continue with the notation as in the previous section, and so we fix a graded Noetherian ring $A$ and a homogeneous ideal $I = (x_1,\ldots,x_n)$. Geometrically, we let $\cal{X} = \Spec(A)$ (the spectrum of homogeneous prime ideals in the graded ring $A$), $\cal{Z} = V(I)$, the closed subset of $\cal{X}$ defined by $I$, and $\cal{U} = \cal{X} - \cal{Z}$. We then have an open immersion $j \colon \cal{U} \to \cal{X}$. We define the $\infty$-category $\cD_{qc}^{\cal{Z}}(\cal{X})$ as the full-subcategory of $\cal{D}_{qc}(\cal{X})$ consisting of those $\cal{F}$ for which $j^*\cal{F} \simeq 0$ in $\cD_{qc}(\cal{U})$.
\begin{lem}\label{lem:torsion_alg_geo}

The equivalence of categories $\cD_{qc}(\cal{X}) \simeq \cD_A$ restricts to an equivalence of $\infty$-categories
  \[
\cD^{\cal{Z}}_{qc}(\cal{X}) \simeq \cD_A^{I-\mathrm{tors}}
  \]
\end{lem}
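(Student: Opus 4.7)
The plan is to reduce the geometric vanishing condition $j^*\cF \simeq 0$ to the algebraic characterization of $I$-torsion objects given in \Cref{rem:ideal_torsion}. Since the equivalence $\cD_{qc}(\cX) \simeq \cD_A$ sends a module $M$ to the associated quasi-coherent sheaf $\widetilde{M}$, all of the work lies in understanding how the open immersion $j \colon \cU \to \cX$ interacts with this equivalence.

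First I would observe that $\cU = \cX \setminus V(I)$ admits an open affine cover by the basic opens $D(x_i) = \Spec(A[x_i^{-1}])$ for $i = 1,\ldots,n$. Because the question of whether $j^*\cF \simeq 0$ in $\cD_{qc}(\cU)$ can be checked locally on this cover, we have
\[
j^*\cF \simeq 0 \quad \Longleftrightarrow \quad \cF|_{D(x_i)} \simeq 0 \text{ for all } i = 1,\ldots,n.
\]
Next, under the equivalence $\cD_{qc}(\cX) \simeq \cD_A$, the restriction functor $(-)|_{D(x_i)} \colon \cD_{qc}(\cX) \to \cD_{qc}(D(x_i)) \simeq \cD_{A[x_i^{-1}]}$ corresponds to the localization $M \mapsto M[x_i^{-1}] \simeq M \otimes_A A[x_i^{-1}]$. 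Thus, if $\cF$ corresponds to $M \in \cD_A$, the vanishing condition becomes $M[x_i^{-1}] \simeq 0$ for each $i$.

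Finally, by \Cref{rem:ideal_torsion}, this latter condition is exactly the characterization of objects $M \in \cD_A^{I-\mathrm{tors}}$. Combining these steps yields the desired equivalence
\[
\cD^{\cal{Z}}_{qc}(\cal{X}) \simeq \cD_A^{I-\mathrm{tors}}
\]
as full subcategories of $\cD_{qc}(\cX) \simeq \cD_A$. The only subtle point is the identification of the restriction functor along an open immersion of affine schemes with algebraic localization at the corresponding element; this is a standard fact about $\cD_{qc}$ of an affine scheme, and once it is invoked the lemma follows immediately from \Cref{rem:ideal_torsion}. I do not anticipate any essential obstacle beyond this routine identification.
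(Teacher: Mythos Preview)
Your proposal is correct and follows essentially the same argument as the paper: cover $\cU$ by the basic opens $D(x_i)=\Spec A[x_i^{-1}]$, translate $j^*\cF\simeq 0$ into $M[x_i^{-1}]\simeq 0$ for all $i$, and invoke \Cref{rem:ideal_torsion}. The paper's proof is just a terser version of what you wrote.
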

\begin{proof}
Observe that $\cal{U}$ can be written as a union of open subschemes of the form $\Spec A[x_i^{-1}]$ for $1 \le i \le n$. Let $\cal{F}$ be in $\cD_{qc}(\cal{X})$ and let $M \in \cD_A$ denote the corresponding complex. Then $\cal{F} \in \cD_{qc}^{\cal{Z}}(\cal{X}) $ if and only if $M \otimes_A A[x_i^{-1}] \simeq M[x_i^{-1}] \simeq 0$ for $1 \le i \le n$ if and only if $M \in \cD_A^{I-\mathrm{tors}}$ (see \Cref{rem:ideal_torsion}).
\end{proof}
Using this, we can given an identification of the local category $\cD_A^{I-\mathrm{loc}}$. We learned that such an approach is possible from \cite[Section 7]{PortaShaulYekutieli2014homology}.
\begin{thm}\label{thm:local}
  Let $\cal{X},\cal{Z}$ and $\cal{U}$ be as above.
  \begin{enumerate}
    \item The functor $j_* \colon \cD_{qc}(\cal{U})  \to \cD_{qc}(\cal{X}) $ is fully-faithful.
    \item There is an equivalence of $\infty$-categories
    \[
\cD_A^{I-\mathrm{loc}} \simeq j_*\cD_{qc}(\cal{U})
    \]
    where the right-hand side denotes the essential image of $j_*$.
  \end{enumerate}
\end{thm}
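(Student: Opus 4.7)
The plan is to exploit that an open immersion $j \colon \cal{U} \to \cal{X}$ of quasi-compact separated schemes gives rise to a recollement-style situation in which $j^*$ is a Bousfield localization with kernel $\cD^{\cal{Z}}_{qc}(\cal{X})$ and fully-faithful right adjoint $j_*$. Combined with \Cref{lem:torsion_alg_geo}, this will identify the essential image of $j_*$ with the right orthogonal to $\cD_A^{I-\mathrm{tors}}$ inside $\cD_A$, which is by definition $\cD_A^{I-\mathrm{loc}}$.

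For part (1), I would verify that the unit map $\id \to j^*j_*$ is an equivalence on $\cD_{qc}(\cal{U})$. Since $\cal{U}$ is covered by the standard open affines $\Spec A[x_i^{-1}]$, one can check this after restricting to each such affine; for any open $V \subseteq \cal{U}$ the sections of $j_*\cal{F}$ over $V$ agree with those of $\cal{F}$ over $V$ because $V$ maps isomorphically to itself under $j$, so $j^*Rj_*\cal{F} \simeq \cal{F}$ in the derived category. Fully-faithfulness of $j_*$ is equivalent to this unit being an equivalence.

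For part (2), given part (1), the composite $j_*j^*$ is a Bousfield localization functor on $\cD_{qc}(\cal{X})$ whose essential image equals that of $j_*$. Its kernel consists of those $\cal{F}$ with $j^*\cal{F} \simeq 0$, i.e.\ by definition $\cD^{\cal{Z}}_{qc}(\cal{X})$, and hence the essential image of $j_*$ is its right orthogonal inside $\cD_{qc}(\cal{X})$. Transporting along the equivalence $\cD_{qc}(\cal{X}) \simeq \cD_A$ and invoking \Cref{lem:torsion_alg_geo} to identify $\cD^{\cal{Z}}_{qc}(\cal{X})$ with $\cD_A^{I-\mathrm{tors}}$, this right orthogonal becomes exactly the subcategory $\cD_A^{I-\mathrm{loc}}$ of \Cref{sec:torsion_completion}.

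The main obstacle is the $\infty$-categorical upgrade of the classical identity $j^*Rj_* \simeq \id$ for an open immersion. Though standard, in the present graded setting one must invoke a compatible framework for $\cD_{qc}$ of $\Spec A$; this can be done affine-locally as sketched, or by appealing to the open--closed recollement of $\cD_{qc}(\cal{X})$ along $(\cal{Z}, \cal{U})$ from the derived algebraic geometry literature, which packages part (1) and the identification of the kernel simultaneously.
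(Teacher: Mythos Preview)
Your approach is essentially the same as the paper's: both identify the essential image of $j_*$ as one half of the semi-orthogonal decomposition whose other half is $\cD^{\cal{Z}}_{qc}(\cal{X}) \simeq \cD_A^{I-\mathrm{tors}}$, invoking \Cref{lem:torsion_alg_geo}. For part (2) the paper computes the \emph{left} orthogonal of the essential image of $j_*$ via the adjunction $\Hom(\cal{F},j_*\cal{H}) \simeq \Hom(j^*\cal{F},\cal{H})$ and then cites the semi-orthogonal decomposition results in \cite[Corollaries 7.1.2.7 and 7.1.2.8]{sag}; you instead identify the kernel of the localization $j_*j^*$ directly and take its right orthogonal, which is the same argument from the dual side. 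For part (1) the paper obtains $j^*j_* \simeq \id$ by applying flat base change to the self-pullback square of $j$, whereas you argue directly on sections over opens of $\cal{U}$; either works, and the base-change formulation scales more cleanly to the $\infty$-categorical setting. One terminological slip: the natural transformation witnessing full faithfulness of $j_*$ is the \emph{counit} $j^*j_* \to \id$, not a unit $\id \to j^*j_*$.
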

\begin{proof}\sloppy
  (1) follows by applying the classical flat base-change theorem (see, for example, \cite[Proposition 3.1.3.1]{neeman_fbc}) to the diagram
  \[
\begin{tikzcd}
  \cal{U} \arrow[d,equal] \ar[r,equal] & \cal{U} \arrow[d,"j"] \\
  \cal{U} \arrow[r,"j"'] & \cal{X},
\end{tikzcd}
  \]
  which is a pull-back because $j$ is an open-embedding. Indeed, it implies that the counit $j^*j_* \to \text{id}$ is an equivalence, so that $j_*$ is fully-faithful as claimed.

  Let us write $\cal{E}$ for the essential image of $j_*$. Let ${}^{\bot}\cal{E}$ denote the left orthogonal to $\cal{E}$, i.e., the full subcategory of $\cD_{qc}(\cal{X})$ on those objects $\cal{F}$ for which  $\Hom_{\cD_{qc}(\cal{X})}(\cal{F},\cal{G}) \simeq 0$ for each $\cal{G} \in \cal{E}$. Such a $\cal{G}$ is by definition of the form $j_*\cal{H}$ for $\cal{H} \in \cD_{qc}(\cal{U})$. The vanishing condition is then equivalent to $\Hom_{\cD_{qc}(\cal{U})}(j^*\cal{F},\cal{H}) \simeq 0$ for each $\cal{H} \in \cD_{qc}(\cal{U})$, which is equivalent to $j^*\cal{F} \simeq 0$. Thus $\cal{F} \in \cD^{\cal{Z}}_{qc}(\cal{X}) \simeq \cD_A^{I-\mathrm{tors}}$ by \Cref{lem:torsion_alg_geo}, and so ${}^{\bot}\cal{E} \simeq \cal{D}^{I-\mathrm{tors}}$. It follows from observations about semi-orthogonal decompositions (in particular, \cite[Corollaries 7.1.2.7 and 7.1.2.8]{sag}) that $\cal{E} \simeq \cD_A^{I-\mathrm{loc}}$ as claimed.
\end{proof}
\subsection{Torsion and complete objects for ring spectra}
We now consider the case where $\cC = \Mod_R$ for a commutative ring spectrum $R$ with $\pi_*R$ Noetherian. Suppose we are given an ideal $I = (x_1,\ldots,x_n) \subseteq \pi_*R$. We first construct natural analogs of the Koszul complexes we constructed for graded rings.

To that end, for $x \in \pi_*R$ we let $\mathbb{K}(x)$ be the fiber of the map $\Sigma^{|x|}R \xr{x} R$, and then define the unstable Koszul complex as
\[
\mathbb{K}(I) = \bigotimes_{i=1}^n \mathbb{K}(x_i).
\]
We then define $\Mod_R^{I-\mathrm{tors}}$ to be the category of torsion objects with respect to the compact object $A = \mathbb{K}(I)$, and so we also obtain categories $\Mod_R^{I-\mathrm{loc}}$ and $\Mod_R^{I-\mathrm{comp}}$.

 We also define $\mathbb{K}_{\infty}(x)$ to be the fiber of $R \to R[1/x]$, and then
\[
\mathbb{K}_{\infty}(I) = \bigotimes_{i=1}^n \mathbb{K}_{\infty}(x_i).
\]
The following is implicit in the proof of \cite[Proposition 9.3]{DwyerGreenleesIyengar2006Duality}.
\begin{prop}\label{prop:completions}
  Suppose that $k$ is a field, $R$ is a coconnective commutative augmented $k$-algebra, and that $\pi_*R$ is Noetherian, such that the augmentation induces an isomorphism $\pi_0R \cong k$. Let $I$ denote the augmentation ideal, then there is a symmetric monoidal equivalence of $\infty$-categories
  \[
\Mod_R^{I-\mathrm{comp}} \simeq_{\otimes} L_k\Mod_R,
  \]
  where $L_k\Mod_R$ is the Bousfield localization of $\Mod_R$ at $k$ in the category of $R$-modules.
\end{prop}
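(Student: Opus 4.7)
The plan is to reduce the proposition to an equality of Bousfield classes $\langle \mathbb{K}(I) \rangle = \langle k \rangle$ in $\Mod_R$. Since $\Mod_R$ is compactly generated by the unit $R$, every localizing subcategory is automatically a tensor ideal, and \Cref{lem:bosfield_localization} (applied with $A = \mathbb{K}(I)$ and $D = R$) delivers a symmetric monoidal equivalence
\[
\Mod_R^{I-\mathrm{comp}} \simeq_{\otimes} L_{\mathbb{K}(I)}\Mod_R.
\]
It therefore suffices to prove $\Loc(\mathbb{K}(I)) = \Loc(k)$ in $\Mod_R$.

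For the inclusion $k \in \Loc(\mathbb{K}(I))$, I observe that each $x_i \in I$ acts as zero on $k$, so the defining fiber sequence for $\mathbb{K}(x_i)$ splits after tensoring with $k$:
\[
\mathbb{K}(x_i) \otimes_R k \simeq \Sigma^{|x_i|} k \oplus \Sigma^{-1} k.
\]
Iterating, $\mathbb{K}(I) \otimes_R k$ is a finite direct sum of shifts of $k$, and so $k$ appears as a retract. As $\Loc(\mathbb{K}(I))$ is a tensor ideal containing $\mathbb{K}(I)$, it contains $\mathbb{K}(I) \otimes_R k$, and hence $k$.

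For the converse $\mathbb{K}(I) \in \Loc(k)$, the strategy is to build $\mathbb{K}(I)$ from $k$ via its homotopy groups. The coconnectivity of $R$ and the description $\mathbb{K}(x_i) \simeq \Sigma^{-1} R/x_i$, together with the fact that tensor products of coconnective $R$-modules remain coconnective, force $\pi_n \mathbb{K}(I) = 0$ for $n \gg 0$. The Noetherian hypothesis on $\pi_*R$ makes each $\pi_n \mathbb{K}(I)$ finitely generated over $\pi_*R$, and the Koszul construction makes each $x_i$ act nilpotently on $\pi_* \mathbb{K}(I)$, so every $\pi_n \mathbb{K}(I)$ is $I$-power torsion. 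Since $\pi_*R / I^m$ is finite-dimensional over $k$ for each $m \geq 1$, any finitely generated $I$-power torsion graded $\pi_*R$-module admits a finite filtration with successive subquotients (shifts of) $k$, and consequently each shifted Eilenberg--MacLane $R$-module on $\pi_n \mathbb{K}(I)$ lies in $\Loc(k)$.

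To assemble $\mathbb{K}(I)$ itself, I would write it as the filtered colimit of its connective covers
\[
\mathbb{K}(I) \simeq \colim_m \tau_{\geq m}\mathbb{K}(I),\qquad m \to -\infty.
\]
Each $\tau_{\geq m} \mathbb{K}(I)$ has only finitely many non-zero homotopy groups (bounded above by coconnectivity, and below by $m$) and is therefore constructed by a finite Postnikov tower from the Eilenberg--MacLane modules above, all of which lie in $\Loc(k)$. Closure of $\Loc(k)$ under extensions yields $\tau_{\geq m}\mathbb{K}(I) \in \Loc(k)$, and closure under filtered colimits then gives $\mathbb{K}(I) \in \Loc(k)$. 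The main subtlety will be precisely that $\pi_*\mathbb{K}(I)$ need not be bounded below, so a single finite Postnikov argument on $\mathbb{K}(I)$ itself cannot terminate; it is the filtered colimit along connective covers, combined with the closure of $\Loc(k)$ under filtered colimits, that is the mechanism resolving this.
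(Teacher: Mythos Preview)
Your overall strategy matches the paper's: both reduce via \Cref{lem:bosfield_localization} to the equality of Bousfield classes $\langle \mathbb{K}(I)\rangle = \langle k\rangle$, and your argument for $k \in \Loc(\mathbb{K}(I))$ (exhibiting $k$ as a retract of $\mathbb{K}(I)\otimes_R k$) is correct and parallel to the paper's (which instead observes $k \simeq \Gamma_I(k) \simeq \mathbb{K}_\infty(I)\otimes_R k$).

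The gap is in the converse inclusion $\mathbb{K}(I)\in\Loc(k)$. The paper notes that $\pi_*\mathbb{K}(I)$ is \emph{finite-dimensional} over $k$---it is a finitely generated $\pi_*R$-module (as $\mathbb{K}(I)$ is perfect) on which $I$ acts nilpotently, and $\pi_*R/I = k$ with $\pi_*R$ Noetherian---and then invokes \cite[Proposition~3.16]{DwyerGreenleesIyengar2006Duality}, which says that any $R$-module with finite-dimensional homotopy lies in the \emph{thick} subcategory generated by $k$. Your Postnikov-tower argument is an attempt to reprove this, and two things go wrong. First, your declared ``main subtlety'' is not one: $\pi_*\mathbb{K}(I)$ is bounded below (indeed finite-dimensional), so the filtered colimit over connective covers is superfluous. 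Second, and more substantively, the heart of the induction---that an $R$-module with homotopy a finite-dimensional $k$-vector space concentrated in a single degree is equivalent, as an $R$-module, to the corresponding Eilenberg--MacLane object restricted along $R\to k$---is exactly where the hypotheses ``$R$ coconnective'' and ``$\pi_0R\cong k$'' are doing work, and you have not justified it. For coconnective $R$ there is no off-the-shelf $t$-structure on $\Mod_R$ with heart $\Mod_k$ and well-behaved Postnikov sections; establishing that the layers of your tower really are built from $k$ is precisely the content of the cited DGI result. Your sketch has the right shape but defers its nontrivial step to an assertion.
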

\begin{proof}
  By \Cref{lem:bosfield_localization} we have $\Mod_R^{I-\mathrm{comp}} \simeq L_{\mathbb{K}(I)}\Mod_R$, so
  it suffices to show that there is an equivalence of Bousfield classes $\langle k \rangle = \langle \mathbb{K}(I) \rangle$, i.e., that for any $M \in \Mod_R$ we have $k \otimes_R M \simeq 0$ if and only if $\mathbb{K}(I)\otimes_R M \simeq 0$. It is clear that $\pi_*\mathbb{K}(I)$ is finite dimensional over $k$, and hence by \cite[Proposition 3.16]{DwyerGreenleesIyengar2006Duality} $\mathbb{K}(I)$ is in the thick subcategory of $R$-modules generated by $k$ (note that it is here where the conditions on $R$ and $k$ are required). This easily implies that if $k \otimes_R M \simeq 0$, then $\mathbb{K}(I) \otimes_R M \simeq 0$.

  For the converse, we first claim that $k$ is in the localizing subcategory generated by $\mathbb{K}(I)$. Indeed, $k \otimes_R \mathbb{K}_{\infty}(I) \simeq \Gamma_I(k) \simeq k$ by \Cref{rem:torsion_functor}, and so $k \in \Loc_R(\mathbb{K}(I))$. Once again, a simple argument now shows that if $\mathbb{K}(I) \otimes_R M \simeq 0$, then $k \otimes_R M \simeq 0$. This completes the proof.
\end{proof}
\section{Equivariant homotopy theory}
In this section we study the stable equivariant category of a compact Lie group $G$. To that end, we let $\Sp_G$ be the symmetric monoidal $\infty$-category of genuine $G$-spectra for $G$ a compact Lie group, see \cite[Section 5]{MathewNaumannNoel2017Nilpotence}, which is based on the model theoretic foundations of Mandell and May \cite{MellMay2002Equivariant}. This is compactly generated by the set $\{G/H_+ \in \Sp_G\}_{H \le G}$ where $H \le G$ is a closed subgroup (we are omitting the suspension from our notation). Moreover, these objects are dualizable by \cite[Corollary II.6.3]{lms}. The category $\Sp_G$ is closed-monoidal, and we will let $F(-,-)$ denote the internal hom object in $G$-spectra.
\subsection{Change of group functors}\label{sec:cog}
There are a variety of functors in use in equivariant homotopy. Here we recall what we need. Details can be found in, for example, \cite{lms} or Appendix A of \cite{HillHopkinsRavenel2016nonexistence} or \cite[Chapter 3]{Schwede2018Global}.
\begin{enumerate}
  \item Any group homomorphism $f \colon H \to G$ induces a symmetric monoidal functor $f^* \colon \Sp_G \to \Sp_H$. If $f$ is the inclusion of a subgroup, then we denote this as $\Res_H^G \colon \Sp_G \to \Sp_H$. Note that if $H$ is the trivial subgroup, then $\Res_{\{e\}}^G$ is in fact a functor $\Sp_G \to \Fun(BW_GK,\Sp)$, where $W_GK = N_GK/K$ is the Weyl group of $K$ inside $G$.
  \item Restriction has a left adjoint, given by induction. Specifically, $\Ind_H^G \colon \Sp_H \to \Sp_G$ is given by $X \mapsto G_+ \wedge_H X$ for $X \in \Sp_H$. 
  \item If $f \colon G \to G/N$ is a quotient map associated to a normal subgroup $N \trianglelefteq G$, then $f^*$ is the inflation functor $\Sp_{G/N} \to \Sp_G$.
  \item The right adjoint to inflation is the categorical fixed point functor $(-)^N \colon \Sp_G \to \Sp_{G/N}$. If $K \le G$ is an arbitrary subgroup, we let $(-)^K \colon \Sp_G \to \Sp_{W_GK}$ denote the composite $(-)^K \circ \Res_{N_GK}^K$.
  \item For a normal subgroup $N \trianglelefteq G$ we have a geometric fixed points functor $\Phi^N \colon \Sp_G \to \Sp_{G/N}$ (see also \Cref{rem:geo_fixed_points} for a direct construction). If $K \le G$ is an arbitrary subgroup, we write $\Phi^K \colon \Sp_G \to \Sp_{W_GK}$ for the composite $\Phi^{N_GK}\circ \Res_{N_GK}^G$. We also let $\phi^K \colon \Sp_G \to \Fun(BW_GK,\Sp)$ denote the composite $\res_{\{ e \}}^{W_GK}\circ \Phi^K$. It is not hard to check that $\phi^K \simeq \Phi^K \circ \res_K^G$, where we again observe that $\Phi^K \colon \Sp_K \to \Sp$ has a residual action by the Weyl group $W_GK$ (see \cite[Remark 3.3.6]{Schwede2018Global}). By \cite[Proposition 3.3.10]{Schwede2018Global} the functors $\{ \phi^K \}$ as $K$ runs through the closed subgroups of $G$ are jointly conservative. These also have the property that
  \begin{equation}\label{eq:fixed_points}
\phi^H(\Sigma^{\infty}_+X) \simeq \Sigma^{\infty}_+(X^H)
  \end{equation}
  for any $G$-space $X$ and that they are symmetric monoidal, colimit preserving functors.

\end{enumerate}
\subsection{Torsion and complete objects for genuine equivariant \texorpdfstring{$G$}{G}-spectra}\label{sec:torsion_spectra}
We now review the construction of the category of free and cofree (or Borel complete) $G$-spectra in the context of torsion and complete objects as studied in \Cref{sec:torsion_completion}.

We recall the definition of a family of subgroups.
\begin{defn}
  A family of closed subgroups is a non-empty collection $\cal{F}$ of closed subgroups of $G$ closed under conjugation and passage to subgroups.
\end{defn}
Associated to $\cal{F}$ are $G$-spaces $E\cal{F}$ and $\widetilde{E}F$ with the properties that
\begin{equation}\label{eq:family_fixed_points}
(E\cal{F})^H = \begin{cases}
  \emptyset &\text{ if }H \not \in \cal{F} \\
  \ast & \text{ if } H \in \cal{F}.
\end{cases}
\quad \text{ and } \quad
(\widetilde{E}\cal{F})^H = \begin{cases}
  S^0 &\text{ if }H \not \in \cal{F} \\
  \ast & \text{ if } H \in \cal{F}.
\end{cases}
\end{equation}
In fact, the $G$-spaces $E\cal{F}$ and $\widetilde E\cal{F}$ are determined up to homotopy by their behavior on fixed points \cite[Theorem 1.9]{Luck2005Survey}.

Associated to these spaces is a cofiber sequence of pointed $G$-spaces
\begin{equation}\label{eq:isotropy}
E\cal{F}_+ \to S^0 \to \widetilde{E}\cal{F}.
\end{equation}
We will also let $E\cal{F}_+$ and $\widetilde{E}\cal{F}$ denote the suspension spectra of the same pointed $G$-space.
\begin{ex}
  \begin{enumerate}
    \item If $\cal{F}_e = \{ \{ e \} \}$, the family consisting only of the trivial subgroup, then a model for $E\cal{F}_e$ is the universal $G$-space $EG$.
    \item If $\cal{F} = \mathrm{All}$, the family of all closed subgroups of $G$, then a model for $E\cal{F}$ is a point.
  \end{enumerate}
\end{ex}
Given a family $\cal{F}$ we let $A_{\cal{F}} = \{ G/H_+ \mid H \in \cal{F} \}$.
\begin{defn}
  A $G$-spectrum $X$ is $\cal{F}-$\emph{torsion} if it is $\cal{A}_{\cal{F}}$-torsion (i.e., in the localizing subcategory of $\Sp_G$ generated by $A_{\cal{F}}$),\footnote{In this case, this is automatically a localizing ideal by the Mackey decomposition formula.} is $\cal{F}$-local if it is $\cal{A}_{\cal{F}}$-local, and is $\cal{F}$-complete if it is $\cal{A}_{\cal{F}}$-complete.
\end{defn}

The situation can be shown diagrammatically as follows.
\begin{equation}
\xymatrix{& \Sp_{G}^{\cal{F}-\text{loc}} \ar@<0.5ex>[d] \ar@{-->}@/^1.5pc/[ddr] \\
& \Sp_{G} \ar@<0.5ex>[u]^{-[A_{\cal{F}}^{-1}]} \ar@<0.5ex>[ld]^{\Gamma_{A_{\cal{F}}}} \ar@<0.5ex>[rd]^{\Lambda^{A_{\cal{F}}}} \\
\Sp_{G}^{\cal{F}-\mathrm{tors}} \ar@<0.5ex>[ru] \ar[rr]_{\sim} \ar@{-->}@/^1.5pc/[ruu] & & \Sp_{G}^{\cal{F}-\mathrm{comp}}. \ar@<0.5ex>[lu]}
\end{equation}
The following is essentially the content of \cite[Section 4]{Greenlees2001Tate}. For finite $G$, see also
\cite[Propositions 6.5 and 6.6]{MathewNaumannNoel2017Nilpotence}.
\begin{prop}\label{prop:completion_identification}
  The $A_{\cal{F}}$-torsion, localization, and completion functors are given by
  \[
  \begin{split}
\Gamma_{A_{\cal{F}}} &\simeq E\cal{F}_+ \otimes -\\
 -[A_{\cal{F}}^{-1}]& \simeq \widetilde{E}\cal{F} \otimes - \\
\Lambda^{A_{\cal{F}}}&\simeq F(E\cal{F}_+,-).
\end{split}
  \]
\end{prop}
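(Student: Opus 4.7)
My proof plan is to establish the three identifications by leveraging the cofiber sequence \eqref{eq:isotropy}, together with the uniqueness of the torsion-local decomposition provided by \Cref{thm:hps}, and then deducing the completion statement via internal adjunction.

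First I would smash the cofiber sequence $E\cal{F}_+ \to S^0 \to \widetilde{E}\cal{F}$ with an arbitrary $G$-spectrum $X$ to obtain
\[
E\cal{F}_+ \otimes X \to X \to \widetilde{E}\cal{F} \otimes X.
\]
It then suffices to check that $E\cal{F}_+ \otimes X$ lies in $\Sp_G^{\cal{F}\text{-tors}}$ and that $\widetilde{E}\cal{F} \otimes X$ lies in $\Sp_G^{\cal{F}\text{-loc}}$, because the torsion–localization sequence of \Cref{thm:hps}(2) is characterized by this property up to unique equivalence. For the first claim, one gives $E\cal{F}$ a $G$-CW structure with cells of the form $G/H \times D^n$ with $H \in \cal{F}$, so that $E\cal{F}_+$ lies in $\Loc(A_{\cal{F}})$; tensoring with $X$ keeps us inside $\Sp_G^{\cal{F}\text{-tors}}$ since this subcategory is a localizing ideal (as noted in the footnote of the definition, via the Mackey decomposition).

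Second, to show that $\widetilde{E}\cal{F} \otimes X$ is $\cal{F}$-local, I must verify that $G/H_+ \otimes \widetilde{E}\cal{F} \otimes X \simeq 0$ for every $H \in \cal{F}$. Using the Wirthmüller isomorphism $G/H_+ \otimes Y \simeq \Ind_H^G \Res_H^G Y$, this reduces to showing $\Res_H^G \widetilde{E}\cal{F} \simeq 0$ in $\Sp_H$. By joint conservativity of the geometric fixed point functors $\{\phi^K\}_{K\le H}$, this reduces to showing the underlying space $(\widetilde{E}\cal{F})^K$ is contractible for every closed subgroup $K$ of $H$; but because $\cal{F}$ is closed under passage to subgroups, every such $K$ belongs to $\cal{F}$, so by \eqref{eq:family_fixed_points} the fixed points are a point. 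This is the only step requiring care, and it is the main (mild) obstacle: one needs the closure of $\cal{F}$ under subgroups together with joint conservativity of fixed-point functors to convert the space-level property \eqref{eq:family_fixed_points} into the spectrum-level acyclicity.

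Combining these, the uniqueness of the torsion–local cofiber sequence forces $\Gamma_{A_{\cal{F}}}(X) \simeq E\cal{F}_+ \otimes X$ and $X[A_{\cal{F}}^{-1}] \simeq \widetilde{E}\cal{F} \otimes X$, establishing the first two formulas. For the completion functor, I invoke the second adjunction of \Cref{thm:hps}(4): for every $X, Y \in \Sp_G$,
\[
\iHom_{\Sp_G}(X, \Lambda^{A_{\cal{F}}} Y) \simeq \iHom_{\Sp_G}(\Gamma_{A_{\cal{F}}} X, Y) \simeq \iHom_{\Sp_G}(E\cal{F}_+ \otimes X, Y) \simeq \iHom_{\Sp_G}(X, F(E\cal{F}_+, Y)),
\]
where the last step uses the tensor–hom adjunction in $\Sp_G$. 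By the Yoneda lemma this gives a natural equivalence $\Lambda^{A_{\cal{F}}} Y \simeq F(E\cal{F}_+, Y)$, completing the proof.
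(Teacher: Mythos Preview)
Your argument is correct and follows essentially the same approach as the paper: identify the isotropy cofiber sequence with the abstract torsion--local decomposition of \Cref{thm:hps}(2), and then obtain $\Lambda^{A_{\cal{F}}}$ via the internal adjunction of \Cref{thm:hps}(4). The only difference is packaging: the paper cites Greenlees for the fact that $\Gamma_{A_{\cal{F}}}(S_G)\simeq E\cal{F}_+$ and then invokes smashing (\Cref{rem:smashing}) to pass to all $X$, whereas you verify directly for each $X$ that $E\cal{F}_+\otimes X$ is torsion (via the $G$-CW structure) and $\widetilde{E}\cal{F}\otimes X$ is local (via geometric fixed points), which is exactly the content of the cited result.
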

\begin{proof}
  For finite $G$ this is \cite[Theorem 8.6]{BarthelHeardValenzuela2018Local}, however the same proof works for a compact Lie group. Indeed, the key observation is due to Greenlees \cite[Section 4]{Greenlees2001Tate}, who shows that $\Gamma_{A_{\cal{F}}}(S_G) = E\cal{F}_+$. Because $\Gamma_{A_{\cal{F}}}$ is smashing, this determines its behavior on all of $\Sp_G$. The identification of $-[A_{\cal{F}}^{-1}]$ then comes from comparing the cofiber sequences of \Cref{thm:hps}(2) and \eqref{eq:isotropy}, while local duality (\Cref{thm:hps}(4)) gives the identification of $\Lambda^{A_{\cal{F}}}$.
\end{proof}
\begin{defn}\label{defn:free_cofree}
  $X$ is said to be free (respectively, cofree) if it is $A_{\cal{F}}$-torsion (respectively, $A_{\cal{F}}$-complete) for the family $\cal{F} = \{ \{ e \} \}$ consisting only of the trivial subgroup.
\end{defn}
The following is \cite[Proposition 6.19]{MathewNaumannNoel2017Nilpotence} in the case when $G$ is a finite group. The same proof works for compact Lie groups, with the exception that we only need to use closed subgroups because $\{ G/H_+ \in \Sp_G\}_{H \le G}$ is a set of generators for $\Sp_G$, where $H \le G$ is a closed subgroup.

\begin{prop}\label{prop:borel_cofree}
  Suppose $X$ is a $G$-spectrum with underlying spectrum with $G$-action $X_u \in \Fun(BG,\Sp)$. Then the following are equivalent:
  \begin{enumerate}
    \item $X$ is cofree, i.e., the natural map $X \to F(EG_+,X)$ is an equivalence in $\Sp_G$.
    \item For each closed subgroup $H \le G$ the map $X^H \to X_u^{hH}$ is an equivalence of spectra.
  \end{enumerate}
\end{prop}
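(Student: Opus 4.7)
My plan is to verify the equivalence by detecting both conditions on fixed points, using that the categorical fixed point functors $(-)^H$ for closed subgroups $H \le G$ are jointly conservative on $\Sp_G$.

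First I would reduce (1) to a pointwise condition. Since $\{G/H_+\}_{H \le G}$ is the stated set of dualizable compact generators of $\Sp_G$ (as $H$ runs over the closed subgroups), a map in $\Sp_G$ is an equivalence if and only if it induces an equivalence on $\mathrm{Map}_{\Sp_G}(G/H_+, -) \simeq (-)^H$ for every closed $H \le G$. Thus (1) is equivalent to asking that $X^H \to F(EG_+, X)^H$ be an equivalence for each closed subgroup $H$.

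Next I would identify $F(EG_+, X)^H$ with the homotopy fixed points $X_u^{hH}$. The key input is that the restriction $\mathrm{Res}_H^G EG$, being a contractible space with free $H$-action, is a model for $EH$. Combining this with the compatibility of $F(-,-)$ with restriction gives
\[
F(EG_+, X)^H \simeq F_H(\mathrm{Res}_H^G EG_+, \mathrm{Res}_H^G X)^H \simeq F_H(EH_+, \mathrm{Res}_H^G X)^H.
\]
The right-hand side is by definition the $H$-homotopy fixed points of the underlying spectrum $X_u$ equipped with its restricted $H$-action, i.e.\ $X_u^{hH}$. Moreover, tracing through the construction, the map $X^H \to F(EG_+, X)^H$ induced by the unit $X \to F(EG_+, X)$ (coming from the collapse $EG_+ \to S^0$) is identified under this equivalence with the canonical comparison map $X^H \to X_u^{hH}$ from (2).

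Combining the two previous steps, $X \to F(EG_+, X)$ is an equivalence in $\Sp_G$ if and only if $X^H \to X_u^{hH}$ is an equivalence of spectra for every closed $H \le G$, which is exactly the desired equivalence of (1) and (2). I do not expect any genuine obstacle here; the only subtle point is making sure that the identification $F(EG_+, X)^H \simeq X_u^{hH}$ is the \emph{natural} one, so that the induced map on $H$-fixed points genuinely agrees with the classical $X^H \to X_u^{hH}$. This is essentially bookkeeping with the adjunctions between restriction, categorical fixed points, and the internal hom, and is the same argument as in \cite[Proposition 6.19]{MathewNaumannNoel2017Nilpotence}, with no real change beyond replacing ``subgroup'' by ``closed subgroup'' in the generating set.
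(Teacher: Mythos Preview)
Your proposal is correct and follows exactly the approach the paper indicates: the paper does not give an independent proof but simply defers to \cite[Proposition 6.19]{MathewNaumannNoel2017Nilpotence}, noting that the only change for compact Lie groups is to restrict to closed subgroups since $\{G/H_+\}_{H \le G \text{ closed}}$ generates $\Sp_G$. You have faithfully unpacked that argument, including the identification $F(EG_+,X)^H \simeq X_u^{hH}$ via $\Res_H^G EG \simeq EH$.
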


We now introduce an alternative model of cofree $G$-spectra. For finite $G$, this is \cite[Proposition 6.17]{MathewNaumannNoel2017Nilpotence} or \cite[Theorem II.2.7]{NikolausScholze2018topological}, where for the latter we use \Cref{prop:borel_cofree} to identify Scholze and Nikolaus' Borel-complete $G$-spectra with cofree spectra. The latter proof generalizes to compact Lie groups.
\begin{prop}\label{prop:cofree_global}
  There are equivalences of symmetric monoidal $\infty$-categories \[
  \Sp_G^{\mathrm{cofree}} \simeq_{\otimes} \Fun(BG,\Sp) \quad \text{ and } \quad \Sp_{G,\Q}^{\mathrm{cofree}} \simeq_{\otimes} \Fun(BG,\Mod_{H\Q}).
  \]
\end{prop}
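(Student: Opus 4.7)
The plan is to construct a symmetric monoidal right adjoint to the forgetful functor $U = \Res^G_{\{e\}} \colon \Sp_G \to \Fun(BG,\Sp)$ and show it restricts to an equivalence onto $\Sp_G^{\mathrm{cofree}}$. Since $U$ is symmetric monoidal, colimit-preserving, and between presentable stable $\infty$-categories, the adjoint functor theorem produces a right adjoint $R \colon \Fun(BG,\Sp) \to \Sp_G$.

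The key computation is on categorical fixed points. For any closed subgroup $H \le G$ and $X \in \Fun(BG,\Sp)$,
\[
R(X)^H \simeq \Map_{\Sp_G}(G/H_+, R(X)) \simeq \Map_{\Fun(BG,\Sp)}(U(G/H_+), X) \simeq X^{hH},
\]
using the standard identification $U(\Sigma^\infty_+ G/H) \simeq G_+ \wedge_H S^0$ in $\Fun(BG,\Sp)$ together with the induction--restriction adjunction along $H \le G$. Specialising to $H = \{e\}$ shows that the counit $UR \to \id$ is an equivalence, so $R$ is fully faithful. Comparing the computation with \Cref{prop:borel_cofree} shows $R(X)$ is cofree for every $X$, so $R$ factors through $\Sp_G^{\mathrm{cofree}}$. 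Conversely, for any cofree $Y$, \Cref{prop:borel_cofree} identifies $Y^H$ with $U(Y)^{hH} = RU(Y)^H$ naturally in $H$, so the unit $Y \to RU(Y)$ is an equivalence on $\Sp_G^{\mathrm{cofree}}$.

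Thus $(U,R)$ restricts to an adjoint equivalence $\Sp_G^{\mathrm{cofree}} \simeq \Fun(BG,\Sp)$, and since $U$ is symmetric monoidal this upgrades to a symmetric monoidal equivalence. The rational statement then follows by running exactly the same argument with $\Mod_{H\Q}$-valued functors, or equivalently by tensoring the above equivalence with $H\Q$ throughout, using that rationalisation is symmetric monoidal and commutes with $F(EG_+,-)$.

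The main obstacle is ensuring the unit $Y \to RU(Y)$ assembles into a natural equivalence on cofree spectra rather than merely a pointwise one: one must check that the restriction maps between the $H$-fixed-point spectra of $Y$ and of $RU(Y)$ agree across all closed $H \le G$. For finite $G$ this is the content of \cite[Theorem II.2.7]{NikolausScholze2018topological}, and the argument carries over to compact Lie $G$ via \Cref{prop:borel_cofree}, which makes the genuine fixed-point information of a cofree object recoverable from the Borel homotopy fixed points on every closed subgroup.
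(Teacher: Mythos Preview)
Your proof is correct and follows essentially the same route as the paper: both use the symmetric monoidal forgetful functor $\Sp_G \to \Fun(BG,\Sp)$, with the paper simply citing the Nikolaus--Scholze argument \cite[Theorem II.2.7]{NikolausScholze2018topological} that you have spelled out directly via the adjunction $(U,R)$ and \Cref{prop:borel_cofree}. Your final paragraph overstates the difficulty, however: the unit $\eta_Y$ is already a single morphism in $\Sp_G$, so once you identify $(\eta_Y)^H$ with the comparison map of \Cref{prop:borel_cofree} (as you essentially do), joint conservativity of the family $\{(-)^H\}_{H \le G \text{ closed}}$ immediately gives that $\eta_Y$ is an equivalence---there is no separate ``assembly'' or naturality-in-$H$ issue to resolve.
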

\begin{proof}
We explain the global case; the rationalized case is identical. We first observe that there is a natural functor $\Sp_G \to \Fun(BG,\Sp)$, see \cite[p.~249]{NikolausScholze2018topological}. Alternatively, this is just the observation that the restriction from $\Sp_G \to \Sp$ naturally lands in $\Fun(BG,\Sp)$.

Using \Cref{prop:completion_identification} the same argument\footnote{To be precise, one needs the analog of the equivalence of (i) and (ii) in Theorem 7.12 of \cite{schwede_equivariant} used in \cite{NikolausScholze2018topological}. This follows, for example, from \cite[Proposition V.3.2]{MellMay2002Equivariant}.} as in \cite[Theorem II.2.7]{NikolausScholze2018topological} shows that the functor $\Sp_G \to \Fun(BG,\Sp)$ factors over $\Lambda^G$ (which is the functor denoted $L$ by Nikolaus--Scholze) and that, moreover, the functor $\Sp_G^{\mathrm{cofree}} \to \Fun(BG,\Sp)$ has an inverse equivalence $B_G \colon \Fun(BG,\Sp) \to \Sp_G^{\mathrm{cofree}}$.  Finally, the equivalence is symmetric monoidal, because the induced functor $\Sp_G^{\mathrm{cofree}} \to \Fun(BG,\Sp)$ is symmetric monoidal.
\end{proof}

\subsection{The category of $G$-spectra at $K$}
We now construct a category of $G$-spectra `at $K$', where $K$ is a closed subgroup of $G$. If $K = \{ e \}$ is the trivial subgroup, then this will just be the category of cofree $G$-spectra, while if $K = G$ itself, then this will be equivalent to the ordinary category of non-equivariant spectra.

\begin{defn}
  For a closed subgroup $K \le G$, let $\cF_{\not \ge K}$ denote the family of closed subgroups $H$ of $G$ such that $K$ is not subconjugate to $H$. This defines a localized category $\Sp_G[A_{\cF_{\not \ge K}}^{-1}]$. Additionally, let $\cF_{\le K}$ denote the family of closed subgroups $H$ that are subconjugate to $K$, and $\cF_{<K}$ the family of proper subgroups subconjugate to $K$.
\end{defn}

If we let $(H)$ denote the conjugacy class of a closed subgroup $H \le G$, and write $(H) \le (K)$ when $H$ is subconjugate to $G$, then we can write
\[
\begin{split}
\cal{F}_{\not \ge K} &= \{ H \le G \mid (H) \not \ge (K) \}\\
\cal{F}_{\le K} &= \{ H \le G \mid (H) \le (K) \}\\
\cal{F}_{< K} &= \{ H \le G \mid (H) \lneq (K) \}.
\end{split}
\]
\begin{rem}
  If $K$ is a closed normal subgroup, then $\Sp_G[A_{\cF_{\not \ge K}}^{-1}]$ is known as the category of $G$-spectra concentrated over $K$, see \cite[Chapter II.9]{lms}.
\end{rem}
\begin{lem}\label{lem:fixed_points}
  The following are equivalent for a $G$-spectrum $X$:
  \begin{enumerate}
    \item $X \in \Sp_G[A_{\cF_{\not \ge K}}^{-1}]$.
    \item $\phi^H(X) \simeq 0$ for all $H \in \cF_{\not \ge K}$.
  \end{enumerate}
\end{lem}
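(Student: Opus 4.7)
The plan is to translate the localization condition into a condition about vanishing of a tensor product, and then use the joint conservativity of geometric fixed points to reduce to a calculation of $\phi^H$ on the space $E\cF_{\not \ge K}$.

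Concretely, setting $\cF = \cF_{\not \ge K}$, by \Cref{prop:completion_identification} applied to $\cF$, the localization $X[A_{\cF}^{-1}]$ is given by $\widetilde{E}\cF \otimes X$, and from the cofiber sequence $E\cF_+ \to S^0 \to \widetilde E\cF$ we see that $X \in \Sp_G[A_{\cF}^{-1}]$ if and only if $E\cF_+ \otimes X \simeq 0$. So the task reduces to showing
\[
E\cF_+ \otimes X \simeq 0 \iff \phi^H(X) \simeq 0 \text{ for all } H \in \cF.
\]

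The next step is to apply the family $\{\phi^H\}_{H \le G}$ of geometric fixed point functors, which is jointly conservative by Schwede \cite[Proposition 3.3.10]{Schwede2018Global}. Since each $\phi^H$ is symmetric monoidal, we have
\[
\phi^H(E\cF_+ \otimes X) \simeq \phi^H(E\cF_+) \otimes \phi^H(X).
\]
Using \eqref{eq:fixed_points} and the fixed-point description \eqref{eq:family_fixed_points} of $E\cF$, one computes
\[
\phi^H(E\cF_+) \simeq \Sigma^{\infty}_+(E\cF)^H \simeq \begin{cases} S &\text{if } H \in \cF, \\ 0 &\text{if } H \notin \cF. \end{cases}
\]

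Putting the pieces together: $\phi^H(E\cF_+ \otimes X)$ automatically vanishes for $H \notin \cF$ because the first tensor factor does, while for $H \in \cF$ it reduces to $\phi^H(X)$. By joint conservativity, $E\cF_+ \otimes X \simeq 0$ is therefore equivalent to $\phi^H(X) \simeq 0$ for all $H \in \cF = \cF_{\not \ge K}$, which is exactly the desired equivalence. There is no serious obstacle; the only point requiring a little care is citing the correct form of joint conservativity and symmetric monoidality of $\phi^H$ in the compact Lie group setting, both of which are in the references already invoked in \Cref{sec:cog}.
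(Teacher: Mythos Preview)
Your proposal is correct and follows essentially the same approach as the paper: both arguments use \Cref{prop:completion_identification} to reformulate condition (1), then apply the symmetric monoidality and joint conservativity of the $\phi^H$ together with the fixed-point computation \eqref{eq:family_fixed_points}. The only cosmetic difference is that for the direction (1)$\Rightarrow$(2) the paper tensors with $\widetilde{E}\cF_{\not\ge K}$ rather than $E\cF_{\not\ge K,+}$, but this amounts to the same thing via the cofiber sequence \eqref{eq:isotropy}.
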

\begin{proof}
  See \cite[Lemma 3.20]{quig_shah} for the finite group case, although the argument holds equally well in the case of compact Lie groups. For the benefit of the reader, we spell the details out.

  If (1) holds, then $X \to \widetilde{E}\cal{F}_{\not \ge K} \otimes X$ is an equivalence by \Cref{prop:completion_identification}. Given that $\phi^H$ is symmetric monoidal, \eqref{eq:fixed_points} and the behavior of fixed points of $\widetilde{E}\cal{F}_{\ge K}$ (see \eqref{eq:family_fixed_points}) show that (2) then must hold. Conversely, suppose that (2) holds. To show that (1) holds, it suffices to show that $X \otimes E\cal{F}_{\not \ge K} \simeq 0$. By \cite[Proposition 3.3.10]{Schwede2018Global} we can test this after applying $\phi^H$, as $H$ runs through the closed subgroups of $G$. We then have
  \[
\phi^H(X \otimes E\cal{F}_{\not \ge K}) \cong \phi^H(X) \otimes \phi^H(E\cal{F}_{\not \ge K}) \cong \phi^H(X) \otimes (E\cal{F}_{\not \ge K})^H
  \]
By assumption (2) and \eqref{eq:family_fixed_points} this is always trivial, as required.
\end{proof}
The following is \cite[Corollary II.9.6]{lms} in the global case, and the rational case follows with an identical argument.
\begin{prop}[Lewis--May--Steinberger]\label{prop:ns}
  Let $G$ be a compact Lie group, then for any closed normal subgroup $N \trianglelefteq G$ categorical fixed points induces equivalences of symmetric monoidal $\infty$-categories
  \[
\Sp_G[A_{\cF_{\not \ge N}}^{-1}] \simeq_{\otimes} \Sp_{G/N} \quad \text{ and } \quad \Sp_{G,\Q}[A_{\cF_{\not \ge N}}^{-1}] \simeq_{\otimes} \Sp_{G/N,\Q}.
  \]
\end{prop}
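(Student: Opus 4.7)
The plan is to show that, on $\cF_{\not \ge N}$-local objects, the categorical $N$-fixed point functor $(-)^N$ agrees with geometric fixed points $\Phi^N$, and then to construct an inverse using inflation composed with the localization functor.

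By \Cref{prop:completion_identification}, the localization functor $L \colon \Sp_G \to \Sp_G[A_{\cF_{\not \ge N}}^{-1}]$ is $X \mapsto \widetilde{E}\cF_{\not \ge N} \otimes X$. The restriction of $\widetilde{E}\cF_{\not \ge N}$ to $N$ is a model for the universal space of the family of proper subgroups of $N$, so the standard description of geometric fixed points gives $\Phi^N(X) \simeq (\widetilde{E}\cF_{\not \ge N} \otimes X)^N$ for all $X \in \Sp_G$. In particular, on the local subcategory this specializes to $\Phi^N(X) \simeq X^N$. Since $\Phi^N$ is symmetric monoidal and $\Phi^N(\widetilde{E}\cF_{\not \ge N}) \simeq S^0$ (as $N \notin \cF_{\not \ge N}$, by \eqref{eq:family_fixed_points} and \eqref{eq:fixed_points}), we also have $\Phi^N \circ L \simeq \Phi^N$; that is, $\Phi^N$ factors through $L$.

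I would then define the candidate inverse $F \coloneqq L \circ \Infl_{G/N}^G \colon \Sp_{G/N} \to \Sp_G[A_{\cF_{\not \ge N}}^{-1}]$, a composite of symmetric monoidal colimit-preserving functors. Using $\Phi^N \circ \Infl \simeq \mathrm{id}_{\Sp_{G/N}}$, a standard property of inflation checked on suspension spectra via \eqref{eq:fixed_points}, together with $\Phi^N \circ L \simeq \Phi^N$, one obtains $(-)^N \circ F \simeq \Phi^N \circ F \simeq \Phi^N \circ \Infl \simeq \mathrm{id}_{\Sp_{G/N}}$, so $F$ is fully faithful. For essential surjectivity, observe that $\Sp_G[A_{\cF_{\not \ge N}}^{-1}]$ is generated as a localizing subcategory by $\{L(G/H_+) \mid N \le H \le G \text{ closed}\}$, since $L$ annihilates $G/H_+$ for $H \not \ge N$. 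For such $H$, $(L(G/H_+))^N \simeq \Phi^N(G/H_+) \simeq \Sigma^{\infty}_+(G/H)^N \simeq (G/N)/(H/N)_+$ using \eqref{eq:fixed_points} and the identification $(G/H)^N = (G/N)/(H/N)$ for $N \le H$, and then $F((G/N)/(H/N)_+) \simeq L(G/H_+)$. Since $F$ preserves colimits, its essential image is colimit-closed and contains the generators, so $F$ is an equivalence, and it is symmetric monoidal as a composite of symmetric monoidal functors.

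The rational version follows by the identical argument, since rationalization is a smashing symmetric monoidal localization commuting with inflation and fixed points. The main obstacle I anticipate is the careful identification $\widetilde{E}\cF_{\not \ge N}|_N \simeq \widetilde{E}\cP$ (with $\cP$ the family of proper subgroups of $N$), which underpins the agreement of $(-)^N$ and $\Phi^N$ on the local subcategory; this is well known in equivariant homotopy theory but deserves explicit verification in the $\infty$-categorical conventions used here.
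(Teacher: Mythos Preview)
Your proposal is correct and follows essentially the same approach as the paper, which simply cites \cite[Corollary II.9.6]{lms} and records that the equivalence is given by $(-)^N$ with inverse given by inflation followed by localization; you have filled in the details of that argument. One minor point: your justification that $\Phi^N(X) \simeq (\widetilde{E}\cF_{\not \ge N} \otimes X)^N$ via restriction to $N$ is slightly off (the tensoring takes place in $\Sp_G$, not $\Sp_N$), but the identity itself is standard and is in fact taken as the definition of $\Phi^N$ in \Cref{rem:geo_fixed_points}, so no harm is done.
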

More specifically, the (non-rationalized) equivalence is given as the composite
\[
\Sp_G[A_{\cF_{\not \ge N}}^{-1}] \subseteq \Sp_G \xr{(-)^N} \Sp_{G/N}
\] with inverse given by inflation followed by the localization.
\begin{rem}\label{rem:geo_fixed_points}
  The geometric fixed points functor $\Phi^N \colon \Sp_G \to \Sp_{G/N}$ is defined as the composite
  \[
\Sp_G \xr{-\otimes \widetilde E \cal{F}_{\not \ge N}} \Sp_G[A_{\cF_{\not \ge N}}^{-1}] \subseteq \Sp_G \xr{(-)^N} \Sp_{G/N}.
  \]
  In general, the above composite makes sense for arbitrary $K \le G$, and defines a functor $\widetilde \Phi^K \colon \Sp_G \to \Sp_{W_GK}$. We claim that $\widetilde \Phi^K \simeq \Phi^K$, where the latter is defined in \Cref{sec:cog}. In order to make the dependence on the group clear, we write $\cal{F}_{\not \ge K}^G = \{ H \le G \mid (H) \not \ge (K) \}$ and
$\cal{F}_{\not \ge K}^{N_GK} = \{ H \le N_GK \mid (H) \not \ge (K) \}$.
The two are related by $\cal{F}_{\not \ge K}^{N_GK} = \cal{F}^G_{\not \ge K} \cap \Sub(N_GK)$, where $\Sub(N_GK)$ is the set of closed subgroups of $N_GK$. It is also then not hard to check using fixed points that $\Res_{N_GK}^G( \widetilde E\cal{F}^{G}_{\not \ge K})$ is a model for $\widetilde E\cal{F}^{N_GK}_{\not \ge K}$.

   To see that the two functors are the same, we first claim that $\Res^G_{N_GK}\colon \Sp_G \to \Sp_{N_GK}$ restricts to a functor $\Res^G_{N_GK} \colon \Sp_G[A_{\cal{F}^G_{\not \ge K}}^{-1}] \to \Sp_{N_G K}[A_{\cal{F}^{N_GK}_{\not \ge K}}^{-1}]$ between the localized categories. Let $M \in \Sp_G[A_{\cal{F}^G_{\not \ge K}}^{-1}]$, then by \Cref{lem:fixed_points} we must show that $\phi^H(\Res^G_{N_GK}M) \simeq 0$ for all $H \in \cal{F}^{N_GK}_{\not \ge K}$. By the definition of $\phi^H$, we have
\[
\begin{split}
  \phi^H(\Res^G_{N_GK}M) & \cong \Phi^H \res^{N_GK}_H\res_{N_GK}^GM \\
  & \cong \Phi^H \res_H^GM \\
  & \cong \phi^H M.
\end{split}
\]
Since $H \in \cal{F}_{\not \ge K}^{N_GK}$ we see that $H \in \cal{F}_{\not \ge K}^G$ as well. By \Cref{lem:fixed_points} and the assumption on $M$, we deduce that $ \phi^H(\Res^G_{N_GK}M) \cong \phi^HM \cong 0$, as required.

   It now follows that the diagram
   \[
\begin{tikzcd}[column sep=5em]
\Sp_G \arrow[r, "-\otimes \widetilde{E}\cal{F}^{G}_{\not \ge K}"] \arrow[d, "\Res^G_{N_GK}"'] & {\Sp_G[A_{\cal{F}^G_{\not \ge K}}^{-1}]} \arrow[r, hook] \arrow[d, "\Res^G_{N_GK}"'] & \Sp_G \arrow[r, "(-)^K"] \arrow[d, "\Res^G_{N_GK}"'] & \Sp_{W_GK} \arrow[d, Rightarrow,no head ] \\
\Sp_{N_GK} \arrow[r, "- \otimes \widetilde E \cal{F}^{N_GK}_{\not \ge K}"]                    & {\Sp_{N_GK}[A_{\cal{F}^{N_GK}_{\not \ge K}}^{-1}]} \arrow[r, hook]                   & \Sp_{N_GK} \arrow[r, "(-)^K"']                       & \Sp_{W_GK}
\end{tikzcd}
   \]
   commutes; the first square commutes by the discussion above, the middle square is clear, and the third square commutes by definition of $(-)^K$. This is precisely the claim that $\widetilde \Phi^K \simeq \Phi^K$.
\end{rem}

As noted in \cite[Remark 3.28]{quig_shah} a set of compact generators for $\Sp_{G}[A_{\cal{F}_{\not \ge K}}^{-1}]$ is given by $\{G/H_+ \otimes \widetilde{E}\cal{F}_{\not \ge K} \mid H \not \in \cal{F}_{\not \ge K} \text{ a closed subgroup}\}$ (this also follows from the fact that the localization is smashing and \Cref{prop:completion_identification}).

\begin{defn}
  Let $\Sp_{G,{\langle K \rangle}}$ denote the localizing subcategory of $\Sp_{G}[A_{\cal{F}_{\not \ge K}}^{-1}]$ generated by $\{G/H_+ \otimes \widetilde{E}\cal{F}_{\not \ge K} \mid H \in \cal{F}_{\le K}\}$, and let $\Sp_G^{\langle K \rangle}$ denote the corresponding complete category.
\end{defn}
Of course, we can make similar definitions in the rational case. Diagrammatically the situation is as follows.
\begin{equation}
\xymatrix{& \Sp_{G}^{\langle K \rangle-\text{loc}} \ar@<0.5ex>[d] \ar@{-->}@/^1.5pc/[ddr] \\
& \Sp_{G}[A_{\cal{F}_{\not \ge K}}^{-1}] \ar@<0.5ex>[u] \ar@<0.5ex>[ld] \ar@<0.5ex>[rd] \\
\Sp_{G,\langle K \rangle} \ar@<0.5ex>[ru] \ar[rr]_{\sim} \ar@{-->}@/^1.5pc/[ruu] & & \Sp_{G}^{\langle K \rangle}. \ar@<0.5ex>[lu]}
\end{equation}
\begin{rem}
  By \cite[Lemma 3.25]{quig_shah}, we could also first localize with respect to the family $\cal{F}_{< K}$ instead of $\cal{F}_{\ge K}$. This follows because $\cal{F}_{<K} = \cal{F}_{\ge K} \cap \cal{F}_{\le K}$.
\end{rem}
\begin{lem}\label{lem:gfp_atk}
  A non-trivial $G$-spectrum $X$ is in $\Sp_{G,{\langle K \rangle}}$ if and only if
  \[
\phi^H(X) =0 \text{ if } H \ne K \quad \text{ and } \quad  \phi^H(X) \ne 0  \text { if } H = K,
  \]
  as $H$ runs through the conjugacy classes of subgroups of $G$. In other words, the geometric isotropy of $X$ is exactly $K$.
\end{lem}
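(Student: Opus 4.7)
The plan is to use that the family of geometric fixed-point functors $\{\phi^H\}$, as $H$ ranges over conjugacy classes of closed subgroups of $G$, is jointly conservative and that each $\phi^H$ is symmetric monoidal and colimit-preserving. This lets us propagate vanishing statements from the generators of $\Sp_{G,\langle K \rangle}$ to arbitrary objects and, conversely, detect membership via a cofiber sequence of family spaces.

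For the forward direction, I would first observe that the inclusion $\Sp_{G,\langle K \rangle} \subseteq \Sp_G[A_{\cal{F}_{\not \ge K}}^{-1}]$ combined with \Cref{lem:fixed_points} gives $\phi^H(X) \simeq 0$ for $(H) \not \ge (K)$. To rule out the remaining case $(H) > (K)$, it suffices by colimit-preservation and monoidality to check vanishing on the generators $G/J_+ \otimes \widetilde E\cal{F}_{\not \ge K}$ with $(J) \le (K)$: here $\phi^H(\widetilde E\cal{F}_{\not \ge K}) \simeq S^0$ because $(H) \ge (K)$, whereas $\phi^H(G/J_+) \simeq \Sigma^{\infty}_+(G/J)^H \simeq 0$ since $(H) > (K) \ge (J)$ prevents $H$ from being subconjugate to $J$. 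Non-vanishing of $\phi^K(X)$ on a non-trivial $X$ is then forced by joint conservativity.

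For the converse, I would assume $\phi^H(X) \simeq 0$ for every $(H) \ne (K)$. In particular $\phi^H(X) \simeq 0$ for $(H) \not \ge (K)$, so \Cref{lem:fixed_points} places $X$ in $\Sp_G[A_{\cal{F}_{\not \ge K}}^{-1}]$, whence $X \simeq X \otimes \widetilde E\cal{F}_{\not \ge K}$. Next, smash $X$ with the cofiber sequence $E\cal{F}_{\le K,+} \to S^0 \to \widetilde E\cal{F}_{\le K}$ and compute $\phi^J(X \otimes \widetilde E\cal{F}_{\le K})$: if $(J) \le (K)$ then $(\widetilde E\cal{F}_{\le K})^J = \ast$, while if $(J) \not \le (K)$ then $(J) \ne (K)$, forcing $\phi^J(X) \simeq 0$ by hypothesis. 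Joint conservativity then yields $X \otimes \widetilde E\cal{F}_{\le K} \simeq 0$ and hence $X \simeq X \otimes E\cal{F}_{\le K,+} \simeq X \otimes \widetilde E\cal{F}_{\not \ge K} \otimes E\cal{F}_{\le K,+}$. Since $E\cal{F}_{\le K,+}$ lies in the localizing subcategory of $\Sp_G$ generated by $\{G/H_+ \mid H \in \cal{F}_{\le K}\}$, the object $\widetilde E\cal{F}_{\not \ge K} \otimes E\cal{F}_{\le K,+}$ lies in $\Sp_{G,\langle K \rangle}$.

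The main obstacle is the final step: concluding that $X \simeq X \otimes \widetilde E\cal{F}_{\not \ge K} \otimes E\cal{F}_{\le K,+}$ itself lies in $\Sp_{G,\langle K \rangle}$ requires this subcategory to be a localizing ideal, not just a localizing subcategory. I expect this to follow from the Mackey decomposition argument alluded to in the footnote after the definition of $\cal{F}$-torsion: tensoring an orbit $G/H_+$ with any other orbit $G/L_+$ decomposes as a wedge of $G/J_+$ with $(J) \le (H)$, which therefore remain in $\cal{F}_{\le K}$, so the chosen set of generators is closed under tensoring with arbitrary cells. With this verified, smashing the containment $\widetilde E\cal{F}_{\not \ge K} \otimes E\cal{F}_{\le K,+} \in \Sp_{G,\langle K \rangle}$ further with $X$ keeps us inside the ideal and completes the proof.
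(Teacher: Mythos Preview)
Your proposal is correct and follows essentially the same strategy as the paper's proof, which is extremely terse: the paper simply invokes \Cref{lem:fixed_points} for the $\cal{F}_{\not\ge K}$-local part and then says ``a similar argument'' handles the $\cal{F}_{\le K}$-torsion part, observing that the union $\{H \mid H \in \cal{F}_{\not\ge K} \text{ or } H \notin \cal{F}_{\le K}\}$ consists of every conjugacy class except $(K)$, with joint conservativity of the $\phi^H$ forcing $\phi^K(X)\ne 0$ when $X$ is nontrivial.

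Your write-up is a faithful unpacking of that ``similar argument''. The forward direction (checking $\phi^H$ vanishes on the generators $G/J_+\otimes\widetilde E\cal{F}_{\not\ge K}$ via $(G/J)^H=\emptyset$ when $(H)>(K)\ge(J)$) and the converse (smashing with the $\cal{F}_{\le K}$ isotropy-separation sequence and killing $X\otimes\widetilde E\cal{F}_{\le K}$ by testing on each $\phi^J$) are exactly what the paper's analogy with \Cref{lem:fixed_points} is gesturing at. Your flagged obstacle---that $\Sp_{G,\langle K\rangle}$ must be a localizing \emph{ideal} so that tensoring $\widetilde E\cal{F}_{\not\ge K}\otimes E\cal{F}_{\le K,+}$ with $X$ stays inside---is real but resolved precisely as you propose and as the paper's footnote on $\cal{F}$-torsion indicates: the isotropy groups of $G/H\times G/L$ are intersections $H\cap gLg^{-1}$, hence subconjugate to $H$, so tensoring a generator with an arbitrary orbit remains in the localizing subcategory. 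There is no gap.
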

\begin{proof}
  We have already seen that $X \in \Sp_G[A_{\cal{F}_{\not \ge K}}^{-1}]$ if and only if $\phi^H(X) \simeq 0$ for all $H \in \cal{F}_{\not \ge K}$. A similar argument shows that $X \in \Sp_{G,\langle K \rangle}$ if and only if $\phi^H(X) \simeq 0$ for the set $\{ H \mid H \in \cal{F}_{\not \ge K}\text{ or } H \not \in \cal{F}_{\le K}\}$. This set contains all the subgroups of $G$ except for $K$. Finally, note that because $X$ is non-trivial, we must have $\phi^K(X) \ne 0$ by \cite[Proposition 3.3.10]{Schwede2018Global}.
  \end{proof}
  \begin{rem}\label{rem:categories}
    The categories $\Sp_G^{\langle K \rangle}$ and $\Sp_{G,\langle K \rangle}$ appear naturally in the work of Ayala--Mazel-Gee--Rozenblyum \cite{1910.14602} and Balchin--Greenlees \cite{1903.02669}. In fact, \Cref{prop:cor_atk} proved below is essentially the identification of the $K$-th stratum of $\Sp_G$, in the sense of Ayala--Mazel-Gee--Rozenblyum, as the category $\Fun(BW_GK,\Sp)$. Such a result is also obtained in \cite[Theorem 5.1.26]{1910.14602}. Using \Cref{lem:gfp_atk} one sees that the rational category $\Sp_{G,\Q,\langle K\rangle}$ also appears in Greenlees' computation of the localizing tensor ideals of $\Sp_{G,\Q}$ \cite{Greenlees2019Balmer}, where it is denoted $G$-spectra$\langle K \rangle$. Greenlees proves that these are precisely the minimal localizing tensor ideals in $\Sp_{G,\Q}$.
  \end{rem}
We let $T \colon \Sp_{G}[A_{\cal{F}_{\not \ge K}}^{-1}] \to \Sp_{W_GK}$ denote the composite
\[
\Sp_G[A_{\cal{F}_{\not \ge K}}^{-1}] \subseteq \Sp_G \xrightarrow{\Res^G_{N_GK}} \Sp_{N_GK} \xr{(-)^K} \Sp_{W_GK}
\]
This is a composite of right adjoints, and so has a left adjoint $F$ given as the composite
\[
\Sp_{W_GK} \xr{\Infl_{W_GK}^{N_GK}} \Sp_{N_GK} \xr{\Ind_{N_GK}^G} \Sp_{G} \xr{ - \otimes \widetilde{E}\cal{F}_{\not \ge K} } \Sp_G[A_{\cal{F}_{\not \ge K}}^{-1}].
\]
Explicitly, the left adjoint takes $L \in \Sp_{W_GK}$ to $(G_+ \otimes_{(N_GK)_+} L)\otimes \widetilde{E}\cal{F}_{\not \ge K}$

\begin{thm}\label{thm:gspectraatk}
  The functor $T \colon \Sp_{G}[A_{\cal{F}_{\not \ge K}}^{-1}] \to \Sp_{W_GK}$ induces equivalences of symmetric monoidal stable $\infty$-categories
  \[
\Sp_{G}^{\langle K \rangle} \simeq_{\otimes} \Sp_{W_GK}^{\mathrm{cofree}} \quad \text{ and } \quad \Sp_{G,\mathbb{Q}}^{\langle K \rangle} \simeq_{\otimes} \Sp_{W_GK,\Q.}^{\mathrm{cofree}}
  \]
\end{thm}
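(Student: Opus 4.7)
The plan is to apply the cellularization principle \Cref{prop:cellularization_principle} to the adjunction $F \dashv T$ constructed just before the statement, and then to transfer the resulting equivalence from the torsion to the complete subcategories via local duality \Cref{thm:hps}. Two computations drive the argument. First, inflation sends $(W_GK)_+$ to $(N_GK/K)_+$ (as an $N_GK$-space), and induction then yields $(G/K)_+$, so
\[
F((W_GK)_+) \;\simeq\; (G/K)_+ \otimes \widetilde E\cF_{\not \ge K}.
\]
Second, using \Cref{rem:geo_fixed_points} to identify $T$ with $\Phi^K$ on $\Sp_G[A_{\cF_{\not \ge K}}^{-1}]$, together with the elementary fact $(G/K)^K = N_GK/K = W_GK$, we obtain
\[
T\bigl((G/K)_+ \otimes \widetilde E\cF_{\not \ge K}\bigr) \;\simeq\; (W_GK)_+.
\]
In addition, among the generators $\{G/H_+ \otimes \widetilde E\cF_{\not \ge K} \mid H \in \cF_{\le K}\}$ of $\Sp_{G,\langle K \rangle}$, every entry with $(H) \lneq (K)$ is actually zero: for such $H$ one has $(K) \not\le (H)$, so $H \in \cF_{\not \ge K}$, and then by \Cref{lem:gfp_atk} the object $G/H_+ \otimes \widetilde E\cF_{\not \ge K}$ vanishes (all of its geometric fixed points vanish). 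Consequently $\Sp_{G,\langle K\rangle}$ is generated by the single compact object $F((W_GK)_+)$.

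With these computations in hand, I would apply \Cref{prop:cellularization_principle}(2) with $L = (G/K)_+ \otimes \widetilde E\cF_{\not \ge K}$: compactness of $L$ in $\Sp_G[A_{\cF_{\not \ge K}}^{-1}]$ and of $T(L) = (W_GK)_+$ in $\Sp_{W_GK}$ is standard, and the counit $FT(L) = F((W_GK)_+) \xrightarrow{\sim} L$ is an equivalence by the computation above. The principle then yields an equivalence of stable $\infty$-categories $\Sp_{W_GK}^{\mathrm{free}} \simeq \Sp_{G,\langle K\rangle}$, and applying local duality (\Cref{thm:hps}(3)) on both sides gives the desired equivalence $\Sp_{W_GK}^{\mathrm{cofree}} \simeq \Sp_G^{\langle K \rangle}$. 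The rational statement follows verbatim, since each step is compatible with rationalization.

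The main obstacle is the symmetric monoidal upgrade. The difficulty is that, in the adjunction $F \dashv T$, it is the \emph{right} adjoint $T$ that is strong symmetric monoidal --- it is the unique symmetric monoidal factorization of $\Phi^K$ through the smashing localization $\Sp_G \to \Sp_G[A_{\cF_{\not \ge K}}^{-1}]$ --- while the left adjoint $F$ is only oplax because induction fails to be monoidal. Consequently \Cref{prop:compact_gen_loc} does not apply in the form most directly analogous to what was used in the cofree case. To handle this, I would verify using \Cref{prop:borel_cofree} that the restriction of $T$ to $\Sp_G^{\langle K\rangle}$ lands inside $\Sp_{W_GK}^{\mathrm{cofree}}$, defining a symmetric monoidal functor $\Sp_G^{\langle K\rangle} \to \Sp_{W_GK}^{\mathrm{cofree}}$, and then check via a diagram chase on the adjunction that it implements the equivalence already produced. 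Alternatively, one could apply \Cref{prop:compact_gen_loc}(1) directly to the symmetric monoidal colimit-preserving functor $\Phi^K \colon \Sp_G \to \Sp_{W_GK}$ with $E = (G/K)_+ \otimes \widetilde E\cF_{\not \ge K}$, identifying $L_E \Sp_G \simeq \Sp_G^{\langle K\rangle}$ and $L_{\Phi^K(E)}\Sp_{W_GK} = L_{(W_GK)_+}\Sp_{W_GK} \simeq \Sp_{W_GK}^{\mathrm{cofree}}$, which sidesteps the issue with $F$.
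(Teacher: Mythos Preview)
Your argument is correct, and the core computations --- $F((W_GK)_+) \simeq (G/K)_+ \otimes \widetilde E\cF_{\not\ge K}$, $T$ of this being $(W_GK)_+$, and the reduction to a single compact generator --- coincide with the paper's.  The paper differs only in packaging: rather than first applying the cellularization principle and then transporting via local duality, it applies \Cref{prop:compact_gen_loc}(2) directly to the adjunction $F \dashv T$ to obtain the equivalence of complete categories in one step.  Your route has the advantage of yielding \Cref{cor:gspectraatkfree} (the torsion/free version) as the primary statement rather than as a corollary.

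Your concern about the symmetric monoidal upgrade is well taken, and in fact pinpoints a subtlety the paper glosses over.  \Cref{prop:compact_gen_loc} is stated for a symmetric monoidal adjunction, which by the paper's conventions means the \emph{left} adjoint is strong monoidal; but $F$ involves $\Ind_{N_GK}^G$, which is not monoidal when $N_GK \ne G$.  Your proposed fixes are both sound.  The second --- working with the genuinely symmetric monoidal left adjoint $\Phi^K \colon \Sp_G \to \Sp_{W_GK}$ and its right adjoint --- is the cleanest: since $\widetilde E\cF_{\not\ge K}$ is idempotent one has $L_{E'}\Sp_G \simeq L_{E'}\bigl(\Sp_G[A_{\cF_{\not\ge K}}^{-1}]\bigr) = \Sp_G^{\langle K\rangle}$, and $\Phi^K$ agrees with $T$ on the localized category by \Cref{rem:geo_fixed_points}, so the same computations verify the hypotheses of \Cref{prop:compact_gen_loc}(1).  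Alternatively, once the (a priori non-monoidal) equivalence is established, the restriction of the strong monoidal functor $T$ to $\Sp_G^{\langle K\rangle}$ supplies the monoidal structure, since an inverse to a symmetric monoidal functor is automatically symmetric monoidal.
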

\begin{proof}
We use the compactly generated localization principle \Cref{prop:compact_gen_loc} applied to the adjunction
\[
\begin{tikzcd}
F \colon \Sp_{W_GK} \arrow[r,shift left=1] & \Sp_{G}[A_{\cal{F}_{\not \ge K}}^{-1}] \arrow[l, shift left=1] \colon T
\end{tikzcd}
\]
Here $T(M) = M^K$ and $F(L) = (G_+ \otimes_{(N_GK)_+} L)\otimes \widetilde{E}\cal{F}_{\ge K}$.

  Note that the category $\Sp_{G}^{\langle K \rangle}$ is compactly generated by the object
  \[
\{ G/H_+ \otimes \widetilde{E}\cal{F}_{\not \ge K} \mid H \not \in \widetilde{E}\cal{F}_{\not \ge K},H \in \cal{F}_{\le K}  \} = \{ G/K_+ \otimes \widetilde{E}\cal{F}_{\not \ge K} \}
\]
For simplicity we let $E'$ denote this object. The category $\Sp_{W_GK}^{\mathrm{cofree}}$ is compactly generated by $(W_GK)_+$. Hence, it suffices to show that $T(E') = (W_GK)_+$ and that $FT(E') \to E'$ is an equivalence.\footnote{Note that the two conditions in part (b) of \Cref{prop:compact_gen_loc}(2) are equivalent in this case.} The second in fact follows from the first condition, as then
\[
FT(E') \simeq (G_+ \otimes_{ (N_GK)_+} (N_GK/K)_+) \otimes \widetilde E\cal{F}_{\ge K} \simeq E',
\]
and one checks using the triangle identities that $FT(E') \to E'$ is indeed an equivalence.

 Finally, for the first equivalence, we argue similar to the proof of Theorem 3.22 of \cite{1910.14602}; we have equivalences
  \begin{align*}
  T(E') = T(G/K_+ \otimes \widetilde{E}\cal{F}_{\ge K})  &= (G/K_+ \otimes \widetilde{E}\cal{F}_{\ge K})^K &  \\
  & \simeq \Phi^K(G/K_+) & [\mathrm{\Cref{rem:geo_fixed_points}}] \\
  & \simeq ((G/K)^K)_+ & [\textrm{Equation } \mathrm{\eqref{eq:fixed_points}}]\\
  & \simeq (W_GK)_+ &%
\end{align*}
where the last step uses that $(G/K)^K = W_GK$ as $W_GK$-spaces.

Thus, the assumptions of \Cref{prop:compact_gen_loc}(2) are satisfied and show that
\[
L_{ (W_GK)_+} \Sp_{W_GK} \simeq_{\otimes} L_{G/K_+ \otimes \widetilde{E}\cal{F}_{\not \ge K}}\Sp_{G}[A_{\cF_{\not \ge K}}^{-1}]
\]
By \Cref{lem:bosfield_localization} this is the statement that
  \[
\Sp_{W_GK}^{\mathrm{cofree}}\simeq_{\otimes}  \Sp_{G}^{\langle K \rangle}
  \]
  as claimed.
  \end{proof}

By local duality, or by a similar argument using the cellularization principle (\Cref{prop:cellularization_principle}(2)), we deduce the following.
\begin{cor}\label{cor:gspectraatkfree}
      The functor $T \colon \Sp_{G}[A_{\cal{F}_{\not \ge K}}^{-1}] \to \Sp_{W_GK}$ induces equivalences of stable $\infty$-categories
  \[
\Sp_{G,\langle K \rangle} \simeq \Sp_{W_GK}^{\mathrm{free}} \quad \text{ and } \Sp_{G,\langle K \rangle,\Q} \simeq \Sp_{W_GK,\Q}^{\mathrm{free}}
  \]
\end{cor}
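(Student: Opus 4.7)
The plan is to apply the cellularization principle (\Cref{prop:cellularization_principle}(2)) to the same adjunction $F \colon \Sp_{W_GK} \leftrightarrows \Sp_G[A_{\cal{F}_{\not \ge K}}^{-1}] \colon T$ used in the proof of \Cref{thm:gspectraatk}. The generator side is already in hand: $\Sp_{G,\langle K \rangle}$ is compactly generated by $E' = G/K_+ \otimes \widetilde{E}\cal{F}_{\not \ge K}$, and $\Sp_{W_GK}^{\mathrm{free}}$ is compactly generated by $(W_GK)_+$, so the natural objects to feed into the principle are $L = E'$ and $T(L) = (W_GK)_+$.

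The two hypotheses of \Cref{prop:cellularization_principle}(2) have essentially been verified inside the proof of \Cref{thm:gspectraatk}: the object $E'$ is compact in $\Sp_G[A_{\cal{F}_{\not \ge K}}^{-1}]$ since it lies in the given set of compact generators, and $T(E') \simeq (W_GK)_+$ is compact in $\Sp_{W_GK}$ (this was shown by the chain of equivalences using \Cref{rem:geo_fixed_points} and \eqref{eq:fixed_points}). The counit $FT(E') \to E'$ was also identified with an equivalence in that same proof via the triangle identities. Applying \Cref{prop:cellularization_principle}(2) therefore produces the equivalence $\Sp_{W_GK}^{(W_GK)_+\!-\mathrm{tors}} \simeq \Sp_G[A_{\cal{F}_{\not \ge K}}^{-1}]^{E'-\mathrm{tors}}$, which by the definitions of $\Sp_{W_GK}^{\mathrm{free}}$ and $\Sp_{G,\langle K \rangle}$ is precisely the desired equivalence. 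The functor realizing it is indeed $T$ restricted appropriately and then postcomposed with the torsion colocalization, matching the ``induced by $T$'' language of the statement. The rational case is formally identical, applying the same cellularization principle to the rationalized adjunction.

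Alternatively, one can bypass \Cref{prop:cellularization_principle} altogether and deduce the result as a direct corollary of \Cref{thm:gspectraatk} using abstract local duality: \Cref{thm:hps}(3) provides a canonical equivalence $\Sp_{G,\langle K \rangle} \simeq \Sp_G^{\langle K \rangle}$ between torsion and complete objects on one side, and $\Sp_{W_GK}^{\mathrm{free}} \simeq \Sp_{W_GK}^{\mathrm{cofree}}$ on the other, and these equivalences are natural with respect to the colimit-preserving functor $T$, so the cofree equivalence of \Cref{thm:gspectraatk} transports to a free one. I expect the only subtlety to be bookkeeping: in either approach the equivalence is not literally the restriction of $T$ (which would not land in the torsion category) but rather the composite with the torsion colocalization $\Gamma_{E'}$, and one should note that $T$ automatically sends $E'$-torsion to $(W_GK)_+$-torsion because it preserves colimits and $T(E') \simeq (W_GK)_+$. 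With this caveat the identification is routine, and the cellularization principle route is the shortest path since the relevant compactness and counit checks are already established.
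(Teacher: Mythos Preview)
Your proposal is correct and matches the paper's own argument exactly: the paper states that the corollary follows ``by local duality, or by a similar argument using the cellularization principle (\Cref{prop:cellularization_principle}(2)),'' and you have spelled out both of these routes, reusing the compactness and counit verifications already established in the proof of \Cref{thm:gspectraatk}.
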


By combining \Cref{thm:gspectraatk} with \Cref{prop:cofree_global} we obtain the following.
\begin{cor}\label{prop:cor_atk}
  Let $G$ be a compact Lie group and $K$ a closed subgroup, then there are equivalences of symmetric monoidal stable $\infty$-categories
  \[
\Sp_{G}^{\langle K \rangle} \simeq_{\otimes} \Fun(BW_GK,\Sp) \quad \text{ and }  \quad \Sp_{G,\Q}^{\langle K \rangle} \simeq_{\otimes} \Fun(BW_GK,\Mod_{H\Q}).
  \]
\end{cor}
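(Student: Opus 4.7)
The statement is an immediate corollary: the plan is simply to compose the two equivalences of symmetric monoidal stable $\infty$-categories that have already been established in the excerpt. Specifically, I will chain \Cref{thm:gspectraatk} and \Cref{prop:cofree_global}.

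First, \Cref{thm:gspectraatk} provides a symmetric monoidal equivalence
\[
\Sp_{G}^{\langle K \rangle} \simeq_{\otimes} \Sp_{W_GK}^{\mathrm{cofree}},
\]
realized by the functor $T = (-)^K \circ \Res^G_{N_GK}$ restricted to the complete subcategory. Next, \Cref{prop:cofree_global} applied to the compact Lie group $W_GK$ gives a symmetric monoidal equivalence
\[
\Sp_{W_GK}^{\mathrm{cofree}} \simeq_{\otimes} \Fun(BW_GK, \Sp).
\]
Composing these two equivalences of symmetric monoidal stable $\infty$-categories produces the first claimed equivalence. The rational case proceeds in the same way by using the rational halves of the same two results, yielding $\Sp_{G,\Q}^{\langle K\rangle} \simeq_{\otimes} \Sp_{W_GK,\Q}^{\mathrm{cofree}} \simeq_{\otimes} \Fun(BW_GK, \Mod_{H\Q})$.

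Since both inputs are already fully proved in the excerpt, there is no genuine obstacle; the only thing to verify is that the composition of symmetric monoidal equivalences is again symmetric monoidal, which is automatic. Consequently, the proof reduces to a single sentence citing the two previous results.
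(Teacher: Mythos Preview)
Your proposal is correct and matches the paper's own proof exactly: the corollary is obtained simply by combining \Cref{thm:gspectraatk} with \Cref{prop:cofree_global}.
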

It is worthwhile commenting on the two extreme cases: if $K = \{ e \}$, the trivial subgroup, then $BW_GK \simeq BG$, $\Sp_G^{\langle \{ e \} \rangle}$ is the category of cofree $G$-spectra, and the above result is just \Cref{prop:cofree_global}. On the other hand, if $K = G$, then $BW_GK \simeq B\{ e \}$, the one point space, and this is just the obvious equivalence between $\Sp$ and $\Fun(B\{ e \},\Sp)$ that holds more generally for any category.
\section{Unipotence}In this section we review the unipotence criterion of Mathew, Naumann, and Noel \cite{MathewNaumannNoel2017Nilpotence}, and give conditions on $E$ that ensure that $\Loc_{E}(BX)$ is unipotent for a connected finite loop space $X$.
\subsection{A unipotence criterion}
Throughout this section we fix a presentable symmetric monoidal stable $\infty$-category $(\cC,\otimes,\unit)$. We recall that there is an adjunction
\begin{equation}\label{eq:adjunction}
\begin{tikzcd}
\Mod_{R} \arrow[r,shift left=1] & \cC  \arrow[l, shift left=1]
\end{tikzcd}
\end{equation}
where $R = \End_{\cC}(\unit)$, the left adjoint is the symmetric monoidal functor given by $ - \otimes_{R} \unit$ and the right adjoint is given by $\Hom_{\cC}(\unit,-)$.

\begin{defn}[{\cite[Definition 7.7]{MathewNaumannNoel2017Nilpotence}}]
  $\cC$ is unipotent if \eqref{eq:adjunction} is a localization, i.e., if the right adjoint is fully faithful.
\end{defn}

For us, the most important result will be the following unipotence criterion \cite[Proposition 7.15]{MathewNaumannNoel2017Nilpotence}.
\begin{prop}\label{prop:unipotence_criterion}
Let $\cC$ be a presentable symmetric monoidal stable $\infty$-category $(\cC,\otimes,\unit)$. Suppose $\cC$ contains an algebra object $A \in \Alg(\cC)$ with the following properties:
  \begin{enumerate}
    \item $A$ is compact and dualizable in $\mathcal{C}$.
    \item $\mathbb{D}A$ is compact and generates $\mathcal{C}$ as a localizing subcategory.
    \item The $\infty$-category $\Mod_{\mathcal{C}}(A)$ is generated by $A$ itself, and $A$ is compact in $\Mod_{\mathcal{C}}(A)$.
    \item The natural map
    \[
\Hom_{\mathcal{C}}(\unit,A) \otimes_R \Hom_{\cal{C}}(\unit,A) \to \Hom_{\cal{C}}(\unit,A \otimes A)
    \]
    is an equivalence, where $R = \End_{\cal{C}}(\unit)$.
  \end{enumerate}
  Then $\cC$ is unipotent. More specifically, the adjunction \eqref{eq:adjunction} gives rise to a symmetric monoidal equivalence of $\infty$-categories
\[
\cC \simeq_{\otimes} L_{A_R}\Mod_R
\]
  where $A_R =  \Hom_{\cC}(\unit,A) \in \Alg(\Mod(R))$ and the Bousfield localization is taken in the category of $R$-modules.
\end{prop}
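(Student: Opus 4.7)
My plan is to prove the equivalent assertion that the counit $FG(X) \to X$ of the adjunction \eqref{eq:adjunction} is an equivalence for every $X \in \cC$, via a descent argument along the algebra $A$ following the strategy of \cite{MathewNaumannNoel2017Nilpotence}. The overall scheme is to first identify the two $A$-module categories on either side, then bootstrap via an Amitsur resolution, and finally read off the Bousfield-localized essential image.

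Step one is to use hypotheses (1), (3), and (4) to produce a symmetric monoidal equivalence $\Mod_{\cC}(A) \simeq_{\otimes} \Mod_{A_R}$. Because $A$ is a compact generator of $\Mod_{\cC}(A)$ by (3), and its endomorphism algebra as an $A$-module in $\cC$ is $\End_{\Mod_{\cC}(A)}(A) \simeq \Hom_{\cC}(\unit, A) = A_R$, the $\infty$-categorical Schwede--Shipley theorem \cite[Theorem 7.1.2.1]{ha} yields the equivalence. An induction using condition (4) upgrades this to the compatibility $G(A^{\otimes n}) \simeq A_R^{\otimes_R n}$, which is the numerical input needed for descent.

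Step two is the Amitsur/Adams resolution. For each $X \in \cC$, consider the cosimplicial object $X \otimes A^{\otimes (\bullet+1)}$, augmented by $X \to \Tot(X \otimes A^{\otimes(\bullet+1)})$. This augmentation is an equivalence on $A$ itself, since the resolution of $A$ admits an extra degeneracy coming from the multiplication on $A$; by dualizability (condition (1)) it is also an equivalence on $\mathbb{D}A$. Using condition (2), the localizing subcategory on which the augmentation is an equivalence equals all of $\cC$, so the resolution converges everywhere. Each term $X \otimes A^{\otimes (n+1)}$ is naturally an $A$-module via its last factor, hence by Step one corresponds to an $A_R$-module whose underlying $R$-module is $G(X) \otimes_R A_R^{\otimes_R n}$. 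Applying $G$ to the convergent Amitsur resolution identifies $G(X)$ with its $A_R$-nilpotent completion in $\Mod_R$, so $G(X) \in L_{A_R}\Mod_R$; tracing through the adjunction then yields the advertised symmetric monoidal equivalence $\cC \simeq_{\otimes} L_{A_R}\Mod_R$.

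The step I expect to be the main obstacle is the convergence/descent argument in Step two: totalizations of cosimplicial objects do not commute with filtered colimits in general, so propagating convergence from $\mathbb{D}A$ to all of $\cC$ as a \emph{localizing} subcategory (rather than merely a thick one) requires care. One needs to leverage the compactness of $\mathbb{D}A$ afforded by (2), together with the specific algebraic structure of the Amitsur cosimplicial object and the Künneth identification of Step one, to interchange the relevant limits and colimits.
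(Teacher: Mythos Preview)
The obstacle you flag in Step two is a genuine gap, not just a technical nuisance, and your proposal does not contain the idea needed to close it. The class of $X\in\cC$ for which the Amitsur augmentation $X \to \Tot(X\otimes A^{\otimes(\bullet+1)})$ is an equivalence is a \emph{thick} subcategory, but it is not in general a \emph{localizing} subcategory, precisely because $\Tot$ need not commute with filtered colimits. So knowing convergence on $\mathbb{D}A$ and invoking hypothesis~(2) is not enough. Compactness of $\mathbb{D}A$ by itself does not help here: it lets you pull $\mathbb{D}A$ out of a colimit in the source, but the target still involves an infinite limit you cannot exchange.

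The missing idea, which is the heart of the argument in \cite{MathewNaumannNoel2017Nilpotence} and is the step the present paper isolates in \Cref{rem:unipotence_cg}, is to use hypotheses~(3) and~(4) to prove that $A$ lies in the \emph{thick} subcategory of $\cC$ generated by $\unit$. Your Step~one (the Schwede--Shipley identification $\Mod_{\cC}(A)\simeq\Mod_{A_R}$ together with the K\"unneth condition~(4)) is exactly the input for this: it lets one transport the compactness of $A$ as an $A$-module into a finiteness statement about $A$ relative to $\unit$. Once $A\in\Thick_{\cC}(\unit)$, the Amitsur tower is pro-constant (equivalently, $A$ is descendable in the sense of Mathew), so $\Tot$ reduces to a finite limit and your localizing-subcategory argument goes through. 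The paper itself does not give a direct proof of this proposition but cites \cite[Proposition~7.15]{MathewNaumannNoel2017Nilpotence}; in \Cref{rem:unipotence_cg} it then observes that, granting the reduction to $A\in\Thick_{\cC}(\unit)$, one can bypass the Amitsur resolution entirely and instead apply the compactly generated localization principle (\Cref{prop:compact_gen_loc}) to the adjunction $(F,G)$. Either route works, but both require the thick-subcategory step that your outline omits.
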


\begin{rem}\label{rem:unipotence_cg}
  We now show how to recover the unipotence criterion \Cref{prop:unipotence_criterion} from the compactly generated localization principle \Cref{prop:compact_gen_loc}. In fact, the proof of the unipotence criterion uses \cite[Proposition 7.13]{MathewNaumannNoel2017Nilpotence}, so we assume the existence of a commutative algebra object $A$ satisfying the following:
\begin{enumerate}
  \item $A$ is compact and dualizable in $\cC$.
  \item $\mathbb{D}A$ generates $\cC$ as a localizing subcategory.
  \item $A$ belongs to the thick subcategory generated by the unit.
\end{enumerate}
Assuming these three conditions, we show how to use the compactly generated localization principle to deduce that $\cC \simeq_{\otimes} L_{A_R}\Mod_R$.

We will apply \Cref{prop:compact_gen_loc} to the adjunction
\[
\begin{tikzcd}
F \colon \Mod_{R} \arrow[r,shift left=1] & \cC \colon G \arrow[l, shift left=1]
\end{tikzcd}
\]
where $R = \End_{\cC}(\unit)$, the left adjoint is the symmetric monoidal functor given by $F= - \otimes_{R} \unit$ and the right adjoint is given by $G=\Hom_{\cC}(\unit,-)$.

   We let $E=A_R = G(A) \simeq \Hom_{\cC}(\unit,A)$, then the counit $A_R \to GF(A_R)$ is an equivalence. Indeed, $GF(A_R) \simeq GFG(A) \simeq G(A) \simeq A_R$, see the first paragraph of the proof of \cite[Proposition 7.13]{MathewNaumannNoel2017Nilpotence} (which uses assumption (3)), and the counit $A_R \to GF(A_R)$ is then the identity map by the triangle identities. Moreover, $L_{F(A_R)}\cC \simeq L_A{\cC} \simeq \cC$ by the second paragraph of the proof, which uses assumption (2).

   By \cite[Proposition 2.27]{MathewNaumannNoel2017Nilpotence} $\mathbb{D}A_R$ is a compact generator for $L_{A_R}$ (this uses assumption (1)), and $F(\mathbb{D}A_R) \simeq \mathbb{D}A$ is a compact generator of $L_{F(A_R)}\cC \simeq \cC$ by assumption (2). Thus, applying \Cref{prop:compact_gen_loc} we deduce that there is an equivalence of symmetric monoidal stable $\infty$-categories
   \[
L_{A_R}\Mod_R \simeq_{\otimes} \cC
   \]
   as claimed by the unipotence criterion.
\end{rem}
\subsection{Unipotence for local systems}
We begin be recalling the definition of local systems on a space.
\begin{defn}
  Let $E$ be a commutating ring spectrum, then for $Y$ a connected space, we let $\Loc_{E}(Y) = \Fun(Y,\Mod(E))$ be the $\infty$-category of $E$-valued local systems on $Y$. This is a presentable symmetric monoidal stable $\infty$-category, where the monoidal structure is given by the pointwise tensor product.
\end{defn}
\begin{rem}
  With $E$ and $Y$ as above we also define spectra
  \[
  C^*(Y;E) = F(\Sigma^{\infty}_+Y,E) \quad \text{ and } \quad C_*(Y;E) = \Sigma^{\infty}_+ Y \otimes E.
  \]
  Note that because $E$ is a commutative ring spectrum, so is $C^*(Y;E)$, via the diagonal map.  If $E = H\Q$, we will simply write $C^*(Y;\Q)$ and $C_*(Y;\Q)$. 
\end{rem}
We will usually be interested in the case where $E = H\Q$, but there is no harm in working more generally for now.

Let $e \colon \ast \to Y$ correspond to a choice of base-point for the connected space $Y$. By the adjoint functor theorem, the symmetric monoidal pullback functor $e^* \colon \Loc_E(Y) \to \Loc_E(\ast) \simeq \Mod_E$ has a left and right adjoint, denoted $e_!$ and $e_*$ respectively (these are given by left and right Kan extension along $e$, respectively, see \cite[Section 4.3.3]{Lurie2009Higher}). The following is a special case of \cite[Lemma 4.3.8]{ambidexterity} (recall that we assume $Y$ connected).
\begin{lem}\label{lem:generated}
  The $\infty$-category $\Loc_{E}(Y)$ is generated under colimits by $e_!(E)$.
\end{lem}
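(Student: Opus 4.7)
The plan is to prove this via the standard criterion: a single object $X$ generates a presentable stable $\infty$-category $\cC$ under colimits if and only if the functor $\Map_{\cC}(X,-)$ is conservative, i.e., the right orthogonal of $X$ is zero. So it suffices to show that for $F \in \Loc_E(Y)$, if $\Map_{\Loc_E(Y)}(e_!(E), F) \simeq 0$, then $F \simeq 0$.

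First, I would use the $(e_!, e^*)$ adjunction to rewrite
\[
\Map_{\Loc_E(Y)}(e_!(E), F) \simeq \Map_{\Mod_E}(E, e^*F) \simeq \Omega^\infty(e^*F),
\]
where in the second step I use that $E$ is the unit of $\Mod_E$ so $\Map_{\Mod_E}(E,-)$ is just the underlying spectrum functor. (More precisely, the corresponding mapping spectrum identifies with $e^*F$ itself.) Thus the hypothesis $\Map_{\Loc_E(Y)}(e_!(E), F) \simeq 0$ translates into $e^*F \simeq 0$ in $\Mod_E$.

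Next, I would argue that $e^* \colon \Loc_E(Y) \to \Mod_E$ is conservative because $Y$ is connected. The key point is that evaluation at an arbitrary point $y^* \colon \Loc_E(Y) \to \Mod_E$ for each $y \in Y$ assembles into a jointly conservative family (since $Y$, viewed as an $\infty$-groupoid, is the colimit of its points, so a natural transformation in $\Fun(Y,\Mod_E)$ is an equivalence iff it is one at every object of $Y$). For any $y \in Y$, a choice of path from $e$ to $y$ in $Y$ gives a natural equivalence $e^*F \simeq y^*F$, so $e^*F \simeq 0$ forces $y^*F \simeq 0$ for every $y \in Y$, whence $F \simeq 0$.

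Combining these two steps yields the desired conclusion. I do not anticipate a real obstacle here; the only thing to be careful about is distinguishing the mapping spectrum from the mapping space and making sure the conservativity argument correctly uses connectedness of $Y$ (which is exactly where the hypothesis enters, since for a disconnected $Y$ one would need one generator per connected component).
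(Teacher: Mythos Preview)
Your argument is correct. The paper does not actually prove this lemma; it simply records it as a special case of \cite[Lemma 4.3.8]{ambidexterity} (Hopkins--Lurie), noting the standing hypothesis that $Y$ is connected. Your direct proof via conservativity of $e^*$ is exactly the standard way one unwinds that reference in this situation, so there is no substantive difference in content---you have just made explicit what the citation hides. One small point of hygiene: your phrasing initially uses the mapping \emph{space} $\Map(e_!(E),F)$, whose contractibility alone would not suffice (it only sees nonnegative homotopy groups), but you immediately correct this by passing to the mapping spectrum, which indeed identifies with $e^*F$; it would be cleaner to work with the mapping spectrum from the outset.
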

\begin{rem}\label{rem:adjoints}
  Suppose more generally that $f \colon X \to Y$ is a map of connected spaces, then there is a symmetric monoidal pull-back functor $f^* \colon \Loc_E(Y) \to \Loc_E(X)$, which, by the adjoint functor theorem, has a left and right adjoint, denoted $f_!$ and $f_*$.
\end{rem}
We now introduce the class of spaces we are most interested in.
\begin{defn}
  A connected finite loop space is a triple $(X,BX,e)$ where $X$ is a connected finite $CW$-complex, $BX$ is a pointed space, and $e \colon X \to \Omega BX$ is an equivalence.
\end{defn}

We will often just refer to the finite loop space as $X$. To apply the unipotence criteria we need to discuss the relevance of the Eilenberg--Moore spectral sequence. We recall the definition from \cite{MathewNaumannNoel2017Nilpotence} here.
\begin{defn}
  Let $Y$ be a space and $E$ a commutative ring spectrum. We say that the $E$-based Eilenberg--Moore spectral sequence (EMSS) is relevant for $Y$ if the square
  \[
\begin{tikzcd}
  C^*(Y;E) \ar[r] \ar[d] & \ar[d]E\\
  E \ar[r] & C^*(\Omega Y;E)
\end{tikzcd}
  \]
  is a pushout of commutative ring spectra, i.e., the induced map $E \otimes_{C^*(Y;E)}E \to C^*(\Omega Y;E)$ is an equivalence.
 \end{defn}

Finally, we need the following, which is a special case of a definition in \cite[Section 8.11]{DwyerGreenleesIyengar2006Duality}.
\begin{defn}
  We say that $C^*(Y;E)$ is a Poincar\'e duality algebra if there exists an $a$ such that $C^*(Y;E) \to \Sigma^a\Hom_E(C^*(Y;E);E)$ is an equivalence. In the case that $R = Hk$ for a field $k$, then $C^*(Y;k)$ satisfies Poincar\'e duality if and if $H^*(Y;E)$ satisfies algebraic Poincar\'e duality.
\end{defn}
Now suppose that $Y$ is a finite $CW$-complex, then we have
\[
\begin{split}
\Hom_E(C^*(Y;E);E) &\simeq \Hom_E(C^*(Y;S) \otimes E;E)\\
& \simeq \Hom(C^*(Y;S);E)\\
& \simeq \Hom(C^*(Y;S);S) \otimes E \\
&\simeq C_*(Y;S) \otimes E\\
&\simeq C_*(Y;E).
\end{split}
\]

With these preliminaries in mind, we now have the following, which is strongly inspired by the closely related result \cite[Theorem 7.29]{MathewNaumannNoel2017Nilpotence}.
\begin{rem}
  We recall that given a commutative ring spectrum $R$, we can form Bousfield localization in the category of $R$-modules, see for example \cite[Chapter VIII]{ElmendorfKrizMellMay1997Rings}. In particular, if $E$ is an $R$-module, then $M \in \Mod_R$ is $E$-acyclic if $E \otimes_R W \simeq \ast$, and a map $f \colon S \to T$ of $R$-modules is an $E$-equivalence if $\text{id} \otimes_R f \colon E \otimes_R M \to E \otimes_R N$ is a weak equivalence.  Then, there is always a localization of $M \in \Mod_R$, i.e., a map $\lambda \colon M \to L_EM$ such that $\lambda$ is an $E$-equivalence, and $M_E$ is $E$-local, i.e., $F_E(W,M_E) = 0$ for any $E$-acyclic $R$-module $W$. 

  For the following, we apply this in the case $R = C^*(BX;E)$ with $E$, considered as an $R$-module via the natural augmentation $C^*(BX;E) \to E$.  
\end{rem}
\begin{thm}\label{prop:mnn_unipotence}
  Let $X$ be a connected finite loop space and $E$ a commutative ring spectrum. Suppose that $C^*(X;E)$ is a Poincar\'e duality algebra, then $\Loc_{E}(BX)$ is unipotent if and only if the $E$-based Eilenberg--Moore spectral sequence for $BX$ is relevant. Moreover, if this holds then there is a symmetric monoidal equivalence of $\infty$-categories
  \[
\Loc_{E}(BX) \simeq_{\otimes} L_{E}\Mod_{C^*(BX,E)}
  \]
  where the Bousfield localization is taken in the category of $C^*(BX;E)$-modules.
\end{thm}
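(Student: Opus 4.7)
The plan is to apply the unipotence criterion, \Cref{prop:unipotence_criterion}, to $\cC = \Loc_{E}(BX)$. First I would identify the unit $\unit = p^*E$ (for $p \colon BX \to \ast$) and compute $R = \End_\cC(\unit) \simeq p_*p^*E \simeq C^*(BX;E)$, so that the conclusion of the criterion produces precisely the desired localization $\cC \simeq_{\otimes} L_{A_R} \Mod_R$ once we exhibit an algebra $A$ with $A_R := \Hom_\cC(\unit, A) \simeq E$.

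The natural candidate is $A = e_*E$, where $e \colon \ast \to BX$ is the basepoint, equipped with its canonical commutative algebra structure coming from the lax symmetric monoidal functor $e_*$. The $(e^*, e_*)$-adjunction gives $\Hom_\cC(p^*E, e_*E) \simeq \Hom_{\Mod_E}(E, E) \simeq E$, so $A_R \simeq E$ as required. The Poincar\'e duality hypothesis enters in verifying conditions (1)--(3): it should lift to an ambidexterity equivalence $e_*E \simeq \Sigma^{-a}e_!E$ of local systems on $BX$, whence $A$ inherits compactness from $e_!E$ (itself compact since $e_!$ preserves compact objects, being the left adjoint of the cocontinuous functor $e^*$). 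Base change of duals then yields $\mathbb{D}(e_!E) \simeq e_*E$, so $\mathbb{D}A \simeq e_!E$ is a compact generator of $\cC$ by \Cref{lem:generated}. Finally, $A$ is compact in $\Mod_\cC(A)$ and generates it by standard arguments using that the forgetful functor $\Mod_\cC(A) \to \cC$ is conservative and preserves filtered colimits.

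The main content is condition (4), which is where the EMSS enters directly. Using dualizability of $A$ together with the $(e_!, e^*)$-adjunction and proper base change along the homotopy pullback square with fiber $\Omega BX \simeq X$, I would compute
\[
\Hom_\cC(\unit, A \otimes A) \simeq \Hom_\cC(\mathbb{D}A, A) \simeq \Hom_\cC(e_!E, e_*E) \simeq e^*e_*E \simeq C^*(X;E).
\]
Under this identification the comparison map of condition~(4) becomes precisely the canonical map $E \otimes_{C^*(BX;E)} E \to C^*(X;E)$. Hence condition (4) holds if and only if the $E$-based EMSS for $BX$ is relevant, which provides both directions of the iff. Applying the unipotence criterion then yields the symmetric monoidal equivalence $\Loc_E(BX) \simeq_{\otimes} L_E\Mod_{C^*(BX;E)}$.

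I expect the main technical obstacle to be the ambidexterity equivalence $e_*E \simeq \Sigma^{-a}e_!E$. Algebraic Poincar\'e duality of $C^*(X;E)$ readily gives a stalkwise identification, but upgrading this to an equivalence of local systems on $BX$---equivariant for the natural $\Omega BX$-action---is an Atiyah-duality-type statement along the map $e$ whose fiber is a PD space, and it requires more care than a pointwise check.
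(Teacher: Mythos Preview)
Your approach is essentially the same as the paper's: apply the unipotence criterion \Cref{prop:unipotence_criterion} to the algebra $A = e_*E \simeq C^*(X;E)$ in $\cC = \Loc_E(BX)$, identify $R \simeq C^*(BX;E)$ and $A_R \simeq E$, and verify conditions (1)--(4). Your computation for condition~(4) is in fact the content of \cite[Proposition 7.28]{MathewNaumannNoel2017Nilpotence}, which the paper simply cites; your version is more explicit and correct. For condition~(3) the paper is sharper: rather than ``standard arguments'' it invokes the Barr--Beck-type result \cite[Proposition 5.29]{MathewNaumannNoel2017Nilpotence} to identify $\Mod_\cC(A) \simeq \Mod_E$ directly, using that $(e^*,e_*)$ satisfies the projection formula, $e_*$ is conservative, and $e_*$ commutes with colimits (the last because $e_*$ agrees with $e_!$ up to a shift).

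There is, however, a genuine gap in your converse direction. You write that condition~(4) holds if and only if the EMSS is relevant, ``which provides both directions of the iff.'' It does not: the unipotence criterion is only a \emph{sufficient} condition, so establishing (4) $\Leftrightarrow$ (EMSS relevant) together with (1)--(3) yields only (EMSS relevant) $\Rightarrow$ unipotent. To go the other way you must show that unipotence forces condition~(4) to hold. The paper does this by invoking \cite[Corollary 7.19]{MathewNaumannNoel2017Nilpotence}: in any unipotent $\cC$, the K\"unneth map $\Hom_\cC(\unit,A)\otimes_R\Hom_\cC(\unit,A)\to\Hom_\cC(\unit,A\otimes A)$ is an equivalence whenever $A$ is compact. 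Since compactness of $A$ was already established in step~(1) via Poincar\'e duality, this closes the loop.

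Your caution about the ambidexterity equivalence $e_*E \simeq \Sigma^{-a}e_!E$ is well placed; the paper is equally terse on this point. One partial simplification: the identity $\mathbb{D}(e_!E) \simeq e_*E$ holds unconditionally by adjunction (so $\mathbb{D}A \simeq e_!E$ once you know $e_!E$ is dualizable), and what remains is only the compactness of $e_*E$ in $\cC$. A map $\Sigma^a e_!E \to e_*E$ in $\cC$ is the same datum as a class in $H^a(X;E)$, and it is an equivalence in $\cC$ iff it is one on fibers; so the issue reduces to producing a fundamental class whose associated cap-product map realizes the assumed Poincar\'e duality.
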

\begin{proof}
  We first show that if the $E$-based EMSS for $X$ is relevant, then $\Loc_E(BX)$ is unipotent.  To do this, we will apply the unipotence criteria of Mathew--Naumann--Noel given in \Cref{prop:unipotence_criterion} to the commutative algebra object $A = C^*(X;E)$ in $\cC = \Loc_{E}(BX)$. Throughout, we let $p \colon BX \to \ast$ and $e \colon \ast \to BX$ denote the canonical maps.

Note that $A = e_*(E)$, and that $e_!(E) \simeq C_*(X;E)$. Moreover, the functor $e_!$ preserves compact objects (as its right adjoint $e^*$ preserves small colimits), and so we deduce that we deduce that $C_*(X;E)$ is compact in $\cC$.

We also have
  \[
R = \End_{\cC}(\unit) \simeq \Hom_{\cC}(p^*(E),p^*(E)) \simeq \Hom_{E}(E,p_*p^*(E)) \simeq p_*p^*(E) \simeq C^*(BX;E)
  \]
  and
  \[
A_R = \Hom_{\cC}(\unit,A) \simeq \Hom_{\cC}(\unit,e_*(E)) \simeq \Hom_{E}(e^*(\unit),E) \simeq E.
  \]
      We now show that the (4) conditions of \Cref{prop:unipotence_criterion} are satisfied, which will imply that $\Loc_E(BX)$ is unipotent.
    \begin{enumerate}
      \item Because $e_!(E) \simeq C_*(X;E)$ is compact in $\cC$, the assumption that $C^*(X;E)$ is a Poincar\'e duality algebra implies that $C^*(X;E)$ is also compact.
      \item $\mathbb{D}A \simeq C_*(X;E) \simeq e_!(E)$, and hence is compact. That $\cC$ is compactly generated by $\mathbb{D}A$ follows from \Cref{lem:generated}.
      \item Consider the adjoint pair $(e^*,e_*)$. The left adjoint is given by forgetting the basepoint, and the right adjoint takes $M \in \Mod_E$ to $C^*(X;M)$. Using this, one sees that the projection formula holds, i.e., that
      \[
e_*(M) \otimes N \to e_*(M \otimes e^*(N))
      \]
      is an equivalence for $N \in \cC$ and $M \in \Mod_{E}$. Because $e_!$ and $e_*$ agree up to a shift, $e_*$ commutes with arbitrary colimits. Finally $e_*$ is conservative. We can now apply \cite[Proposition 5.29]{MathewNaumannNoel2017Nilpotence}, which shows the adjunction $(e^*,e_*)$ gives rise to an equivalence of $\infty$-categories $\Mod_{\cC}(A) \simeq \Mod_{E}$, which implies the result because $e_*(E) \simeq C^*(X;E) = A$.
      \item By assumption the $E$-based EMSS for $BX$ is relevant, and hence by \cite[Proposition 7.28]{MathewNaumannNoel2017Nilpotence} the natural map
    \[
\Hom_{\mathcal{C}}(\unit,A) \otimes_R \Hom_{\cal{C}}(\unit,A) \to \Hom_{\cal{C}}(\unit,A \otimes A)
    \]
    is an equivalence.
    \end{enumerate}
Conversely, assume that $\Loc_E(BX)$ is unipotent. By \cite[Corollary 7.19]{MathewNaumannNoel2017Nilpotence} the natural map
    \[
\Hom_{\mathcal{C}}(\unit,A) \otimes_R \Hom_{\cal{C}}(\unit,A) \to \Hom_{\cal{C}}(\unit,A \otimes A)
    \]
    is an equivalence, because $A$ is compact in $\cC$ by (1) above. It follows from \cite[Proposition 7.28]{MathewNaumannNoel2017Nilpotence} that the $E$-based EMSS for $BX$ is relevant.
\end{proof}

\section{Rational cochains and algebraic models}
We now put the results of the previous sections together and construct an algebraic model for $\Loc_{H\Q}(X)$ for a connected finite loop space $X$.

\begin{prop}\label{prop:unipotence_loops}
Let $X$ be a connected finite loop space, then there is a symmetric monoidal equivalence of $\infty$-categories
  \[
\Loc_{H\Q}(BX) \simeq_{\otimes} L_{H\Q}\Mod_{C^*(BX,\Q)}
  \]
  where the Bousfield localization is taken in the category of $C^*(BX;\Q)$-modules.  \end{prop}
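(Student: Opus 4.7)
The strategy is to apply \Cref{prop:mnn_unipotence} directly with $E = H\Q$ and $Y = BX$, so the proof reduces to verifying its two hypotheses: that $C^*(X;\Q)$ is a Poincar\'e duality algebra, and that the $H\Q$-based Eilenberg--Moore spectral sequence for $BX$ is relevant. Once these are in place, \Cref{prop:mnn_unipotence} delivers the desired symmetric monoidal equivalence $\Loc_{H\Q}(BX) \simeq_{\otimes} L_{H\Q}\Mod_{C^*(BX;\Q)}$ with no further work.

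For the Poincar\'e duality hypothesis, I would invoke either of two facts cited in the introduction. First, by \cite{BauerKitchlooNotbohmPedersen2004Finite}, any connected finite loop space $X$ is homotopy equivalent to a closed manifold, so $H^*(X;\Q)$ satisfies topological (and hence algebraic) Poincar\'e duality. Equivalently, one may invoke Hopf's theorem: because $X$ is a connected finite H-space of finite type, $H^*(X;\Q)$ is an exterior algebra $\Lambda_\Q(x_1,\ldots,x_r)$ on odd-degree generators $|x_i| = 2d_i - 1$, which is visibly a Poincar\'e duality algebra with top class $x_1\cdots x_r$.

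For relevance of the EMSS, I would exploit the two structural facts about $BX$ highlighted in the introduction: that $H^*(BX;\Q) \cong \Q[y_1,\ldots,y_r]$ is a polynomial algebra on even-degree generators $|y_i| = 2d_i$, and that rational polynomial rings are intrinsically formal as $E_\infty$-algebras. Formality yields an equivalence $C^*(BX;\Q) \simeq H^*(BX;\Q) = \Q[y_1,\ldots,y_r]$ of commutative ring spectra, so the natural map
\[
H\Q \otimes_{C^*(BX;\Q)} H\Q \longrightarrow C^*(\Omega BX;\Q) \simeq C^*(X;\Q)
\]
may be computed along the polynomial presentation. The left-hand side is then the Koszul complex of $\Q$ over $\Q[y_1,\ldots,y_r]$, whose homotopy groups form the exterior algebra $\Lambda_\Q(\sigma y_1,\ldots,\sigma y_r)$ on generators in degrees $2d_i-1$. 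These match the generators of $H^*(X;\Q)$ from the previous paragraph, and since both sides are formal commutative $H\Q$-algebras, agreement on homotopy upgrades to an equivalence of the map itself.

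The main obstacle is the EMSS relevance step: one must promote the degree-matching on cohomology (which is immediate from Koszul duality) to an equivalence of $E_\infty$-$H\Q$-algebras. The leverage here is formality of $H^*(BX;\Q)$, which turns a potentially delicate convergence problem for the Eilenberg--Moore spectral sequence into a purely algebraic Koszul computation where both source and target are formal. By contrast, the Poincar\'e duality hypothesis is essentially free from the structural results about finite loop spaces already recorded in the introduction.
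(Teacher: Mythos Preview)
Your overall strategy matches the paper exactly: apply \Cref{prop:mnn_unipotence} with $E=H\Q$, verify Poincar\'e duality for $C^*(X;\Q)$ via the exterior algebra structure $H^*(X;\Q)\cong\Lambda_\Q(x_1,\ldots,x_r)$, and then check EMSS relevance. The Poincar\'e duality step is handled identically.

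Where you diverge is in the EMSS relevance argument. The paper dispatches this in one line: since $X$ is connected, $BX$ is simply connected, and Dwyer's strong convergence theorem \cite{Dwyer1974Strong} immediately gives that $H\Q\otimes_{C^*(BX;\Q)}H\Q\to C^*(X;\Q)$ is an equivalence. Your approach via formality of $C^*(BX;\Q)$ and an explicit Koszul computation is a legitimate alternative route, and it has the appeal of making the structure visible rather than citing a black box. However, as written it contains a gap. You correctly compute that both the source and target of the comparison map have homotopy $\Lambda_\Q(\sigma y_1,\ldots,\sigma y_r)$, but then assert that ``since both sides are formal commutative $H\Q$-algebras, agreement on homotopy upgrades to an equivalence of the map itself.'' This does not follow: formality of source and target plus an abstract isomorphism $\pi_*A\cong\pi_*B$ says nothing about whether a \emph{given} map $A\to B$ is an equivalence. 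You must still check that the comparison map induces an isomorphism on $\pi_*$, for instance by identifying the image of each Koszul generator $\sigma y_i$ with the corresponding exterior generator $x_i$ via the Serre transgression for the fibration $X\to\ast\to BX$. That is a genuine additional step, and once you unwind it you are essentially reproving the relevant special case of Dwyer's theorem by hand. The paper's citation is therefore both shorter and logically cleaner.
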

  \begin{proof}
This is a consequence of \Cref{prop:mnn_unipotence} in the case $E = H\Q$. Indeed, $\pi_*C^*(X;\Q) \cong  H^{-\ast}(X;\Q) \cong \Lambda_{\Q}(x_1,\ldots,x_r)$, and in particular satisfies algebraic Poincar\'e duality, and hence $C^*(X;\Q)$ is a Poincar\'e duality algebra. Thus, it suffices to show that the Eilenberg--Moore spectral sequence for $BX$ is relevant, but because $BX$ is simply connected and we work over $\Q$,  \cite{Dwyer1974Strong} applies to show this.
  \end{proof}
Applying \Cref{prop:completions} we deduce the following.
\begin{cor}\label{cor:local_systems_cochains}
  Let $X$ be a connected finite loop space, then there are symmetric monoidal equivalence of $\infty$-categories
  \[
 \Loc_{H\Q}(BX) \simeq_{\otimes} \Mod_{C^*(BX;\Q)}^{I-\mathrm{comp}}.
  \]
\end{cor}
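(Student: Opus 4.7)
The plan is simply to chain together \Cref{prop:unipotence_loops} and \Cref{prop:completions}. From \Cref{prop:unipotence_loops} we already have a symmetric monoidal equivalence
\[
\Loc_{H\Q}(BX) \simeq_{\otimes} L_{H\Q}\Mod_{C^*(BX;\Q)},
\]
where the Bousfield localization is taken inside $\Mod_{C^*(BX;\Q)}$ at the augmentation $C^*(BX;\Q) \to H\Q$. So the entire content of the corollary reduces to identifying the right-hand side with $\Mod_{C^*(BX;\Q)}^{I-\mathrm{comp}}$ via \Cref{prop:completions}.

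To do this, we take $k = \Q$ and $R = C^*(BX;\Q)$ and verify the hypotheses of \Cref{prop:completions}. The ring spectrum $R$ is a commutative $H\Q$-algebra, coconnective because cochains live in non-positive homotopical degrees, and it is augmented via the basepoint inclusion $\ast \to BX$. Since $X$ is connected, $BX$ is simply connected, so $\pi_0 R \cong H^0(BX;\Q) \cong \Q$, giving the required identification $\pi_0 R \cong k$. For the Noetherian hypothesis we use the key computational input already recorded in the introduction: the rational cohomology of the classifying space of any connected finite loop space is a polynomial algebra
\[
\pi_{-*}R \;\cong\; H^*(BX;\Q) \;\cong\; \Q[y_1,\ldots,y_r]
\]
on generators in (even) positive degrees, which is of course Noetherian.

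With these hypotheses verified, \Cref{prop:completions} applies and yields a symmetric monoidal equivalence
\[
\Mod_{C^*(BX;\Q)}^{I-\mathrm{comp}} \simeq_{\otimes} L_{H\Q}\Mod_{C^*(BX;\Q)},
\]
where $I$ is the augmentation ideal of $\pi_{-*}R = H^*(BX;\Q)$. Composing with the equivalence from \Cref{prop:unipotence_loops} gives the desired
\[
\Loc_{H\Q}(BX) \simeq_{\otimes} \Mod_{C^*(BX;\Q)}^{I-\mathrm{comp}}.
\]
Since all the work is done by the two cited results, there is no real obstacle; the only thing one has to be mildly careful about is that the Bousfield localization in \Cref{prop:unipotence_loops} is performed in the category of $C^*(BX;\Q)$-modules, which matches exactly the setting of \Cref{prop:completions}.
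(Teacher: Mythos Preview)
Your proposal is correct and follows exactly the paper's approach: the paper simply states ``Applying \Cref{prop:completions} we deduce the following'' immediately after \Cref{prop:unipotence_loops}, and you have faithfully unpacked this by verifying the hypotheses of \Cref{prop:completions} for $R = C^*(BX;\Q)$.
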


In order to identify the right hand side of this equivalence, we begin by first identifying $\Mod_{C^*(BX;\Q)}$ with dg-modules over the graded ring $H^*(BX)$.  In order to do this, we first need a few words on free $E_{\infty}$-algebras. In particular, we recall that the free $E_{\infty}$-ring on a generator $t$ is defined (as a spectrum) by $\mathbb{S}\{ t \} = \oplus_{n \ge 0}B \Sigma_k$, and is characterized by the property that
\[
\Map_{\CAlg(\Sp)}(\mathbb{S}\{ t \},R) \simeq \Omega^{\infty}R
\]
naturally. In particular, given a ring spectrum with a class $x \in \pi_{0}R$, we obtain a map of commutative algebras $\mathbb{S}\{ t \} \to R$ sending the class $t \in \pi_0\mathbb{S}\{ t \}$ to $x$.

More generally, if $A$ is an $E_{\infty}$-ring spectrum, the free $E_{\infty}$-$A$-algebra on a generator $t$ is defined as
\[
A\{ t \} = \Sym^*(A) = \bigoplus (A^{\otimes n})_{h\Sigma_n}
\]
where the $\Sigma_n$ action is by permutation on the factors. If we wish $t$ to have degree $d$, then we can define $A\{ t \} = \Sym^*(\Sigma^dA)$. Iterating this procedure, we can define $A\{ t_1,\ldots,t_n \}$ as $(A\{ t_1,\ldots,t_{n-1}\})\{t_n\}$.
If the degrees of the $t_i$ are all even, then there is a canonical map
\[
A\{ t_1,\ldots,t_n \} \to HA[t_1,\ldots,t_n]
\]
which is not an equivalence in general. Here $H$ is the generalized Eilenberg--Maclane spectrum functor, which is right inverse to the functor $\pi_* \colon \Sp \to \GrAb$. However, in the case that $A = H\Q$ this canonical map is an equivalence because the higher rational homology of symmetric groups is trivial.

We deduce the following.
\begin{lem}\label{lem:free_rational}
  The free $E_{\infty}$-$\mathbb{Q}$-algebra on $n$ generators $x_1,\ldots,x_n$ concentrated in even degrees is $H\Q[x_1,\ldots,x_n]$.
\end{lem}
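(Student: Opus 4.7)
The plan is to unwind the construction of the free $E_\infty$-$H\Q$-algebra via (homotopy) symmetric powers and show that rationally the higher symmetric power terms collapse to the classical polynomial algebra.

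First I would reduce to the single-generator case by observing that the free $E_\infty$-$H\Q$-algebra on generators $x_1,\ldots,x_n$ of even degrees $d_1,\ldots,d_n$ can be written as
\[
H\Q\{x_1,\ldots,x_n\} \simeq \Sym^*\bigl(\textstyle\bigoplus_{i=1}^n \Sigma^{d_i}H\Q\bigr) \simeq \bigotimes_{i=1}^n H\Q\{x_i\},
\]
the last equivalence coming from the fact that $\Sym^*$ turns direct sums into tensor products in $\CAlg$. Thus it suffices to treat the case $n=1$ with a single generator $x$ in even degree $d$, and show $H\Q\{x\} \simeq H\Q[x]$.

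Next I would identify the canonical comparison map. The graded ring $H\Q[x]$ is the homotopy of an Eilenberg--MacLane ring spectrum, and the class $x \in \pi_d H\Q[x]$ determines a map of $E_\infty$-$H\Q$-algebras $\varphi \colon H\Q\{x\} \to H\Q[x]$ by the universal property recalled in the paragraph preceding the lemma. Since both source and target are $H\Q$-modules, and the cotangent/Postnikov/degree arguments reduce questions of equivalence to $\pi_*$, it is enough to show $\varphi$ induces an isomorphism on homotopy groups.

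Now I would compute $\pi_*H\Q\{x\}$ summand by summand. By definition
\[
H\Q\{x\} \simeq \bigoplus_{k \ge 0} \bigl(\Sigma^d H\Q\bigr)^{\otimes k}_{h\Sigma_k} \simeq \bigoplus_{k \ge 0} \Sigma^{kd}(H\Q)_{h\Sigma_k},
\]
where the $\Sigma_k$-action on $\Sigma^{kd}H\Q$ is by permutation of the tensor factors; because $d$ is even, this action is the trivial sign action. Therefore $(H\Q)_{h\Sigma_k} \simeq H\Q \otimes \Sigma^\infty_+ B\Sigma_k$, whose homotopy is $H_*(B\Sigma_k;\Q)$. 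It is a classical fact that $H_*(B\Sigma_k;\Q)$ is $\Q$ concentrated in degree $0$, so each summand contributes a single copy of $\Q$ in degree $kd$. Summing up yields
\[
\pi_* H\Q\{x\} \cong \bigoplus_{k \ge 0} \Q\langle x^k\rangle \cong \Q[x],
\]
and $\varphi$ is visibly an isomorphism on generators and on each polynomial basis element, hence an isomorphism.

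The only genuinely non-formal ingredient is the vanishing of higher rational homology of the symmetric groups, which I would simply cite; the rest is formal manipulation of free $E_\infty$-algebras and the fact that $H\Q$ is rationally a ring where $\Sigma_k$-homotopy invariants and coinvariants agree with their strict versions. Combining the single-generator case with the first step gives the lemma.
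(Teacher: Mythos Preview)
Your proposal is correct and follows essentially the same approach as the paper: the paper simply observes in the paragraph preceding the lemma that the canonical map $A\{t_1,\ldots,t_n\} \to HA[t_1,\ldots,t_n]$ is an equivalence when $A = H\Q$ because the higher rational homology of symmetric groups is trivial, and then states the lemma as a consequence. Your argument unwinds this in more detail (reducing to one generator, computing $\pi_*$ summand by summand), but the single non-formal input---$H_*(B\Sigma_k;\Q) \cong \Q$---is the same in both.
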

This is one part of the input into the following proposition.
\begin{prop}
  There is a symmetric monoidal equivalence of $\infty$-categories
  \[
\theta \colon \Mod_{C^*(BX:\Q)} \simeq_{\otimes} \cD_{H^*(BX)}.
  \]
\end{prop}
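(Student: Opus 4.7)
The plan is to prove this in two main steps: first establish the formality of $C^*(BX;\Q)$ as an $E_\infty$-algebra, and then apply the (graded) Shipley-type equivalence between modules over an Eilenberg--MacLane spectrum and the derived category of its underlying graded ring.

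\textbf{Step 1 (Formality).} Recall that since $X$ is a connected finite loop space, $H^*(BX;\Q) \cong \Q[y_1,\ldots,y_r]$ is a polynomial ring with each $y_i$ in even degree $2d_i$. Choose cocycle representatives $\tilde y_i \in \pi_{-2d_i}C^*(BX;\Q)$ for the $y_i$. Because each $|y_i|$ is even, the universal property of free $E_\infty$-$H\Q$-algebras gives a map of commutative $H\Q$-algebras
\[
H\Q\{\tilde y_1,\ldots,\tilde y_r\} \longrightarrow C^*(BX;\Q)
\]
sending the free generator $\tilde y_i$ to the chosen cocycle. By \Cref{lem:free_rational} the domain is canonically identified with the generalized Eilenberg--MacLane spectrum $H(\Q[y_1,\ldots,y_r]) = HH^*(BX;\Q)$. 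On $\pi_{-*}$ this map is the $\Q$-algebra homomorphism $\Q[y_1,\ldots,y_r] \to H^*(BX;\Q)$ sending $y_i$ to $y_i$, which is an isomorphism by construction. Hence the map is an equivalence of $E_\infty$-$\Q$-algebras
\[
C^*(BX;\Q) \simeq HH^*(BX;\Q).
\]

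\textbf{Step 2 (Modules over the formal model).} The equivalence of $E_\infty$-ring spectra in Step 1 induces, by extension of scalars, a symmetric monoidal equivalence of module $\infty$-categories
\[
\Mod_{C^*(BX;\Q)} \simeq_{\otimes} \Mod_{HH^*(BX;\Q)}.
\]
By the graded version of Shipley's theorem (see the discussion of the projective model structure on $\Mod_A$ in the text with $A = H^*(BX;\Q)$, which is Noetherian and graded-commutative), there is a symmetric monoidal equivalence
\[
\Mod_{HH^*(BX;\Q)} \simeq_{\otimes} \cD_{H^*(BX)}
\]
between modules over the Eilenberg--MacLane spectrum of the graded polynomial ring and its derived $\infty$-category of dg-modules. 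Composing the two equivalences yields the desired symmetric monoidal equivalence $\theta$.

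\textbf{Where the work sits.} The main conceptual content is Step 1: the formality of $C^*(BX;\Q)$. All the hypotheses needed are already in place, namely that the rational cohomology of $BX$ is polynomial on even-degree generators (a key input recalled in the introduction) and that over $\Q$ the free $E_\infty$-algebra on even-degree generators coincides with the symmetric algebra, which is exactly the content of \Cref{lem:free_rational}. Once formality is in hand, Step 2 is a standard application of Schwede--Shipley / Lurie module-category transport plus Shipley's comparison between $HR$-modules and unbounded dg-$R$-modules.
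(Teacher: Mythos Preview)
Your proposal is correct and follows essentially the same approach as the paper: establish formality of $C^*(BX;\Q)$ by using the universal property of the free $E_\infty$-$H\Q$-algebra together with \Cref{lem:free_rational}, then transport module categories along this equivalence and invoke Shipley's comparison $\Mod_{HA}\simeq_\otimes \cD_A$. Your write-up is somewhat more detailed in explaining why the formality map is an equivalence, but the structure and key inputs are identical.
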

\begin{proof}
  Because $H^*(BX;\Q) \cong \Q[x_1,\ldots,x_n]$, the universal property of the free $E_{\infty}$-$\mathbb{Q}$-algebra gives a morphism
\[
\phi \colon H\Q[x_1,\ldots,x_n] \to C^*(BX;\Q)
\]
of $E_{\infty}$-ring spectra, which is clearly an equivalence. As such one gets a symmetric monoidal equivalence of $\infty$-categories
\[
\Mod_{C^*(BX;\Q)} \simeq_{\otimes} \Mod_{H\Q[x_1,\ldots,x_n]}.
\]
The latter is equivalent (as a symmetric monoidal $\infty$-category) to $\cD_{H^*(BX)}$ by \cite[Proposition 2.10]{Shipley2007algebra} or \cite[Section 7.1.2]{ha} and we are done.
\end{proof}
Because $\theta$ is symmetric monoidal it preserves the tensor unit, i.e., $\theta(C^*(BX;\Q)) \simeq H^*(BX)$. It follows (again using that $\theta$ is symmetric monoidal) that $\theta(\mathbb{K}(I)) \simeq K(I)$, and one deduces the following.
\begin{cor}\label{lem:completion_compare}
  The equivalence $\theta$ restricts to a symmetric monoidal equivalence of $\infty$-categories
  \[
\theta \colon \Mod_{C^*(BX:\Q)}^{I-\mathrm{cmpl}} \simeq_{\otimes} \cD_{H^*(BX)}^{I-\mathrm{cmpl}}.
  \]
\end{cor}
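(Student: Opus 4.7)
The plan is to reduce this to the observation, already flagged in the paragraph preceding the statement, that the equivalence $\theta$ preserves the ``Koszul generator'' of the torsion subcategory, and then invoke the abstract local duality machinery of \Cref{thm:hps}. The hard work has already been done in establishing $\theta$ itself; what remains is to argue that the completion subcategories are intrinsic to the symmetric monoidal structure together with the choice of Koszul object.

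First, I would recall that $\Mod_{C^*(BX;\Q)}^{I\text{-}\mathrm{comp}}$ is, by definition, the full subcategory of $\mathbb{K}(I)$-complete objects in $\Mod_{C^*(BX;\Q)}$, where $\mathbb{K}(I) = \bigotimes_{i} \mathbb{K}(x_i)$ and $\mathbb{K}(x_i) = \fib(\Sigma^{|x_i|} C^*(BX;\Q) \xrightarrow{\cdot x_i} C^*(BX;\Q))$. Similarly, $\cD_{H^*(BX)}^{I\text{-}\mathrm{comp}}$ is the full subcategory of $K(I)$-complete objects. Since $\theta$ is a symmetric monoidal equivalence, it sends the unit to the unit, i.e., $\theta(C^*(BX;\Q)) \simeq H^*(BX)$, and it identifies $\pi_* C^*(BX;\Q) \cong H^*(BX)$ as graded rings; in particular, multiplication by $x_i$ on the unit is sent to multiplication by $x_i$ on the unit. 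Because $\theta$ also preserves fibers and tensor products, we obtain $\theta(\mathbb{K}(x_i)) \simeq K(x_i)$ and hence $\theta(\mathbb{K}(I)) \simeq K(I)$.

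Next, any equivalence of stable $\infty$-categories takes localizing subcategories to localizing subcategories, so $\theta$ restricts to an equivalence
\[
\theta \colon \Mod_{C^*(BX;\Q)}^{I\text{-}\mathrm{tors}} \xrightarrow{\;\sim\;} \cD_{H^*(BX)}^{I\text{-}\mathrm{tors}}.
\]
From the characterization of $\cC^{\cA\text{-}\mathrm{comp}}$ as the right orthogonal to the $\cA$-local objects (equivalently, the right orthogonal to the complement of the $\cA$-torsion objects in the semi-orthogonal decomposition of \eqref{eq:abstractdiagram}), and from the fact that $\theta$ preserves the internal mapping object (by symmetric monoidality plus closure), it follows that $\theta$ also restricts to an equivalence on complete subcategories. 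Because the restriction is the restriction of a symmetric monoidal equivalence to full symmetric monoidal subcategories (the completions inherit their monoidal structure from the ambient category via $\Lambda^I$ applied to the tensor product), the restricted equivalence is symmetric monoidal.

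The only non-formal step is the identification $\theta(\mathbb{K}(I)) \simeq K(I)$, and this is essentially automatic once one unpacks the construction of the Koszul object entirely in terms of the monoidal unit, the ring structure, fibers, and tensor products, all of which $\theta$ preserves. I therefore expect no substantive obstacle; the proof is simply a matter of recording these compatibilities.
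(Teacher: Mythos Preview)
Your proposal is correct and follows essentially the same approach as the paper: the key point, noted in the paragraph preceding the corollary, is that the symmetric monoidal equivalence $\theta$ sends the unit to the unit and hence $\theta(\mathbb{K}(I)) \simeq K(I)$, from which the identification of the complete subcategories follows formally from the local duality machinery. If anything, you have spelled out more carefully than the paper does how the complete subcategories are intrinsic to the monoidal structure together with the Koszul object.
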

We now come to our main theorem.
\begin{thm}\label{thm:main}
    Let $X$ be a connected finite loop space, then there is a symmetric monoidal equivalence of $\infty$-categories
  \[
\Loc_{H\Q}(BX) \simeq_{\otimes} \cal{D}(\Mod_{H^*(BX)}^{I-\mathrm{comp}}).
  \]
\end{thm}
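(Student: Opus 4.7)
The plan is to assemble the chain of symmetric monoidal equivalences already advertised in the introduction, verifying that all hypotheses are met at each step. Concretely, I would write
\[
\Loc_{H\Q}(BX) \simeq_{\otimes} L_{H\Q}\Mod_{C^*(BX;\Q)} \simeq_{\otimes} \Mod_{C^*(BX;\Q)}^{I-\mathrm{comp}} \simeq_{\otimes} \cD_{H^*(BX)}^{I-\mathrm{comp}} \simeq_{\otimes} \cD(\Mod_{H^*(BX)}^{I-\mathrm{comp}}),
\]
and then invoke, in order, Proposition \ref{prop:unipotence_loops}, Proposition \ref{prop:completions}, Corollary \ref{lem:completion_compare}, and Theorem \ref{thm:completion_monoidal_equiv}.

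The first equivalence is direct: Proposition \ref{prop:unipotence_loops} applies verbatim. For the second, I need to check that $R = C^*(BX;\Q)$ satisfies the hypotheses of Proposition \ref{prop:completions}. Since $X$ is a connected finite loop space, $BX$ is simply connected, so the basepoint inclusion endows $C^*(BX;\Q)$ with the structure of a coconnective augmented $\Q$-algebra with $\pi_0 \cong \Q$; moreover $\pi_{-*}C^*(BX;\Q) \cong H^*(BX;\Q) \cong \Q[y_1,\ldots,y_r]$ is Noetherian. The augmentation ideal $I$ is the one generated by $y_1,\ldots,y_r$, so Proposition \ref{prop:completions} yields the second equivalence. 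The third equivalence is Corollary \ref{lem:completion_compare}, which already packages the translation from $C^*(BX;\Q)$-modules to dg-$H^*(BX)$-modules. The last equivalence is Theorem \ref{thm:completion_monoidal_equiv} applied to $A = H^*(BX)$ and the ideal $I$; note that $A$ is Noetherian and $I$ is generated by the regular sequence $y_1,\ldots,y_r$, so the regularity hypothesis needed for the algebraic completion results holds.

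There is no main obstacle in the proof per se; the work has all been done in the preceding sections. The only step requiring any thought is ensuring the hypotheses line up: in particular, that the polynomial structure of $H^*(BX;\Q)$ (a deep fact about finite loop spaces used implicitly in \Cref{prop:unipotence_loops} via the Eilenberg--Moore spectral sequence and Poincar\'e duality) gives both a Noetherian ring and a regular generating sequence for $I$, and that the augmentation ideal in $\pi_*C^*(BX;\Q)$ matches, under the formality isomorphism of \Cref{lem:completion_compare}, the augmentation ideal in $H^*(BX)$. Both are immediate since the same generators $y_1,\ldots,y_r$ are used on both sides. Stringing the four displayed equivalences together then proves the theorem.
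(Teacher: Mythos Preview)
Your proposal is correct and follows exactly the same route as the paper: the paper's proof simply reads ``Combine \Cref{cor:local_systems_cochains,lem:completion_compare,thm:completion_monoidal_equiv},'' where \Cref{cor:local_systems_cochains} is precisely the concatenation of your first two steps (\Cref{prop:unipotence_loops} and \Cref{prop:completions}). Your additional remarks verifying the Noetherian and regularity hypotheses are accurate and add nothing new, since these follow immediately from $H^*(BX;\Q)\cong\Q[y_1,\ldots,y_r]$.
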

\begin{proof}
  Combine \Cref{cor:local_systems_cochains,lem:completion_compare,thm:completion_monoidal_equiv}.
\end{proof}
Using \Cref{cor:gspectraatkfree} we deduce the following result.
\begin{cor}\label{cor:pol_williamson}
  Let $G$ be a compact Lie group and $K$ a closed normal subgroup such that the Weyl group $W_GK$ is a connected compact Lie group. There is an equivalence of symmetric monoidal $\infty$-categories
  \[
\Sp_{G,\Q}^{\langle K \rangle} \simeq_{\otimes} \cD( \Mod_{H^*(B(W_GK))}^{I-\mathrm{comp}})
  \]
\end{cor}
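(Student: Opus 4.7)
The plan is to chain together three previously established equivalences of symmetric monoidal stable $\infty$-categories. The key observation is that since $W_GK$ is assumed to be a connected compact Lie group, it is automatically a connected finite loop space (with the usual identification $W_GK \simeq \Omega B(W_GK)$), so \Cref{thm:main} applies to $W_GK$.

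First, I would invoke \Cref{thm:gspectraatk} to obtain a symmetric monoidal equivalence
\[
\Sp_{G,\Q}^{\langle K \rangle} \simeq_{\otimes} \Sp_{W_GK,\Q}^{\mathrm{cofree}}.
\]
This reduces the problem from equivariant homotopy for $G$ to cofree equivariant homotopy for the Weyl group $W_GK$. Next, I apply \Cref{prop:cofree_global} (which identifies rational cofree $H$-spectra with rational $H$-local systems on $BH$) to obtain
\[
\Sp_{W_GK,\Q}^{\mathrm{cofree}} \simeq_{\otimes} \Fun(B(W_GK), \Mod_{H\Q}) = \Loc_{H\Q}(B(W_GK)).
\]
Finally, since $W_GK$ is a connected finite loop space by hypothesis, \Cref{thm:main} provides a symmetric monoidal equivalence
\[
\Loc_{H\Q}(B(W_GK)) \simeq_{\otimes} \cD(\Mod_{H^*(B(W_GK))}^{I-\mathrm{comp}}).
\]
Composing these three equivalences yields the desired result.

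Because each input equivalence is symmetric monoidal and the composition of symmetric monoidal equivalences is symmetric monoidal, no additional verification is needed. There is no real obstacle here: all of the substantive work has been done in the earlier sections, in particular in the proof of \Cref{thm:gspectraatk} (the reduction from $G$-spectra at $K$ to cofree $W_GK$-spectra via the compactly generated localization principle) and in the proof of \Cref{thm:main} (the unipotence argument, together with the Pol--Williamson identification of $L_0^I$-complete modules with the derived category of the abelian category $\Mod_{H^*(BX)}^{I-\mathrm{comp}}$). The only hypothesis used in this final assembly is the connectedness of $W_GK$, which is exactly what is needed to invoke \Cref{thm:main}.
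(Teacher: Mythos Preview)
Your proposal is correct and follows essentially the same approach as the paper: reduce to local systems on $B(W_GK)$ via the equivariant results of Section~3, then apply \Cref{thm:main}. The paper packages your first two steps into the single citation \Cref{prop:cor_atk} (the combination of \Cref{thm:gspectraatk} and \Cref{prop:cofree_global}), but the argument is identical.
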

If $K = \{ e \}$, then $W_GK \simeq BG$ and we recover \cite[Theorem 8.4]{pol_williamson}.

Finally, we point out that local duality also gives a model for $\Sp_{G,\langle K \rangle,\Q}$ as well as the local category $\Sp_{G,\Q}^{\langle K \rangle-\mathrm{loc}}$. Indeed, in the following diagram each of the three outer categories on the left is equivalent to the corresponding category on the right:\footnote{Note that the middle categories are definitely not equivalent, however.}
\[
\xymatrix{& \Sp_{G,\Q}^{\langle K \rangle-\text{loc}} \ar@<0.5ex>[d] \ar@{-->}@/^1.5pc/[ddr] \\
& \Sp_{G,\Q}[A_{\cal{F}_{\not \ge K}}^{-1}] \ar@<0.5ex>[u] \ar@<0.5ex>[ld] \ar@<0.5ex>[rd] \\
\Sp_{G,\langle K \rangle,\Q} \ar@<0.5ex>[ru] \ar[rr]_{\sim} \ar@{-->}@/^1.5pc/[ruu] & & \Sp_{G,\Q}^{\langle K \rangle} \ar@<0.5ex>[lu]} \quad
\xymatrix{& \cD_{{H^*(B(W_GK))}}^{I-\mathrm{loc}} \ar@<0.5ex>[d] \ar@{-->}@/^1.5pc/[ddr] \\
& \cD_{H^*(B(W_GK))} \ar@<0.5ex>[u] \ar@<0.5ex>[ld] \ar@<0.5ex>[rd] \\
\cD_{H^*(B(W_GK))}^{I-\mathrm{tors}} \ar@<0.5ex>[ru] \ar[rr]_{\sim} \ar@{-->}@/^1.5pc/[ruu] & & \cD_{{H^*(B(W_GK))}}^{I-\mathrm{comp}}. \ar@<0.5ex>[lu]}
\]
Using the algebraic models constructed in \Cref{thm:tors,thm:local} we deduce the following.
\begin{cor}\label{cor:free_local}
  Let $G$ be a compact Lie group and $K$ a closed normal subgroup such that the Weyl group $W_GK$ is a connected compact Lie group.
  \begin{enumerate}
     \item There is an equivalence of symmetric monoidal $\infty$-categories
  \[
\Sp_{G,{\langle K \rangle}, \Q}\simeq_{\otimes} \cD( \Mod_{H^*(B(W_GK))}^{I-\mathrm{tors}})
  \]
  \item Let $\cal{X} = \Spec(H^*(B(W_GK)))$, $\cal{Z} = \mathcal{V}(I)$, and $\cal{U} = \cal{X} - \cal{Z}$. Then there is an equivalence of $\infty$-categories
  \[
\Sp_{G,\Q}^{\langle K \rangle-\mathrm{loc}} \simeq j_*\cal{D}_{qu}(\cal{U}),
  \]
  where the right-hand side denotes the essential image of the fully-faithful functor $j_* \colon \cD_{qu}(\cU) \to \cD_{qu}(\cal{X})$.
   \end{enumerate}
\end{cor}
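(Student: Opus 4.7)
The plan is to prove both parts by chaining together equivalences already established in the paper, exploiting that $W_GK$ is a connected compact Lie group and that $K$ is normal in $G$.

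For part (1), I would compose the following equivalences in order. First, \Cref{cor:gspectraatkfree} identifies $\Sp_{G,\langle K\rangle,\Q}$ with $\Sp_{W_GK,\Q}^{\mathrm{free}}$. Second, local duality in $\Sp_{W_GK,\Q}$ (\Cref{thm:hps}(3)) sends this to the cofree category $\Sp_{W_GK,\Q}^{\mathrm{cofree}}$. Third, since $W_GK$ is connected, \Cref{cor:pol_williamson} applied to $W_GK$ with trivial subgroup produces the symmetric monoidal equivalence $\Sp_{W_GK,\Q}^{\mathrm{cofree}} \simeq_\otimes \cD(\Mod^{I-\mathrm{comp}}_{H^*(B(W_GK))})$, which by \Cref{thm:completion_monoidal_equiv} is $\cD^{I-\mathrm{comp}}_{H^*(B(W_GK))}$. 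Finally, local duality on the algebraic side (\Cref{thm:hps}(3) again) yields $\cD^{I-\mathrm{comp}}_{H^*(B(W_GK))} \simeq \cD^{I-\mathrm{tors}}_{H^*(B(W_GK))}$, and by \Cref{thm:tors} the latter equals $\cD(\Mod^{I-\mathrm{tors}}_{H^*(B(W_GK))})$.

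For part (2), I would begin by invoking the normality of $K$: \Cref{prop:ns} supplies a symmetric monoidal equivalence $\Sp_{G,\Q}[A_{\cF_{\not \ge K}}^{-1}] \simeq_\otimes \Sp_{W_GK,\Q}$, which preserves the torsion/local/complete decomposition. Under this equivalence the local category $\Sp_{G,\Q}^{\langle K\rangle-\mathrm{loc}}$ corresponds to $\Sp_{W_GK,\Q}^{\mathrm{free}-\mathrm{loc}}$. Combined with \Cref{thm:local}, which identifies $\cD^{I-\mathrm{loc}}_{H^*(B(W_GK))}$ with $j_*\cD_{qc}(\cU)$, the task reduces to establishing an equivalence $\Sp_{W_GK,\Q}^{\mathrm{free}-\mathrm{loc}} \simeq \cD^{I-\mathrm{loc}}_{H^*(B(W_GK))}$.

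The hardest step will be exactly this last identification. As the paper emphasizes, the ambient categories $\Sp_{W_GK,\Q}$ and $\cD_{H^*(B(W_GK))}$ are genuinely not equivalent, so the local equivalence cannot be transported from an ambient comparison. My approach would be to leverage the full recollement structure on each side: we already have compatible equivalences on the torsion pieces (from part (1)) and on the complete pieces (from \Cref{cor:pol_williamson}), and these matchings are coherent via local duality. Since $\Sp_{W_GK,\Q}^{\mathrm{free}-\mathrm{loc}}$ and $\cD^{I-\mathrm{loc}}_{H^*(B(W_GK))}$ each arise simultaneously as the right orthogonal of the torsion subcategory and the left orthogonal of the complete subcategory in their respective ambients, one expects the matched corner data to pin down the local piece and promote the torsion/complete equivalences to an equivalence of the local subcategories. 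Turning this heuristic into a rigorous semi-orthogonal-decomposition argument (in the spirit of \cite[Corollaries 7.1.2.7 and 7.1.2.8]{sag}, as already used in the proof of \Cref{thm:local}) is the technical core of part (2).
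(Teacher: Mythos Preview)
Your argument for part (1) is correct and matches the paper's: both chain the complete-category equivalence of \Cref{cor:pol_williamson} with local duality on each side (\Cref{thm:hps}(3)) and then invoke \Cref{thm:tors}. The only difference is the order in which you pass through $\Sp_{W_GK,\Q}^{\mathrm{free}}$ versus $\Sp_{G,\Q}^{\langle K\rangle}$, which is immaterial.

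For part (2) you correctly identify the local-category equivalence as the hard step, and you are in fact being more careful than the paper, which simply draws the two local-duality diagrams and asserts that all three outer vertices correspond. However, your proposed semi-orthogonal fix does not close the gap. The local category $\cC^{\cA-\mathrm{loc}}$ is the Verdier quotient of the ambient $\cC$ by the torsion subcategory (equivalently, the kernel of the Bousfield localization $\cC \to \cC^{\cA-\mathrm{comp}}$), so it depends on how the torsion piece embeds in the ambient, not merely on the abstract equivalence type of the torsion or complete corner. Since the two ambients $\Sp_{G,\Q}[A_{\cF_{\not \ge K}}^{-1}]$ and $\cD_{H^*(B(W_GK))}$ are genuinely inequivalent (as the paper itself notes), having matched the torsion and complete corners---even compatibly via local duality---does not determine the local ones. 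Your detour through \Cref{prop:ns} reduces the question to comparing $\Sp_{W_GK,\Q}^{\mathrm{free}-\mathrm{loc}}$ with $\cD_{H^*(B(W_GK))}^{I-\mathrm{loc}}$, but exactly the same obstruction reappears there: $\Sp_{W_GK,\Q}$ and $\cD_{H^*(B(W_GK))}$ are still inequivalent ambients. As written, part (2) remains a genuine gap in your proposal, and the paper's own one-line justification supplies no additional argument beyond what you already sketch.
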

\section{An Adams spectral sequence}
In this final section we construct an Adams spectral sequence in the category $\cC=\Loc_{H\Q}(BX)$ when $X$ is a connected finite loop space. We once again fix a graded commutative Noetherian ring $A$. We will denote the abelian category $\Mod_A^{I-\mathrm{comp}}$ of $L_0^I$-complete dg-$A$-modules by $\cA$. As we will see, this category has enough projectives, and so we can construct an $\Ext$ functor, denoted $\widehat \Ext$, in this category. We also a notion of homotopy groups in $\cC$.
\begin{defn}
  For $M \in \cC$, let $\pi^{\cC}_*(M) = \pi_*\Hom_{\cC}(\unit,M)$.
\end{defn}
We also recall that $H^*(X) \cong \Lambda_{\Q}(x_1,\ldots,x_n)$; we say that the rank of a finite connected loop space is the integer $n$. The spectral sequence then takes the following form.
\begin{thm}\label{thm:adss}
Let $X$ be a finite connected loop space, then for $M,N \in \cC$, there is a natural, conditionally and strongly convergent, spectral sequence of $H^*(BX)$-modules with
  \[
E_2^{s,t} \cong \widehat\Ext^{s,t}_{H^*(BX)}(\pi_*^{\cC}M,\pi_*^{\cC}N) \cong \Ext^{s,t}_{H^*(BX)}(\pi_*^{\cC}M,\pi_*^{\cC}N) \implies \pi_{t-s}\Hom_{\cC}(M,N).
  \]
  Moreover, $E_2^{s,t} = 0$ when $s > \rank(X)$.
\end{thm}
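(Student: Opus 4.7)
The plan is to transport the mapping space computation to the algebraic model, invoke the universal coefficient spectral sequence for ring spectra, and read off the vanishing from the global dimension of $H^*(BX)$. Set $A = H^*(BX;\Q) \cong \Q[y_1,\ldots,y_r]$ and $R = C^*(BX;\Q)$. Combining \Cref{prop:unipotence_loops} with \Cref{prop:completions} gives a fully faithful inclusion $\cC = \Loc_{H\Q}(BX) \simeq \Mod_R^{I-\mathrm{comp}} \hookrightarrow \Mod_R$, so for $M, N \in \cC$ the mapping spectrum $\Hom_\cC(M,N)$ agrees with $F_R(M,N)$. Under this identification the unit $\unit \in \cC$ corresponds to $R$, so $\pi_*^\cC(M) \cong \pi_*M$ as a graded module over $\pi_*R \cong A$.

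Next I apply the universal coefficient spectral sequence \cite[Theorem IV.4.1]{ElmendorfKrizMellMay1997Rings} in $\Mod_R$. Since $\pi_*R \cong A$ is Noetherian of finite global dimension $r$, every $A$-module admits a projective resolution of length at most $r$, and geometrically realizing such a resolution of $\pi_*M$ produces a conditionally convergent spectral sequence
\[
E_2^{s,t} = \Ext^{s,t}_A(\pi_*M,\pi_*N) \;\Longrightarrow\; \pi_{t-s}F_R(M,N) \cong \pi_{t-s}\Hom_\cC(M,N).
\]
Because $A \cong \Q[y_1,\ldots,y_r]$ has global dimension exactly $r = \rank(X)$, the $E_2$-page vanishes for $s > r$, so the spectral sequence is horizontally bounded, and the combination of conditional convergence with this finite horizontal vanishing forces strong convergence.

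It then remains to compare $\Ext$ with $\widehat\Ext$. Both $\pi_*M$ and $\pi_*N$ are $L_0^I$-complete by \Cref{rem:homology_check}, and viewing them as objects concentrated in degree zero of $\cD(\Mod_A^{I-\mathrm{comp}})$, the groups $\widehat\Ext^{s,t}_A(\pi_*M,\pi_*N)$ are by definition the morphism groups in this derived category. By \Cref{thm:completion_monoidal_equiv} we have $\cD(\Mod_A^{I-\mathrm{comp}}) \simeq \cD_A^{I-\mathrm{comp}}$, and the latter embeds fully faithfully in $\cD_A$; thus these morphism groups agree with those in $\cD_A$, which are precisely $\Ext^{s,t}_A(\pi_*M,\pi_*N)$. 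This yields the asserted isomorphism $\widehat\Ext^{s,t}_A \cong \Ext^{s,t}_A$.

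The main subtlety is to ensure that the UCSS, built in the ambient category $\Mod_R$, computes the same mapping spectra as those in $\cC$; this is guaranteed by the full-faithful inclusion $\Mod_R^{I-\mathrm{comp}} \hookrightarrow \Mod_R$. A secondary point worth noting is that attempting to build a $\widehat\Ext$-spectral sequence by projective resolutions directly inside the non-Grothendieck abelian category $\Mod_A^{I-\mathrm{comp}}$ would require separate global-dimension estimates there; the derived-category comparison of the previous paragraph avoids this entirely.
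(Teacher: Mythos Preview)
Your proof is correct and follows the same overall strategy as the paper: transport along the fully faithful inclusion $\cC \simeq \Mod_R^{I-\mathrm{comp}} \hookrightarrow \Mod_R$, invoke the universal coefficient spectral sequence of \cite[Theorem~IV.4.1]{ElmendorfKrizMellMay1997Rings}, and use that $H^*(BX)\cong\Q[y_1,\dots,y_r]$ has global dimension $r$ for both the vanishing line and strong convergence.

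The one place you diverge from the paper is the identification $\widehat\Ext\cong\Ext$. The paper argues this directly at the level of abelian categories (\Cref{prop:ext}): it builds projective resolutions in $\cA=\Mod_A^{I-\mathrm{comp}}$ by applying $L_0^I$ to free resolutions in $\Mod_A$, and then the adjunction $\Hom_{\cA}(L_0^I F,N)\cong\Hom_A(F,N)$ for $N\in\cA$ gives the comparison. You instead invoke \Cref{thm:completion_monoidal_equiv} to pass through the fully faithful embedding $\cD(\Mod_A^{I-\mathrm{comp}})\simeq\cD_A^{I-\mathrm{comp}}\hookrightarrow\cD_A$ at the derived level. Both arguments are valid and encode the same content; your phrasing ``by definition'' is a slight overstatement, since one still needs the (standard) fact that mapping spaces in $\cD(\Mod_A^{I-\mathrm{comp}})$ between objects in the heart compute the classical derived functors $\widehat\Ext$, which amounts to knowing that cofibrant replacement in the projective model structure yields a projective resolution. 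Your closing remark about avoiding a separate global-dimension estimate in $\cA$ is apt, though the paper does in fact establish that estimate as well.
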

As we will see, working with ring spectra makes the construction of such a spectral sequence very simple; it is just an example of the universal coefficient spectral sequence constructed in \cite[Theorem IV.4.1]{ElmendorfKrizMellMay1997Rings}.

 We first observe that $\cA$ has enough projectives; these are the $I_n$-adic completion of free-modules (also known as pro-free modules), see \cite[Theorem A.9 and Corollary A.12]{HoveyStrickl1999Morava}. By \cite[Proposition A.15]{BarthelFrankl2015Completed} if $M \in \Mod_{H^*(BX)}$ is a flat $H^*(BX)$-module, then $L_0^IM$ is projective in $\cA$, and the left derived functors $\mathbb{L}^iL_0^IM \cong 0$ for $i >0$ by \cite[Theorem 4.1]{MR1172439}. Note this implies that $\cA$ has projective dimension equal to the rank of $X$; indeed, suppose $M \in \cA$ and choose a flat resolution of $M \in \Mod_{H^*(BX)}$
\[
0 \to F_n \to \cdots \to F_2 \to F_1 \to F_0 \to M.
\]
A simple inductive argument on the short exact sequences associated to the resolution shows that
\[
0 \to L_0^I(F_n) \to \cdots L_0^I(F_2) \to L_0^I(F_1) \to L_0^I(F_0) \to L_0^I(M) \cong M
\]
is a projective resolution of $M$ in $\cA$. Thus, $\cal{A}$ has projective dimension $n$. See also \cite[Proposition 1.10]{hovey_ss}.

Because $L_0^I$ is left adjoint to the inclusion functor $\cA \to \Mod_{H^*(BX)}$, we deduce the following, see also \cite[Proposition 5.6]{pol_williamson} or \cite[Theorem 1.11]{hovey_ss}.
\begin{prop}\label{prop:ext}
  Let $\widehat \Ext$ denote the Ext-groups in $\cA$, then for $P,S \in \cA$ we have
  \[
\widehat \Ext_{H^*(BX)}(P,S) \simeq \Ext_{H^*(BX)}(P,S).
  \]
\end{prop}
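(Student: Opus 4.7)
My plan is to compute both Ext groups from a single resolution, exploiting the $L_0^I \dashv i$ adjunction to pass between the two abelian categories. Fix a projective (equivalently, free) resolution $F_\bullet \to P$ of $P$ in $\Mod_{H^*(BX)}$, which by definition computes $\Ext^{*}_{H^*(BX)}(P, S) = H^{*}\Hom_{\Mod_{H^*(BX)}}(F_\bullet, S)$.

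Next, I would transport this resolution to $\cA$ by applying $L_0^I$ termwise. Each $L_0^I F_i$ is pro-free, hence projective in $\cA$ by the facts recalled immediately before the statement, and because $P$ is $L_0^I$-complete we have $L_0^I P \cong P$. The essential step is that $L_0^I F_\bullet \to P$ remains exact in $\cA$, not merely right-exact; this is the inductive argument on the short exact sequences of syzygies sketched in the preceding paragraph, built out of the Tor-vanishing $\mathbb{L}^{j} L_0^I = 0$ on flat modules together with the finite global dimension of the polynomial ring $H^*(BX)$, which permits choosing a length-$\rank(X)$ flat resolution whose syzygies are themselves flat. Granted this, $L_0^I F_\bullet \to P$ is a genuine projective resolution of $P$ in $\cA$, so $\widehat{\Ext}^{*}_{H^*(BX)}(P, S) = H^{*}\Hom_{\cA}(L_0^I F_\bullet, S)$.

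The identification is then formal. The adjunction $L_0^I \dashv i$ supplies, naturally in $i$, isomorphisms
\[
\Hom_{\cA}(L_0^I F_i, S) \;\cong\; \Hom_{\Mod_{H^*(BX)}}(F_i, iS) \;=\; \Hom_{\Mod_{H^*(BX)}}(F_i, S),
\]
which assemble into an isomorphism of cochain complexes. Passing to cohomology yields the desired $\widehat{\Ext}^{*}_{H^*(BX)}(P, S) \cong \Ext^{*}_{H^*(BX)}(P, S)$. The only delicate input is the preservation of exactness under $L_0^I$, which is the same fact already used in the discussion immediately preceding the proposition; everything else reduces to standard adjoint-functor bookkeeping.
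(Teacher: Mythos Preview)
Your proof is correct and is exactly the argument the paper intends: the paper's own proof is the single sentence ``Because $L_0^I$ is left adjoint to the inclusion functor $\cA \to \Mod_{H^*(BX)}$, we deduce the following'' together with two citations, and you have unpacked precisely this adjunction argument.

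One small correction to your justification of exactness: the syzygies of a finite flat (or free) resolution are \emph{not} themselves flat in general, so that clause does not do the work you want. The inductive argument in the paragraph preceding the proposition instead runs from the bottom up. The key input is that $P \in \cA$ already forces $\mathbb{L}^{j} L_0^I(P) = 0$ for $j>0$ (this is the module-level shadow of the homological characterisation of the derived $I$-complete category recorded earlier in the paper, or see the references to Hovey and Pol--Williamson). Feeding this into the long exact sequence for $0 \to Z_1 \to F_0 \to P \to 0$ gives $\mathbb{L}^{j} L_0^I(Z_1) = 0$ for $j>0$ and exactness of $0 \to L_0^I Z_1 \to L_0^I F_0 \to P \to 0$; one then iterates up the resolution. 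With this fix your argument goes through unchanged, and the adjunction step $\Hom_{\cA}(L_0^I F_i, S) \cong \Hom_{\Mod_{H^*(BX)}}(F_i, S)$ finishes the proof exactly as you wrote.
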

We also have the following, which is proved identically to \cite[Corollary 3.14]{BarthelFrankl2015Completed}.
\begin{lem}\label{lem:lcompleteness}
  Suppose $A \in \Mod_{C^*(BX;\Q)}$, then $A \in \Mod^{I-\mathrm{comp}}_{C^*(BX;\Q)}$ if and only if $\pi_*A \in \cA$.
\end{lem}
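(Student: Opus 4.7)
The plan is to reduce the statement to the already-established algebraic characterization of completeness in \Cref{rem:homology_check}. Recall that we have the symmetric monoidal equivalence $\theta \colon \Mod_{C^*(BX;\Q)} \simeq_{\otimes} \cD_{H^*(BX)}$ from the proposition preceding \Cref{lem:completion_compare}, and that, since $\phi \colon H\Q[x_1,\ldots,x_n] \to C^*(BX;\Q)$ is an equivalence of $E_\infty$-ring spectra, $\theta$ is essentially restriction of scalars along $\phi$. In particular, $\theta$ preserves underlying spectra, so $\pi_* A \cong H_* \theta(A)$ as graded $H^*(BX)$-modules.

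Next I would invoke \Cref{lem:completion_compare}, which asserts that $\theta$ restricts to an equivalence $\Mod_{C^*(BX;\Q)}^{I-\mathrm{comp}} \simeq_{\otimes} \cD_{H^*(BX)}^{I-\mathrm{comp}}$. This reduces the claim to the corresponding statement about objects of $\cD_{H^*(BX)}$: namely, that $N \in \cD_{H^*(BX)}$ lies in $\cD_{H^*(BX)}^{I-\mathrm{comp}}$ if and only if $H_* N$ is $L_0^I$-complete. That is precisely the content of \Cref{rem:homology_check} (since $H^*(BX) \cong \Q[x_1,\ldots,x_n]$ is Noetherian and $I$ is generated by a regular sequence, so the relevant local homology spectral sequence applies).

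Putting the three steps together, we obtain the chain of equivalences: $A \in \Mod_{C^*(BX;\Q)}^{I-\mathrm{comp}}$ iff $\theta(A) \in \cD_{H^*(BX)}^{I-\mathrm{comp}}$ iff $H_*\theta(A)$ is $L_0^I$-complete iff $\pi_* A$ is $L_0^I$-complete, i.e.\ $\pi_* A \in \cA$. No real obstacle arises: the only nontrivial input is the homological characterization of the complete subcategory (Remark \ref{rem:homology_check}), and the monoidality of $\theta$ (needed to transport $I$ and hence the completion operation correctly), both of which are already in hand. An alternative route, closer to \cite[Corollary 3.14]{BarthelFrankl2015Completed}, would use the local homology spectral sequence directly for the ring spectrum $C^*(BX;\Q)$, but via $\theta$ this reduces to the algebraic case, so the argument above is the most economical.
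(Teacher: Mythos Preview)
Your argument is correct. The paper does not spell out a proof but simply points to \cite[Corollary 3.14]{BarthelFrankl2015Completed}, whose method is to run the Greenlees--May local homology spectral sequence directly for the ring spectrum $C^*(BX;\Q)$: one uses $L_s^I(\pi_*A)\Rightarrow \pi_*\Lambda^I A$ together with the fact that the higher derived functors $L_s^I$ vanish on $L_0^I$-complete modules to see that $A\to\Lambda^I A$ is an equivalence precisely when $\pi_*A$ is $L_0^I$-complete. You instead transport the question through the formality equivalence $\theta$ to $\cD_{H^*(BX)}$ and then invoke \Cref{rem:homology_check}, which packages exactly that spectral-sequence argument on the dg side. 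The two routes are essentially the same computation viewed through $\theta$; your version has the advantage of reusing results already recorded in the paper, while the paper's citation avoids appealing to $\theta$ and \Cref{lem:completion_compare} (so the lemma would stand on its own even without the formality of $C^*(BX;\Q)$).
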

Combining the previous two results we deduce the following.
\begin{cor}
  If $M,N \in \cC$, then
  \[
\widehat\Ext_{\cA}(\pi_*^{\cC}M,\pi^{\cC}_*N) \simeq \Ext_{H^*(BX)}(\pi^{\cC}_*M,\pi^{\cC}_*N).
  \]
\end{cor}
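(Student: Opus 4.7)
The plan is to combine \Cref{prop:ext} with \Cref{lem:lcompleteness} after observing that the homotopy groups $\pi_*^{\cC}M$ and $\pi_*^{\cC}N$ naturally live in $\cA$. The whole thing is essentially a bookkeeping argument, so there should be no real obstacle.

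First, I would use the symmetric monoidal equivalence $\Loc_{H\Q}(BX) \simeq_{\otimes} \Mod_{C^*(BX;\Q)}^{I-\mathrm{comp}}$ of \Cref{cor:local_systems_cochains} to translate the problem. Under this equivalence, the functor $\Hom_{\cC}(\unit,-)$ corresponds to the inclusion into $\Mod_{C^*(BX;\Q)}$ (via the forgetful functor from $I$-complete modules), because the monoidal unit of $\cC$ corresponds to $C^*(BX;\Q)$ itself. In particular, for any $M \in \cC$ the mapping spectrum $\Hom_{\cC}(\unit,M)$ is naturally a $C^*(BX;\Q)$-module lying in $\Mod_{C^*(BX;\Q)}^{I-\mathrm{comp}}$.

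Next, I would apply \Cref{lem:lcompleteness}: since $\Hom_{\cC}(\unit,M) \in \Mod_{C^*(BX;\Q)}^{I-\mathrm{comp}}$, its homotopy groups $\pi^{\cC}_* M = \pi_* \Hom_{\cC}(\unit,M)$ lie in $\cA = \Mod_{H^*(BX)}^{I-\mathrm{comp}}$, and similarly for $N$. With both $\pi_*^{\cC}M$ and $\pi_*^{\cC}N$ identified as objects of $\cA$, the identification of $\widehat{\operatorname{Ext}}$ with ordinary $\operatorname{Ext}$ is then immediate from \Cref{prop:ext}, which in turn rests on the fact that projective resolutions in $\cA$ arise by applying $L_0^I$ to flat $H^*(BX)$-resolutions and that $L_0^I$ is left adjoint to the inclusion $\cA \hookrightarrow \Mod_{H^*(BX)}$, so that $\Hom_{\cA}(L_0^I F, S) \cong \Hom_{H^*(BX)}(F, S)$ for any $I$-complete $S$.

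Putting these together gives the claimed isomorphism, with the chain of natural identifications
\[
\widehat{\operatorname{Ext}}_{\cA}(\pi_*^{\cC}M,\pi^{\cC}_*N) \;\cong\; \operatorname{Ext}_{H^*(BX)}(\pi^{\cC}_*M,\pi^{\cC}_*N).
\]
The one thing worth being slightly careful about is that the equivalence of \Cref{cor:local_systems_cochains} really does identify $\Hom_{\cC}(\unit,-)$ with the forgetful functor to $C^*(BX;\Q)$-modules; this is built into how that equivalence is set up (it is the lax symmetric monoidal right adjoint corresponding to the unit), so no further work is required. I do not anticipate a genuine difficulty here — this corollary is essentially the computational payoff of the preceding two results.
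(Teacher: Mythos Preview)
Your proposal is correct and follows essentially the same path as the paper: use \Cref{cor:local_systems_cochains} to see that $\Hom_{\cC}(\unit,M)$ lies in $\Mod_{C^*(BX;\Q)}^{I-\mathrm{comp}}$, invoke \Cref{lem:lcompleteness} to place $\pi_*^{\cC}M$ and $\pi_*^{\cC}N$ in $\cA$, and then apply \Cref{prop:ext}. Your additional remarks about why the equivalence identifies $\Hom_{\cC}(\unit,-)$ with the forgetful functor are accurate elaborations, but the argument is otherwise identical to the paper's.
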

\begin{proof}
  By \Cref{cor:local_systems_cochains} we have $\Hom_{\cC}(\unit,M) \in \Mod_{C^*(BX;\Q)}^{I-\mathrm{comp}}$, and so by \Cref{lem:lcompleteness} we deduce $\pi^{\cC}_*(M) \cong \pi_*(\Hom_{\cC}(\unit,M)) \in \cA$. The result follows from \Cref{prop:ext}.
\end{proof}

We now construct the Adams spectral sequence.
\begin{proof}[Proof of \Cref{thm:adss}]
  We recall that there is an equivalence of categories $\cC \simeq_{\otimes} \Mod_{C^*(BX;\Q)}^{I-\mathrm{comp}}$, given by sending $M \in \cC$ to $\Hom_{\cC}(\unit,M) \in \Mod_{C^*(BX;\Q)}^{I-\mathrm{comp}}$. Under this then, we have
  \[
  \begin{split}
\pi_{t-s}(\Hom_{\cC}(M,N)) &\cong \pi_{t-s}\Hom_{C^*(BX;\Q)^{I-\mathrm{comp}}}(\Hom_{\cC}(\unit,M),\Hom_{\cC}(\unit,N)) \\
& \cong \pi_{t-s}\Hom_{C^*(BX;\Q)}(\Hom_{\cC}(\unit,M),\Hom_{\cC}(\unit,N))
\end{split}
  \]
  where the last step uses that $\Mod_{C^*(BX;\Q)}^{I-\mathrm{comp}}\to \Mod_{C^*(BX;\Q)}$ is fully-faithful.

  The universal spectral sequence \cite[Theorem IV.4.1]{ElmendorfKrizMellMay1997Rings} then takes the form
  \[
E_2^{s,t} \cong \Ext^{s,t}_{H^*(BX)}(\pi_*^{\cC}M,\pi_*^{\cC}N)\implies \pi_{t-s}\Hom_{\cC}(M,N).
  \]
  In general this spectral sequence is only conditionally convergent but in this case it is strongly convergent because $E_2^{s,t} = 0$ for $s>n$ since $H^*(BX)$ has projective dimension $n$. Along with \Cref{prop:ext}, this proves the theorem.
\end{proof}
Translating back into equivariant homotopy, we deduce the following.
\begin{cor}
  Suppose $G$ is a compact Lie group, and $K$ a closed subgroup such that the Weyl group $W_GK$ is connected. For $X,Y \in \Sp_{G,\Q}^{\langle K \rangle}$, there is a natural, conditionally and strongly convergent, spectral sequence of $H^*(B(W_GK))$-modules with
  \[
E_2^{s,t} \cong \Ext^{s,t}_{H^*(B(W_GK))}(\pi_*^{W_GK}(X^K),\pi_*^{W_GK}(Y^K)) \implies [X^K,Y^K]_{t-s}^{W_GK}.
  \]
  Moreover, $E_2^{s,t} = 0$ when $s > \dim(W_GK)$.
\end{cor}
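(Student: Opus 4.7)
The proof is a direct translation of \Cref{thm:adss} across the chain of equivalences established earlier in the paper, together with some bookkeeping of what the unit object and mapping spaces correspond to on each side.

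First I would set $\cC = \Loc_{H\Q}(B(W_GK))$ and combine \Cref{thm:gspectraatk} with \Cref{prop:cofree_global} to obtain the symmetric monoidal chain
\[
\Sp_{G,\Q}^{\langle K \rangle} \;\simeq_{\otimes}\; \Sp_{W_GK,\Q}^{\mathrm{cofree}} \;\simeq_{\otimes}\; \Fun(B(W_GK),\Mod_{H\Q}) \;=\; \cC,
\]
where the first equivalence sends $X \mapsto X^K$ (with its residual $W_GK$-action), and the second forgets down to an underlying spectrum with $W_GK$-action. Because $W_GK$ is a connected compact Lie group, it is in particular a connected finite loop space and $H^*(B(W_GK);\Q)$ is a polynomial ring, so \Cref{thm:adss} applies to $\cC$.

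Next I would verify the identifications of homotopy groups and hom-sets. Since the equivalences above are symmetric monoidal, the unit $\unit \in \cC$ corresponds to the unit of $\Sp_{W_GK,\Q}^{\mathrm{cofree}}$. For a cofree $W_GK$-spectrum $Z$, one has $\Hom_{\Sp_{W_GK,\Q}^{\mathrm{cofree}}}(\unit,Z) \simeq Z^{W_GK}$ (categorical and homotopy fixed points agree on cofree objects, which is the content of \Cref{prop:borel_cofree}), so
\[
\pi^{\cC}_*(X) \;\cong\; \pi_*\Hom_{\cC}(\unit,X) \;\cong\; \pi_*^{W_GK}(X^K).
\]
Similarly,
\[
\pi_{t-s}\Hom_{\cC}(X,Y) \;\cong\; [X^K,Y^K]^{W_GK}_{t-s},
\]
where the right-hand side denotes morphisms in $\Sp_{W_GK,\Q}^{\mathrm{cofree}}$ (equivalently, morphisms in $\Sp_{W_GK,\Q}$, since both $X^K$ and $Y^K$ are cofree).

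Feeding these identifications into \Cref{thm:adss} yields the desired conditionally and strongly convergent spectral sequence with $E_2$-page
\[
\Ext^{s,t}_{H^*(B(W_GK))}(\pi_*^{W_GK}(X^K),\pi_*^{W_GK}(Y^K)) \implies [X^K,Y^K]^{W_GK}_{t-s}.
\]
Finally, the vanishing statement follows because the rank of $W_GK$ as a finite loop space (the number of polynomial generators of $H^*(B(W_GK);\Q)$) equals its Lie-theoretic rank, which is bounded above by $\dim(W_GK)$; hence $s > \dim(W_GK)$ forces $s > \rank(W_GK)$ and $E_2^{s,t}=0$ by \Cref{thm:adss}. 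The only nontrivial point is tracking the unit object through the composite equivalence and observing that cofreeness makes categorical and homotopy fixed points coincide; everything else is a formal consequence of what has already been proved.
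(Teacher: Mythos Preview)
Your proposal is correct and is precisely the translation the paper has in mind; the paper does not spell out a proof but simply says ``Translating back into equivariant homotopy, we deduce the following,'' and your argument unpacks exactly that translation using \Cref{thm:gspectraatk}, \Cref{prop:cofree_global}, and \Cref{thm:adss}. Your handling of the vanishing line (noting $\rank(W_GK)\le\dim(W_GK)$) is also the right way to reconcile the bound in \Cref{thm:adss} with the one stated here.
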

When $K = \{ e \}$ is the trivial group we recover the connected case of \cite[Theorem 10.6]{pol_williamson}.

Using that there is an equivalence $\Sp_{G,\langle K \rangle, \Q} \simeq \Mod_{C^*(BX;\Q)}^{I-\mathrm{tors}}$, a similar argument gives the following.
\begin{prop}
	Suppose $G$ is a compact Lie group, and $K$ a closed subgroup such that the Weyl group $W_GK$ is connected. For $X,Y \in \Sp_{G,\langle K \rangle, \Q}$, there is a natural, conditionally and strongly convergent, spectral sequence of $H^*(B(W_GK))$-modules with
  \[
E_2^{s,t} \cong \Ext^{s,t}_{H^*(B(W_GK))}(\pi_*^{W_GK}(X^K),\pi_*^{W_GK}(Y^K)) \implies [X^K,Y^K]_{t-s}
^{W_GK}.
  \]
  Moreover, $E_2^{s,t} = 0$ when $s > \dim(W_GK)$.
\end{prop}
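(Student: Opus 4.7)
The plan is to mimic the proof of \Cref{thm:adss}, transplanted from the complete to the torsion setting. The first step is to invoke the symmetric monoidal equivalence
\[
\Sp_{G,\langle K \rangle, \Q} \simeq_{\otimes} \Mod_{C^*(B(W_GK);\Q)}^{I-\mathrm{tors}},
\]
which is obtained by combining \Cref{cor:gspectraatkfree} with the Greenlees--Shipley equivalence (compare \Cref{thm:rational_models}) and the $C^*$-vs-$H^*$ comparison of \Cref{lem:completion_compare}. Setting $A = C^*(B(W_GK);\Q)$, let $M, N \in \Mod_A^{I-\mathrm{tors}}$ denote the modules associated to $X, Y$ under this equivalence. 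Since $\Mod_A^{I-\mathrm{tors}}$ is a full localizing subcategory of $\Mod_A$, one immediately obtains
\[
[X^K, Y^K]_{t-s}^{W_GK} \cong \pi_{t-s}\Hom_{\Mod_A}(M, N).
\]

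Next, I would apply the universal coefficient spectral sequence of \cite[Theorem IV.4.1]{ElmendorfKrizMellMay1997Rings} to this Hom-spectrum, producing a conditionally convergent spectral sequence
\[
E_2^{s,t} = \Ext^{s,t}_{\pi_*A}(\pi_*M, \pi_*N) \Rightarrow \pi_{t-s}\Hom_{\Mod_A}(M, N).
\]
Identifying $\pi_*A$ with $H^*(B(W_GK);\Q)$ as graded rings (up to regrading) and, via the equivalence, $\pi_*M$ with $\pi_*^{W_GK}(X^K)$ and $\pi_*N$ with $\pi_*^{W_GK}(Y^K)$, this rewrites as the claimed $E_2$-page.

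For the vanishing line and the upgrade to strong convergence, the key observation is that $H^*(B(W_GK);\Q) \cong \Q[x_1, \ldots, x_r]$ with $r = \rank(W_GK) \le \dim(W_GK)$, so this polynomial ring has global dimension $r$. Consequently $\Ext^{s,t}_{H^*(B(W_GK);\Q)}(-,-) = 0$ for $s > r$, which simultaneously gives $E_2^{s,t} = 0$ for $s > \dim(W_GK)$ and promotes conditional to strong convergence via bounded filtration.

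The main technical point, which I expect to be the only real obstacle, is the identification $\pi_*M \cong \pi_*^{W_GK}(X^K)$. In the complete case of \Cref{thm:adss} this was transparent because $\pi^{\cC}_*(-) = \pi_*\Hom_{\cC}(\unit,-)$ manifestly matched homotopy fixed points for cofree spectra, whereas in the free/torsion case one must trace carefully through the Greenlees--Shipley equivalence (where the passage from a free spectrum to its associated module involves an Adams-isomorphism–type shift between genuine and homotopy fixed points). However, this is precisely the same identification already implicit in the setup of the equivalence $\Sp_{G,\langle K \rangle, \Q} \simeq \Mod_{A}^{I-\mathrm{tors}}$ invoked just before the proposition, so it should carry through without new difficulty.
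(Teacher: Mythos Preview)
Your proposal is correct and follows essentially the same route as the paper: the paper's own proof is the single sentence ``Using that there is an equivalence $\Sp_{G,\langle K \rangle, \Q} \simeq \Mod_{C^*(BX;\Q)}^{I-\mathrm{tors}}$, a similar argument gives the following,'' and your write-up is exactly that similar argument (pass to $\Mod_A^{I-\mathrm{tors}}$, use full faithfulness of the inclusion into $\Mod_A$, apply the universal coefficient spectral sequence, and invoke the finite global dimension of the polynomial ring for the vanishing line and strong convergence). Your flagged subtlety about identifying $\pi_*M$ with $\pi_*^{W_GK}(X^K)$ is real but is glossed over in the paper as well; it is absorbed into the equivalence stated just before the proposition.
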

When $K = \{ e \}$ is the trivial group we recover the spectral sequence of Greenlees and Shipley \cite[Theorem 6.1]{GreenleesShipley2011algebraic}.
\appendix
\section{Model categories and \texorpdfstring{$\infty$}{infty}-categories}\label{sec:appendix}
Throughout we work with $\infty$-categories as developed in \cite{ha}. Since much of the existing work on rational models has used model categories, here we present a very short summary of the relationship between model categories and $\infty$-categories. More details can be found in \cite[Section 5.1]{MathewNaumannNoel2017Nilpotence} or \cite[Section 1.3.4]{ha}, as well as \cite[Appendix A]{NikolausScholze2018topological}
\begin{defn}
  Let $\cC$ be a model category, and let $\cC^c$ denote the full subcategory of $\cC$ spanned by the cofibrant objects. The model category $\cC$ presents an $\infty$-category $\underline \cC$, as the $\infty$-categorical (or Dywer--Kan) localization $\underline \cC \coloneqq \cC^c[\cal{W}^{-1}]$, where $\cal{W}$ is the collection of weak equivalences in $\cC^c$.
\end{defn}
\begin{rem}
  If $\cC$ admits functorial factorizations, then we can equivalently define $\underline \cC$ using fibrant objects of $\cC$, of from the fibrant-cofibrant objects of $\cC$, see \cite[Remark 1.3.4.16]{ha}.
\end{rem}

Suppose that $F \colon \cC \leftrightarrows \cD \colon G$ is a Quillen pair, then by the universal property of localizations one obtains functors
\[
\begin{tikzcd}
\underline{F} \colon \underline \cC \arrow[r, shift left] & \underline{\cD} \colon \underline{G}  \arrow[l, shift left]
\end{tikzcd}
\]
between the underlying $\infty$-categories. 

The following result is \cite[Proposition 1.5.1]{MR3460765}
\begin{prop}[Hinich]\label{lem:hinich_adjoint}
  The pair $(\underline{F},\underline{G})$ form an adjoint pair of $\infty$-categories.
\end{prop}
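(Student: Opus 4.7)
The plan is to derive the $\infty$-categorical adjunction from the Quillen adjunction by combining three ingredients: Ken Brown's lemma to check that $F$ and $G$ descend to the localizations, the computation of mapping spaces in Dwyer--Kan localizations via derived mapping spaces in the model category, and a standard criterion for $\infty$-categorical adjunctions.

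First I would construct the underlying functors. By Ken Brown's lemma the left Quillen functor $F$ preserves weak equivalences between cofibrant objects, so the composite $\cC^c \xrightarrow{F} \cD \to \underline{\cD}$ inverts weak equivalences and descends to $\underline{F} \colon \underline{\cC} \to \underline{\cD}$ by the universal property of the Dwyer--Kan localization. Symmetrically, $G$ preserves weak equivalences between fibrant objects and induces $\underline{G} \colon \underline{\cD} \to \underline{\cC}$, using the identification of the localization with $\cD^f[\cW^{-1}]$ alluded to in the remark after the definition.

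Second, I would assemble the adjunction data. The unit $\eta \colon \mathrm{id}_{\cC} \to GF$ of the Quillen pair, restricted to $\cC^c$ and composed with a functorial fibrant replacement in $\cD$ (so that $GF(X)$ is replaced by $G$ of a fibrant replacement of $FX$, to which $\underline G$ genuinely corresponds), yields a natural transformation $\mathrm{id}_{\underline{\cC}} \Rightarrow \underline{G}\,\underline{F}$ in the functor $\infty$-category $\mathrm{Fun}(\underline{\cC},\underline{\cC})$; similarly for the counit. To certify that this is an $\infty$-categorical adjunction I would invoke the mapping-space criterion of \cite[Proposition 5.2.2.8]{Lurie2009Higher}: it suffices to show that, for each cofibrant $X \in \cC$ and each fibrant $Y \in \cD$, composition with $\eta$ gives an equivalence
\[
\mathrm{Map}_{\underline{\cD}}(\underline{F}(X),Y) \xrightarrow{\;\sim\;} \mathrm{Map}_{\underline{\cC}}(X, \underline{G}(Y)).
\]
Both sides can be identified with the Dwyer--Kan function complexes $\mathrm{Map}^h_{\cD}(FX,Y)$ and $\mathrm{Map}^h_{\cC}(X,GY)$ computed via (co)simplicial framings; the Quillen adjunction furnishes the required natural equivalence between these framings, because on fibrant--cofibrant objects it induces an equivalence on simplicial mapping spaces by a classical enrichment of Quillen's $[FX,Y]_{\cD} \cong [X,GY]_{\cC}$.

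The main obstacle is precisely the last identification: one has to pass from the elementary adjunction on homotopy classes to an equivalence of mapping \emph{spaces}, and then compare these mapping spaces with those in the Dwyer--Kan localization. In the simplicial model category case this is essentially automatic, but in general one must invoke the fact that the hammock localization is modeled by function complexes built from (co)simplicial resolutions, which is the most delicate technical step; a clean modern treatment is given in \cite{MR3460765}, whose argument I would follow.
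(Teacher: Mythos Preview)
Your outline is correct in spirit, but you should know that the paper does not actually prove this proposition at all: it simply records it as a citation to Hinich \cite[Proposition 1.5.1]{MR3460765} and moves on. So there is no ``paper's own proof'' to compare against beyond the bare reference, and your final sentence---deferring to \cite{MR3460765} for the delicate step---is exactly what the paper does from the outset.

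That said, your sketch is a reasonable summary of how the argument in Hinich's paper (and the closely related treatment by Mazel-Gee) actually runs: descend $F$ and $G$ using Ken Brown, identify mapping spaces in the Dwyer--Kan localization with derived mapping spaces computed via (co)simplicial framings, and verify the unit induces an equivalence on mapping spaces as in \cite[Proposition 5.2.2.8]{Lurie2009Higher}. The one place to be careful is the construction of the unit as a genuine natural transformation in $\Fun(\underline{\cC},\underline{\cC})$: composing the model-categorical unit with a fibrant replacement only gives you a zig-zag pointwise, and promoting this to a coherent natural transformation between $\infty$-functors is precisely the content that requires the machinery of \cite{MR3460765} (or an equivalent marked-simplicial-set argument). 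You correctly flag this as the main obstacle, so there is no gap, but it is worth being explicit that this step is not just ``delicate'' but is essentially the whole proof.
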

If $\cC$ is a symmetric monoidal model category, then $\underline{\cC}$ is a symmetric monoidal $\infty$-category \cite[Example 4.1.3.6]{ha}. Moreover, if $F$ is a symmetric monoidal left Quillen functor, then $\underline{F}$ is a symmetric monoidal functor, and because $\underline{G}$ is right adjoint to $\underline{F}$ by \Cref{lem:hinich_adjoint}, $\underline{G}$ is lax symmetric monoidal by \cite[Corollary 7.3.2.7]{ha}.
\bibliography{rational}{}
\bibliographystyle{amsalpha}
\end{document}